\documentclass[11pt,a4paper]{article}

\usepackage{authblk}
\usepackage[margin=1in]{geometry}
\usepackage{amsmath,amsthm,amssymb,amsfonts,mathrsfs,mathtools}
\usepackage{bm}
\usepackage{enumitem}
\usepackage{microtype}
\usepackage{cite}
\usepackage[hidelinks]{hyperref}
\renewenvironment{abstract}
{\small\noindent\textbf{\abstractname.}}
{\par}
\allowdisplaybreaks
\numberwithin{equation}{section}

\newtheorem{theorem}{Theorem}[section]
\newtheorem{proposition}[theorem]{Proposition}
\newtheorem{lemma}[theorem]{Lemma}
\newtheorem{corollary}[theorem]{Corollary}
\theoremstyle{definition}
\newtheorem{definition}[theorem]{Definition}
\newtheorem{example}[theorem]{Example}
\theoremstyle{remark}
\newtheorem{remark}[theorem]{Remark}

\newcommand{\Bcal}{\mathcal B}
\newcommand{\N}{\mathbb N}
\newcommand{\Rplus}{[0,\infty)}
\newcommand{\U}{\mathfrak U}
\newcommand{\Ucom}{\mathfrak U_{\mathrm{com}}}

\newcommand{\supp}{\operatorname{supp}}
\newcommand{\essinf}{\operatorname*{ess\,inf}}
\newcommand{\esssup}{\operatorname*{ess\,sup}}
\newcommand{\diag}{\operatorname{diag}}
\newcommand{\sech}{\operatorname{sech}}
\newcommand{\Id}{I}
\newcommand{\dd}{\,\mathrm d}
\newcommand{\e}{\mathrm e}

\begin{document}

\title{Universal Nonuniform Sampling of Positive Operator Orbits
via Complete Stein--Pick Criteria and Density}

\author{Jian Wu\thanks{Corresponding author: \texttt{xingxingwu2022@163.com}}}
\affil{}
\renewcommand*{\Affilfont}{\small\itshape}
\date{}

\maketitle

\begin{abstract}
We study nonuniform sampling for positive operator systems. Given a locally
finite set \(T\subset[0,\infty)\), we determine when exact observability of
\(\{CA^n\}_{n\in\mathbb N}\) implies exact observability of
\(\{CA^t\}_{t\in T}\) for every positive operator \(A\) and every
observation operator \(C\). Using the Stein identity and a double spectral
representation of the sampled observability Gramian, we show that finite
spectral models suffice to test universal preservation for arbitrary
positive systems. This yields a characterization in terms of radial
Stein--Pick cones at all matrix levels, with constants independent of the
dimension.

The scalar part of this criterion yields \(0\in T\) and
\(N_T(R)\asymp R\). These conditions are also sufficient in the commuting
case \([A,C^*C]=0\), but not in general; a clustered counterexample with
\(N_T(R)\asymp R\) shows that spectral interactions create an additional
obstruction. This obstruction already occurs for a singly generated
positive diagonal Carleson frame and, more generally, for any prescribed
finite number of generating orbits. We nevertheless obtain two positive
results:
if the natural density exists, then \(T\) is universal exactly when
\(0\in T\) and \(0<d(T)<\infty\), while \(0\in T\), positive lower
Beurling density, and finite upper Beurling density give another sufficient
condition. We also develop a quantitative perturbation theory relative to
regular lattices. In particular, the cumulative condition
\(\sum_{k=0}^{K}|t_k-Nk|^2=o(K^2)\) forces the associated quadratic
comparison function to vanish at the spectral boundary. This identifies the
square root as a critical scale for comparison with a fixed reference
lattice and, for regular power perturbations, reveals the universal
sampling threshold. Finally, the theory applies to finite and
countable families of operator orbits generated by positive operators.
\end{abstract}

\medskip
\noindent\textbf{Keywords.}
Dynamical sampling; exact observability; positive operators;
Stein--Pick cones; Beurling density.

\medskip
\noindent\textbf{Mathematics Subject Classification (2020).}
42C15; 47B15; 93B07; 94A20.

\section{Introduction}

Dynamical sampling studies the stable recovery of an evolving state from
measurements taken at different times. If \(A\in\mathcal B(H)\) is an
evolution operator and \(C\in\mathcal B(H,Y)\) is an observation operator,
then exact observability at the nonnegative integers means that there exist
constants \(0<a\leq b<\infty\) such that
\[
a\lVert x\rVert^2
\leq
\sum_{n\geq0}\lVert CA^n x\rVert^2
\leq
b\lVert x\rVert^2,
\qquad x\in H.
\]
Equivalently, \(\{CA^n\}_{n\geq0}\) forms a \(g\)-frame for \(H\).
Dynamical sampling and operator orbit frames have been studied from the
viewpoints of spectral theory, geometry, and frame theory; see, for example,
\cite{AldroubiEtAlDynamical,AldroubiDavisKrishtal,AldroubiEtAlNormal,
CabrelliEtAlFiniteIndex,AguileraEtAlShift,
ChristensenHasannasabIterated,BaileyEtAlDynamical,
ChristensenHasannasabRashidi,ChristensenHasannasabPhilipp,
PhilippBessel}.
Related questions for multiple or commuting operator orbits were considered
in \cite{AguileraEtAlTwoOperators}, while continuous powers of normal
operators and related sampling problems were studied in
\cite{AldroubiHuangPetrosyan}. Recent work has also developed perturbation and functional calculus
methods for sampled operator orbits; see
\cite{KrishtalMillerKadec,KrishtalMashreghiMiller}.
Exact observability and admissibility of observation operators for
infinite dimensional systems have been studied extensively, particularly
in the semigroup setting; see, for example,
\cite{WeissAdmissible,RussellWeissExact,PartingtonPott,
XuLiuYung,HaakOuhabaz,TucsnakWeiss}.
For background on \(g\)-frames, we refer to \cite{SunGFrames}.

For a recent survey of dynamical sampling and its connections with frame
theory, operator theory, and related reconstruction problems, see
\cite{AldroubiSurvey}. In the positive diagonal setting, a Carleson frame is a frame
of the form \(\{D^n b\}_{n\geq0}\), where
\(De_k=\lambda_k e_k\) for an orthonormal basis \((e_k)\),
\(0<\lambda_k<1\), and the eigenvalue sequence satisfies
Carleson's interpolation condition
\[
\inf_k\prod_{j\neq k}
\left|\frac{\lambda_k-\lambda_j}{1-\lambda_k\lambda_j}\right|>0;
\]
see \cite[Theorem~1.1]{ChristensenEtAlMystery}.

A particularly relevant development concerns the remarkable redundancy of Carleson frames. Christensen, Hasannasab, Philipp, and Stoeva
\cite[Theorem~2.1]{ChristensenEtAlMystery} showed, in particular, that
for a Carleson frame whose diagonal eigenvalues lie in \((0,1)\),
selecting every \(N\)th element again produces a frame for every
\(N\in\mathbb N^+\).
They further raised the question of whether this stability persists under irregular selections within consecutive blocks.
Krishtal and Miller \cite{KrishtalMillerDemystifying} subsequently proved this conjecture under the assumption that the generating operator has
positive spectrum: if
\(\{A^n f\}_{n\geq0}\) is a Carleson frame, then every subsequence of the
form
\(\{A^{Nk+j_k}f\}_{k\geq0},
\quad 0\leq j_k<N,\)
is again a frame. These results show that positive operator orbits can
remain stable under substantially more flexible time selection than
regular subsampling. They naturally lead to the question of which
nonuniform time sets preserve the frame or observability property beyond
the Carleson setting and without being tied to a fixed block structure.

The problem studied here reverses the usual order of quantifiers.
Rather than fixing a system and determining suitable sampling times, we fix
a locally finite set \(T\subset[0,\infty)\) and ask whether it preserves
exact observability for every positive system. More precisely, throughout
the paper \(H\) and \(Y\) are separable complex Hilbert spaces,
\(A\in\mathcal B(H)\) is positive, and \(C\in\mathcal B(H,Y)\). We ask
when
\[
\{CA^n\}_{n\in\mathbb N}
\ \text{is exactly observable}
\quad\Longrightarrow\quad
\{CA^t\}_{t\in T}
\ \text{is exactly observable}
\]
for every pair \((A,C)\). Here \(A^t\) is defined by continuous functional
calculus for \(t>0\), and \(A^0=I\). We denote the class of all such time
sets by \(\mathfrak U\).

Positivity plays an essential role in this problem. If a positive operator
occurs in an exactly observable integer time system, then it is a
contraction and \(A^nx\to0\) for every \(x\in H\). Moreover, positivity
provides canonical real powers and implies that replacing
\(\mathbb N\) by \(N\mathbb N\), \(N\in\mathbb N^+\), does not change
exact observability. This suggests that universal sampling should allow
substantially more freedom than bounded perturbations of the integers.
At the same time, the universal quantifier over all positive systems makes
the problem more rigid than a fixed orbit sampling problem.
A complementary recent direction concerns genuinely irregular time
sampling for fixed Carleson systems. In their recent survey,
Aldroubi, Cabrelli, Krishtal, and Molter \cite{AldroubiSurvey}
conjectured that, for a singly generated positive diagonal Carleson
frame, the M\"untz--Sz\'asz divergence condition together with the
Bessel property should imply preservation of the frame property.
Gallardo-Guti\'errez and Partington
\cite{GallardoPartingtonOperatorOrbits} subsequently disproved this
conjecture. In the notation of the present paper, their setting
corresponds to \(A=D\) and
\(C:H\to\mathbb C\), \(Cx=\langle x,b\rangle\).
Their counterexamples, however, already fail the necessary scalar lower
bound
\[
\inf_k
(1-\mu_k^2)
\sum_{t\in T}\mu_k^{2t}
>0.
\]

This suggests a stronger obstruction question: can frame preservation
still fail when the scalar upper and lower bounds hold uniformly at every
spectral point? Our results give an affirmative answer. The scalar
condition is equivalent to \(N_T(R)\asymp R\), yet the clustered time set
constructed in Section~3 satisfies this condition and fails universal
sampling. Moreover, Section~6 shows that the same time set can be
realized by a singly generated positive diagonal Carleson frame whose
sampled orbit is Bessel but not a frame, and, more generally, by any
prescribed finite number of generating orbits. Thus even the full scalar
two-sided condition does not capture the cancellation between different
spectral scales that obstructs the lower frame bound.

Our starting point is the integer time observability Gramian
\(
G_{\mathbb N}
=
\sum_{n\geq0}A^nC^*CA^n
\)
and the Stein identity
\(
C^*C
=
G_{\mathbb N}-AG_{\mathbb N}A.
\)
Using the spectral representation of \(A\), we express the sampled
observability Gramian through the scalar function

\(
\Phi_T(z)
=
(1-z)\sum_{t\in T}z^t
\)
for \(0\leq z<1\). For operators with finite spectrum, the Stein identity
converts exact integer time observability into a matrix positivity
condition. More precisely, after prescribing the integer observability
Gramian \(S\), the admissible observation operators are encoded exactly by
\(
D_\lambda(S)\geq0,
\)
while sampling at \(T\) replaces \(S\) by
\(
M_T^\lambda(S)
=
[\Phi_T(\lambda_i\lambda_j)S_{ij}].
\)
This leads naturally to radial Stein--Pick cones.

A central point of the paper is that these finite spectral models are not
merely necessary tests. Uniform preservation on them, simultaneously at
all matrix sizes and spectral multiplicities, is already sufficient for
universal preservation on arbitrary separable Hilbert spaces. Thus we prove
that \(T\in\mathfrak U\) if and only if the maps
\(S\mapsto M_T^\lambda(S)\) preserve uniform upper and lower bounds on all
radial Stein--Pick cones, with constants independent of the matrix level.

The scalar part of this characterization has a simple geometric
interpretation. If
\(
N_T(R)
=
\#\bigl(T\cap[0,R]\bigr),
\)
then every \(T\in\mathfrak U\) satisfies
\(
0\in T,
\
N_T(R)\asymp R.
\)
These conditions are in fact sufficient when
\([A,C^*C]=0\). Thus, if \(\mathfrak U_{\mathrm{com}}\) denotes the
universal class under this commutation assumption, then
\(
\mathfrak U_{\mathrm{com}}
=
\left\{
T:
0\in T,\
N_T(R)\asymp R
\right\}.
\)
The situation changes in the general case. We construct a locally finite
self-similar clustered set satisfying \(N_T(R)\asymp R\) that nevertheless
fails to be universal. Hence
\(
\mathfrak U
\subsetneq
\mathfrak U_{\mathrm{com}}.
\)
The counterexample shows that linear counting controls the scalar spectral
behavior but does not capture all interactions between different spectral
components.

We then give two conditions under which the scalar information does extend
to the general setting. For block diagonal Carleson frames whose generating operator has positive
spectrum, a recent result of Krishtal and Miller
\cite{KrishtalMillerBlock} gives a complete characterization of frame
preservation under the assumption that the natural density exists. Here the same density
phenomenon is obtained uniformly for arbitrary positive exactly observable
systems, without diagonal or Carleson assumptions.

If the natural density
\(
d(T)
=
\lim_{R\to\infty}\frac{N_T(R)}{R}
\)
exists, then
\(
T\in\mathfrak U
\quad\Longleftrightarrow\quad
0\in T\)
and
\(
0<d(T)<\infty.
\)
A different sufficient condition is expressed in terms of Beurling
density. Positive lower and finite upper Beurling densities are equivalent
to uniform upper and lower point counts on intervals of one fixed length
sufficiently far from the origin. If this condition holds and \(0\in T\),
then \(T\in\mathfrak U\). These two mechanisms are independent. Thus
neither natural density nor local distribution at a fixed scale alone
describes the whole universal class.

We also develop a quantitative theory for perturbations of regular
lattices. Let
\(
T=\{t_k:k\in\mathbb N\}
\)
with \(t_0=0\), and fix \(N\in\mathbb N^+\). If
\(
\sum_{k=0}^{K}|t_k-Nk|^2=o(K^2),
\)
then the quadratic comparison function
\(
Q_{T,N}(r)
=
\sum_{k\geq0}|r^{t_k}-r^{Nk}|^2
\)
satisfies \(Q_{T,N}(r)\to0\) as \(r\uparrow1\). Although the resulting
universality also follows from the natural density theorem, this stronger
estimate quantifies stability relative to the prescribed lattice
\(N\mathbb N\). For the model \(t_k=Nk+ck^\gamma\), comparison with the
reference lattice changes at the square root scale, whereas universal
sampling holds precisely for \(0<\gamma\leq1\). We further show
that comparison with the fixed lattice may remain stable even when
\(|t_k-Nk|/\sqrt{k}\) is unbounded, provided that the large perturbations
are sufficiently sparse. These results distinguish universal sampling
from quantitative stability relative to a prescribed reference lattice.

Finally, the observation formulation applies directly to finite and
countable families of positive operator orbits. If
\(\mathcal F=(f_i)_{i\in I}\) and
\(
C_{\mathcal F}x
=
\bigl(\langle x,f_i\rangle\bigr)_{i\in I},
\)
then the frame property of
\(
\{A^nf_i:n\in\mathbb N,\ i\in I\}
\)
is exactly the integer time observability property of
\((A,C_{\mathcal F})\). Consequently, the characterization of
\(\mathfrak U\) yields universal nonuniform sampling results for finite
and countable positive operator orbit frames.

The paper is organized as follows. Section~2 develops the Stein
representation and proves the complete characterization by radial
Stein--Pick cones. Section~3 derives the scalar counting condition,
characterizes the commuting class, and constructs the counterexample
showing that linear counting is not sufficient in general. Section~4
establishes the natural density characterization and the sufficient
condition based on point counts on intervals of a fixed length. Section~5 studies
perturbations of regular lattices and their quantitative stability.
Section~6 applies the theory to finite and countable operator orbit
frames.

\section{Complete Stein--Pick characterization of universal sampling}
This section develops the operator theoretic framework for universal
nonuniform sampling. We first express sampled observability through the observability Gramian and the Stein identity, and then show that finite spectral models capture the universal problem completely. This leads to the characterization of universal time sets by complete bound preservation on radial Stein--Pick cones.

We set $\N=\{0,1,2,\ldots\}$ and
$\mathbb N^+=\{1,2,\ldots\}$. Since no finite subset $T\subset\Rplus$ can be universal, whenever an increasing enumeration is used below we may assume that $T$ is infinite and write
\(T=\{\tau_k:k\in\N\},
\quad
0\leq\tau_0<\tau_1<\cdots .\)
Local finiteness then implies that $\tau_k\to\infty$.

We call the indexed operator family
\(\{CA^t\}_{t\in T}\)
the $T$-sampled observation family for $(A,C)$, or simply the
$T$-system for $(A,C)$. When the pair $(A,C)$ is clear from the context,
we simply call it the $T$-system. When $T=\N$, we refer to it as the
\(\N\)-system. Whenever the $T$-system is Bessel, its analysis
operator is
\[
\Gamma_T:H\longrightarrow\ell^2(\N;Y),
\qquad
\Gamma_Tx=\{CA^{\tau_k}x\}_{k\geq0}.
\]

The \(T\)-system is said to be exactly observable if it is a
\(g\)-frame for \(H\).

The following observation specializes
\cite[Lemma~3.1 and Theorem~3.3(i)]{CabrelliEtAlFiniteIndex}
to positive operators; we include the short proof for completeness.

\begin{lemma}
\label{lem:auto-contraction}
Suppose that the $\N$-system is exactly observable. Then
$0\leq A\leq\Id$ and
$A^n x\rightarrow0$
for every $x\in H$. Consequently, if $E_A$ denotes the spectral
measure of $A$, then $E_A(\{1\})=0$.
\end{lemma}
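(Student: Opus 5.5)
The plan is to work entirely with the integer-time Gramian \(G=G_{\N}=\sum_{n\ge0}A^nC^*CA^n\), which the exact observability hypothesis places between \(a\Id\) and \(b\Id\). First we show that \(\lVert A\rVert\le1\), and hence \(0\le A\le\Id\), since \(A\ge0\) is given. Suppose instead that some \(\lambda>1\) lies in \(\sigma(A)\). Let \(x\) be a nonzero vector in the range of \(E_A([\lambda-\varepsilon,\infty))\), with \(\lambda-\varepsilon>1\). Then \(\lVert A^n x\rVert\ge(\lambda-\varepsilon)^n\lVert x\rVert\). Applying the lower bound to \(A^m x\) and the upper bound to \(x\), and using the shift identity \(\sum_{n\ge0}\lVert CA^{n}A^mx\rVert^2=\sum_{n\ge m}\lVert CA^n x\rVert^2\), we obtain
\[
a(\lambda-\varepsilon)^{2m}\lVert x\rVert^2\le a\lVert A^mx\rVert^2\le\sum_{n\ge m}\lVert CA^nx\rVert^2\le b\lVert x\rVert^2
\]
for every \(m\). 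Letting \(m\to\infty\) gives a contradiction.

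Next we prove that \(A^n x\to0\). The same shift identity yields
\[
a\lVert A^m x\rVert^2\le\sum_{n\ge m}\lVert CA^nx\rVert^2 .
\]
The right-hand side is the tail of a convergent series, because the upper Bessel bound gives \(\sum_n\lVert CA^nx\rVert^2\le b\lVert x\rVert^2<\infty\). The tail therefore tends to \(0\), and so \(\lVert A^mx\rVert\to0\). We will handle this step first, since it also yields the contraction property above.

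For the spectral statement, let \(x\in\operatorname{ran}E_A(\{1\})\). Then \(Ax=x\), so \(A^nx=x\) for all \(n\), and \(A^nx\to0\) forces \(x=0\). Hence \(E_A(\{1\})=0\).

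There is no real obstacle here. The only point needing care is the shift identity. It uses \(A^nA^m=A^{n+m}\), applies the lower frame bound to the vector \(A^mx\) rather than to \(x\), and applies boundedness of the tail sums. All of this is elementary.
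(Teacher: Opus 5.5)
Your proof is correct. For the strong-stability part it is the paper's argument; for the contraction part it takes a different route.

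\textbf{Contraction.} The paper forms $S=G_{\N}$ as a strong limit of partial sums and uses the Stein identity $S-ASA=C^*C\geq0$. This shows that $R=S^{1/2}AS^{-1/2}$ satisfies $R^*R=\Id-S^{-1/2}C^*CS^{-1/2}\leq\Id$. Hence $r(A)=r(R)\leq1$, and positivity gives $\lVert A\rVert=r(A)\leq1$.

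You instead apply the lower bound to $A^mx$ and the upper bound to $x$. That gives the power bound $\sup_m\lVert A^m\rVert\leq\sqrt{b/a}$. You then rule out spectrum above $1$ using the spectral subspace $E_A([\lambda-\varepsilon,\infty))$; equivalently, $\lVert A^m\rVert=\lVert A\rVert^m$ for positive $A$.

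\textbf{Strong stability.} Here the two arguments coincide. The paper's telescoping identity $S_m=S-A^{m+1}SA^{m+1}$ is your shift identity in operator form, and $\langle SA^{m+1}x,A^{m+1}x\rangle=\langle (S-S_m)x,x\rangle$ is exactly your tail $\sum_{n>m}\lVert CA^nx\rVert^2$.

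\textbf{What each route gives.} Yours is more elementary: no operator square roots and no similarity. Your power bound holds for any bounded $A$, with positivity used only in the final step.

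The paper's route yields the stronger structural fact that $A$ is similar, via $G_{\N}^{1/2}$, to a contraction. It also introduces the Stein identity, which the paper reuses immediately: in the remark characterizing which positive invertible $S$ arise as integer Gramians, and in the approximation step of Theorem~\ref{thm:complete-characterization}. There the condition $\lVert G_{\N}^{1/2}A_cG_{\N}^{-1/2}\rVert<1$ guarantees $G_{\N}-A_cG_{\N}A_c\geq0$.
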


\begin{proof}
Let $0<a\leq b<\infty$ be observability bounds for the
$\N$-system. Let $B=C^*C$, and set
$S_m=\sum_{n=0}^{m}A^nBA^n$.
For every $x\in H$,
$\langle S_mx,x\rangle
 =\sum_{n=0}^{m}\lVert CA^nx\rVert^2
 \leq b\lVert x\rVert^2$.
Thus $0\leq S_m\leq b\Id$. Since $(S_m)_{m\geq0}$ is increasing, it
converges in the strong operator topology to a bounded operator
$S=\sum_{n\geq0}A^nBA^n$.
Moreover, $\langle Sx,x\rangle
 =\sum_{n\geq0}\lVert CA^nx\rVert^2$,
and hence the $g$-frame bounds give
$a\Id\leq S\leq b\Id$.
By the definition of $S$, $S-ASA=B\geq0$.

Set
$R=S^{1/2}AS^{-1/2}$.
Then
\[
 R^*R
 =S^{-1/2}ASAS^{-1/2}
 =\Id-S^{-1/2}BS^{-1/2}
 \leq\Id.
\]
Therefore $R$ is a contraction. Since $A$ and $R$ are similar,
$r(A)=r(R)\leq\lVert R\rVert\leq1,$
where $r(\cdot)$ denotes the spectral radius. Since $A$ is positive,
$\lVert A\rVert=r(A)$, and therefore
$0\leq A\leq\Id$.

Since $B=S-ASA$, we have
$A^nBA^n
 =A^nSA^n-A^{n+1}SA^{n+1}.$
Summing from $n=0$ to $n=m$ gives
$S_m=S-A^{m+1}SA^{m+1}$.
Since $S_m\to S$ strongly,
$A^{m+1}SA^{m+1}\longrightarrow0$
in the strong operator topology. Consequently, for every $x\in H$,
\[
 a\lVert A^{m+1}x\rVert^2
 \leq
 \langle SA^{m+1}x,A^{m+1}x\rangle
 =
 \langle (S-S_m)x,x\rangle
 \longrightarrow0.
\]
Thus $A^nx\to0$ for every $x\in H$.

Finally, if $x\in\operatorname{Ran}E_A(\{1\})$, then $A^nx=x$ for
every $n\geq0$. The strong stability of $A$ therefore implies $x=0$,
and hence $E_A(\{1\})=0$.
\end{proof}
\begin{remark}
Let $0\leq A\leq\Id$ and suppose that $A^n\to0$ strongly. Given a
positive invertible operator $S\in\Bcal(H)$, set
$B=S-ASA$ and $R=S^{1/2}AS^{-1/2}$.
Then $S^{-1/2}BS^{-1/2}=\Id-R^*R$.
Consequently, $B\geq0\Longleftrightarrow
 R\ \text{is a contraction}$.

If these conditions hold, let $C=B^{1/2}$. For every $m\geq0$,
\[
 \sum_{n=0}^{m}A^nC^*CA^n
 =\sum_{n=0}^{m}A^nBA^n
 =S-A^{m+1}SA^{m+1}.
\]
Moreover, for every $x\in H$,
$\lVert A^{m+1}SA^{m+1}x\rVert
 \leq \lVert SA^{m+1}x\rVert
 \rightarrow0$.
Hence
$\sum_{n\geq0}A^nC^*CA^n=S$
in the strong operator topology. Thus, for a fixed strongly stable positive contraction $A$, a positive
invertible operator $S$ can be represented as
$S=\sum_{n\geq0}A^nC^*CA^n$
for some $C\in\Bcal(H)$ if and only if $S-ASA\geq0$; see also
\cite[Theorem~3.2]{CabrelliEtAlFiniteIndex}.
\end{remark}

For $m\in\N$, define the partial observability Gramian associated with
$T$ by
$
G_{T,m}
:=
\sum_{k=0}^{m}A^{\tau_k}C^*CA^{\tau_k}.
$
Whenever the $T$-system is Bessel, $(G_{T,m})_{m\geq0}$ converges strongly
to the $T$-sampled observability Gramian
$
G_T
:=
\sum_{k\geq0}A^{\tau_k}C^*CA^{\tau_k}
=
\Gamma_T^*\Gamma_T.
$
Equivalently, $G_T$ is the $g$-frame operator of
$\{CA^{\tau_k}\}_{k\geq0}$. In particular, when $T=\N$, we write
$
G_{\N}
=
\sum_{n\geq0}A^nC^*CA^n
$.

\begin{lemma}
\label{lem:fixed-criterion}
The family $\{CA^{\tau_k}\}_{k\geq0}$ is exactly observable if and only if $G_{T,m}$
converges in the strong operator topology to a bounded positive invertible
operator $G_T$.
Its optimal lower and upper bounds are
$a_T=\lVert G_T^{-1}\rVert^{-1},\
b_T=\lVert G_T\rVert$.

Moreover, if $E_A$ denotes the spectral measure of $A$, then
\begin{equation}\label{eq:double-spectral-gramian}
G_T
=
\lim_{m\to\infty}
\int_{\sigma(A)}\!\int_{\sigma(A)}
K_{T,m}(\lambda,\mu)\,
E_A(\dd\lambda)C^*C E_A(\dd\mu),
\qquad
K_{T,m}(\lambda,\mu)
=
\sum_{k=0}^{m}(\lambda\mu)^{\tau_k},
\end{equation}
where the limit is taken in the strong operator topology.
\end{lemma}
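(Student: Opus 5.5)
The plan has three parts: the equivalence between exact observability and strong convergence of \(G_{T,m}\) to an invertible limit, the identification of the optimal bounds, and the double spectral formula.

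\emph{Equivalence.} For every \(x\in H\) and \(m\in\N\) we have
\[
\langle G_{T,m}x,x\rangle=\sum_{k=0}^{m}\lVert CA^{\tau_k}x\rVert^2 .
\]
Suppose the \(T\)-system is exactly observable with bounds \(0<a\le b\). Then \((G_{T,m})_{m\ge0}\) is an increasing sequence of positive operators with \(G_{T,m}\le b\Id\). By Vigier's theorem (monotone convergence for bounded increasing sequences of self-adjoint operators), it converges strongly to some \(G_T\). Passing to the limit in the quadratic forms gives
\[
\langle G_Tx,x\rangle=\sum_{k\ge0}\lVert CA^{\tau_k}x\rVert^2,
\]
so \(a\Id\le G_T\le b\Id\), and \(G_T\) is invertible. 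This is exactly the argument already used in Lemma~\ref{lem:auto-contraction}.

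Conversely, suppose \(G_{T,m}\to G_T\) strongly with \(G_T\) bounded, positive and invertible. Then the quadratic forms converge, and the identity \(\langle G_Tx,x\rangle=\sum_{k}\lVert CA^{\tau_k}x\rVert^2\) holds. A positive invertible operator satisfies
\[
\lVert G_T^{-1}\rVert^{-1}\Id\le G_T\le\lVert G_T\rVert\Id ,
\]
which gives the \(g\)-frame inequalities.

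\emph{Optimal bounds.} The optimal bounds are \(\inf\) and \(\sup\) of \(\langle G_Tx,x\rangle\) over unit vectors \(x\). For a positive operator these equal \(\min\sigma(G_T)=\lVert G_T^{-1}\rVert^{-1}\) and \(\max\sigma(G_T)=\lVert G_T\rVert\). This is the standard numerical range fact for self-adjoint operators.

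\emph{Double spectral formula.} Fix \(m\). Since \(A^{\tau}=\int_{\sigma(A)}\lambda^{\tau}E_A(\dd\lambda)\), each summand is formally
\[
A^{\tau_k}C^*CA^{\tau_k}=\iint \lambda^{\tau_k}\mu^{\tau_k}\,E_A(\dd\lambda)C^*CE_A(\dd\mu).
\]
Summing over \(k\le m\) produces the kernel \(K_{T,m}\). I would define the double operator integral for kernels of product form \(f(\lambda)g(\mu)\) by \(f(A)C^*Cg(A)\), and extend it linearly to finite sums of such products.

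The point to handle carefully is that the double integral must have an unambiguous meaning, since double operator integrals are not generally defined for arbitrary bounded kernels. Here \(K_{T,m}\) is a finite sum of separable kernels \(\lambda^{\tau_k}\mu^{\tau_k}\), each a product of continuous functions on \(\sigma(A)\). The integral can therefore be taken in this elementary sense, or equivalently as a weak iterated integral
\[
\Bigl\langle\cdot\,x,y\Bigr\rangle=\iint K_{T,m}(\lambda,\mu)\,\dd\langle E_A(\lambda)C^*CE_A(\mu)x,y\rangle .
\]
In the weak form, the identity follows by computing on products of spectral projections and then approximating each continuous factor uniformly by simple functions. With either definition, the integral equals \(G_{T,m}\) exactly.

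Taking the strong limit as \(m\to\infty\) then yields \eqref{eq:double-spectral-gramian}, interpreted as the strong limit of these finite-rank-kernel integrals. This limit is precisely the \(G_T\) obtained in the equivalence step.

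I expect the only real obstacle to be this interpretive point about the meaning of the double integral. Once it is fixed via separable kernels, the rest is routine monotone-convergence reasoning.
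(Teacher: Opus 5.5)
Your proposal is correct and follows the paper's proof. The paper cites the standard $g$-frame characterization from Sun's work, which you reprove directly via monotone convergence of the increasing partial Gramians, and then applies the spectral theorem termwise to $G_{T,m}$ before passing to the strong limit; your observation that $K_{T,m}$ is a finite sum of separable continuous kernels, so that the double integral simply means $\sum_{k\le m}A^{\tau_k}C^*CA^{\tau_k}$, makes explicit what the paper leaves implicit.
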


\begin{proof}
Apply the standard characterization of $g$-frames
\cite[Section~2]{SunGFrames} to the operators
$
\Lambda_k=CA^{\tau_k}$ for $k\geq0.$
It follows that $\{CA^{\tau_k}\}_{k\geq0}$ is exactly observable if and only
if its $g$-frame operator
$
G_T=\Gamma_T^*\Gamma_T
$
is bounded, positive, and invertible. The optimal lower and upper bounds
are $\lVert G_T^{-1}\rVert^{-1}$ and
$\lVert G_T\rVert,$
respectively.

For each $m\geq0$, the spectral theorem gives
\[
G_{T,m}
=
\int_{\sigma(A)}\!\int_{\sigma(A)}
K_{T,m}(\lambda,\mu)\,
E_A(\dd\lambda)C^*C E_A(\dd\mu).
\]
Since $G_{T,m}\to G_T$ in the strong operator topology,
\eqref{eq:double-spectral-gramian} follows.
\end{proof}

In general, \eqref{eq:double-spectral-gramian} cannot be reduced to a
single spectral integral. Indeed, when $C^*C$ does not commute with $A$,
different spectral subspaces of $A$ may interact through the terms
$
E_A(\dd\lambda)C^*C E_A(\dd\mu).
$
We next use this representation to study universal preservation.

\begin{definition}
A locally finite set $T\subset\Rplus$ is called a
\emph{universal time set} if, for every pair of separable complex Hilbert
spaces $H$ and $Y$, every positive operator $A\in\Bcal(H)$, and every
$C\in\Bcal(H,Y)$, exact observability of the $\N$-system implies exact
observability of the $T$-system. The class of all universal time sets is
denoted by $\U$.
\end{definition}

\begin{proposition}
\label{prop:uniformization}
Let $T\in\U$. For every $0<a\leq b<\infty$, there exist constants
$
0<\alpha_T(a,b)\leq\beta_T(a,b)<\infty
$
such that, for every pair of separable Hilbert spaces $H,Y$, every positive
operator $A\in\Bcal(H)$, and every $C\in\Bcal(H,Y)$, the inequalities
$
a\Id\leq G_{\N}\leq b\Id
$
imply
$
\alpha_T(a,b)\Id
\leq G_T\leq
\beta_T(a,b)\Id.
$
\end{proposition}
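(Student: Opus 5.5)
The plan is a direct-sum (gluing) argument by contradiction. Suppose the uniform constants do not exist for some fixed $0<a\leq b<\infty$. Then there is a sequence of systems $(H_j,Y_j,A_j,C_j)$ with $a\Id\leq G_{\N}^{(j)}\leq b\Id$ such that either $\lVert G_T^{(j)}\rVert\geq 4^j$ (when $G_T^{(j)}$ is unbounded, meaning the $T$-system is not Bessel, we take any vector with large partial sums) or $\inf_{\lVert x\rVert=1}\langle G_T^{(j)}x,x\rangle\leq 4^{-j}$. Here $G_T^{(j)}$ exists, bounded and invertible, since $T\in\U$. Passing to a subsequence, we may assume that only one of the two failures occurs.

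Form $H=\bigoplus_j H_j$, $Y=\bigoplus_j Y_j$, $A=\bigoplus_j A_j$ and $C=\bigoplus_j C_j$. Both $H$ and $Y$ are separable, since each summand is separable and there are countably many. By Lemma~\ref{lem:auto-contraction}, each $A_j$ satisfies $0\leq A_j\leq\Id$. Hence $A$ is a bounded positive operator with $\lVert A\rVert\leq1$. Boundedness of $C$ is the one real issue. Since $C_j^*C_j=G_{\N}^{(j)}-A_jG_{\N}^{(j)}A_j\leq G_{\N}^{(j)}\leq b\Id$ by the Stein identity in the proof of Lemma~\ref{lem:auto-contraction}, we get $\lVert C_j\rVert\leq\sqrt b$ uniformly, so $C$ is bounded. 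Functional calculus respects direct sums, so $A^t=\bigoplus_j A_j^t$. For $x=(x_j)$ this gives $\sum_n\lVert CA^nx\rVert^2=\sum_j\sum_n\lVert C_jA_j^nx_j\rVert^2$, which lies between $a\lVert x\rVert^2$ and $b\lVert x\rVert^2$. Thus the $\N$-system of $(A,C)$ is exactly observable.

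Since $T\in\U$, the $T$-system of $(A,C)$ is exactly observable with some bounds $0<\alpha\leq\beta<\infty$. Restrict to vectors supported in a single summand $H_j$. Then $\sum_{t\in T}\lVert CA^tx\rVert^2=\sum_{t\in T}\lVert C_jA_j^tx_j\rVert^2$, so every $(A_j,C_j)$ has $T$-bounds $\alpha$ and $\beta$, independent of $j$. This contradicts $4^j\to\infty$ or $4^{-j}\to0$. Equivalently, one can define $\alpha_T(a,b)$ and $\beta_T(a,b)$ directly as the infimum of optimal lower bounds and the supremum of optimal upper bounds from Lemma~\ref{lem:fixed-criterion}, taken over all admissible systems, and show these are positive and finite by the argument above. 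This also sidesteps any set-theoretic issue about quantifying over ``all Hilbert spaces'', because only a sequence of systems is ever used.

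The main obstacle I expect is being careful in the unbounded case. If $G_T^{(j)}$ fails to exist for some $j$, that already contradicts $T\in\U$, so every $G_T^{(j)}$ is bounded, and the contradiction only needs the norms to blow up along the sequence. The remaining work is to verify that the direct-sum $C$ is bounded, which is exactly the uniform bound $\lVert C_j\rVert^2\leq b$ coming from the Stein identity.
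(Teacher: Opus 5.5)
Your proposal is correct and follows essentially the paper's argument: you glue a sequence of bad systems into a direct sum, use $C_j^*C_j\leq G_{\N}^{(j)}\leq b\Id$ to make $C=\bigoplus_j C_j$ bounded, and contradict $T\in\U$. The only difference is a small streamlining. You obtain both uniform bounds in one step by restricting the direct-sum $T$-bounds to single summands, whereas the paper first proves the upper bound and then uses it to identify $G_T=\bigoplus_j G_{T,j}$ in the lower-bound step.
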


\begin{proof}
Suppose first that no uniform upper bound exists. Then there is a sequence
of systems
$
A_j\in\Bcal(H_j),\ C_j\in\Bcal(H_j,Y_j),
$
such that
$
a\Id\leq G_{\N,j}\leq b\Id,
\
\lVert G_{T,j}\rVert\rightarrow\infty.
$
By Lemma~\ref{lem:auto-contraction}, $0\leq A_j\leq\Id$. Moreover,
$
C_j^*C_j\leq G_{\N,j}\leq b\Id,
$
so the operators $C_j$ are uniformly bounded. Hence
$
A=\bigoplus_{j\geq1}A_j,
\
C=\bigoplus_{j\geq1}C_j
$
define bounded operators from
$
H_\oplus=\bigoplus_{j\geq1}H_j$
to $H_\oplus$ and
$Y_\oplus=\bigoplus_{j\geq1}Y_j,
$
respectively, and $A$ is positive.

For the direct-sum system, the preceding definition gives
$
G_{\N}
=
\bigoplus_{j\geq1}G_{\N,j},
$
and therefore
$
a\Id\leq G_{\N}\leq b\Id.
$
On the other hand, $\lVert G_{T,j}\rVert\to\infty$ implies that the
$T$-system of the direct-sum system is not Bessel. This contradicts
$T\in\U$. Thus a finite uniform upper bound $\beta_T(a,b)$ exists.

Suppose next that no uniform positive lower bound exists. Then one can
choose systems such that
$
a\Id\leq G_{\N,j}\leq b\Id,
\
\lVert G_{T,j}^{-1}\rVert^{-1}\rightarrow0.
$
Form the corresponding direct-sum system as above. By the uniform upper
bound already proved,
$
G_{T,j}\leq\beta_T(a,b)\Id
$
for every $j$. Hence its $T$-sampled observability Gramian is the bounded operator
$
G_T=\bigoplus_{j\geq1}G_{T,j}.
$
However,
\[
\inf\sigma(G_T)
=
\inf_{j\geq1}\inf\sigma(G_{T,j})
=
\inf_{j\geq1}\lVert G_{T,j}^{-1}\rVert^{-1}
=0.
\]
Thus $G_T$ is not bounded below, and consequently the $T$-system of the
direct-sum system is not exactly observable. This again contradicts
$T\in\U$.

Therefore a uniform positive lower bound $\alpha_T(a,b)$ also exists.
\end{proof}

Fix $m\in\mathbb N^+$, points
$
 \lambda=(\lambda_1,\ldots,\lambda_m)\in[0,1)^m,
$
and positive integers $d_1,\ldots,d_m$. Set
$
 \mathcal E=\bigoplus_{i=1}^{m}\mathbb C^{d_i}.
$
For each $i$, let
$
 P_i:\mathcal E\to\mathbb C^{d_i}$ and
 $\iota_i:\mathbb C^{d_i}\to\mathcal E
$
denote the canonical projection and inclusion, respectively. Given
$S\in\Bcal(\mathcal E)$, define
\[
 S_{ij}=P_iS\iota_j
 \in\Bcal(\mathbb C^{d_j},\mathbb C^{d_i}),
 \qquad 1\leq i,j\leq m.
\]
Thus
$
 S=[S_{ij}]_{i,j=1}^{m}.
$

We use the conventions $0^0=1$ and $0^t=0$ for $t>0$. For
$z\in[0,1)$, define
$
 \Phi_T(z)
 =
 (1-z)\sum_{t\in T}z^t
$
whenever the series converges. Define
\begin{equation}
 D_\lambda(S)
 =
 \bigl[(1-\lambda_i\lambda_j)S_{ij}\bigr]_{i,j=1}^{m}.
 \label{eq:D-lambda}
\end{equation}
Whenever
$
 \sum_{t\in T}(\lambda_i\lambda_j)^t<\infty$ for $1\leq i,j\leq m,$
define
\begin{equation}
 M_T^\lambda(S)
 =
 \bigl[\Phi_T(\lambda_i\lambda_j)S_{ij}\bigr]_{i,j=1}^{m}.
 \label{eq:M-lambda}
\end{equation}
Convergence of all the series above is included in the preservation
property introduced below.

The set
$
 \mathcal P_\lambda
 =
 \bigl\{
 S\in\Bcal(\mathcal E):
 S\geq0,\ D_\lambda(S)\geq0
 \bigr\}
$
is called the radial Stein--Pick cone associated with $\lambda$ and the
chosen multiplicities $d_1,\ldots,d_m$.
The terminology reflects the Stein positivity condition
\(D_\lambda(S)\geq0\) and the radial Pick kernel
\((1-\lambda_i\lambda_j)^{-1}\) underlying the corresponding finite
spectral model.
\begin{definition}
\label{def:complete-preservation}
We say that $T$ has the \emph{complete bound preservation property on
radial Stein--Pick cones} if, for every $0<a\leq b<\infty$, there exist
constants $0<\alpha\leq\beta<\infty$, depending only on $T$, $a$, and $b$,
such that, for every $m\in\mathbb N^+$, every choice of positive integers
$d_1,\ldots,d_m$, every
$
 \lambda=(\lambda_1,\ldots,\lambda_m)\in[0,1)^m,
$
and every
$
 S\in\Bcal(\mathcal E),
 \
 \mathcal E=\bigoplus_{i=1}^{m}\mathbb C^{d_i},
$
we have
\begin{equation}
\label{eq:complete-preservation}
\begin{aligned}
a\Id\leq S\leq b\Id,
\quad
D_\lambda(S)\geq0
\quad\Longrightarrow\quad
 \displaystyle
 \sum_{t\in T}(\lambda_i\lambda_j)^t<\infty
 \quad\text{for all }1\leq i,j\leq m,\quad
 \alpha\Id\leq M_T^\lambda(S)\leq\beta\Id.
\end{aligned}
\end{equation}
\end{definition}

The significance of the uniformity in
Definition~\ref{def:complete-preservation} is that neither the number of
spectral points nor their multiplicities are bounded. Thus this is a complete condition with constants independent of the
dimension, rather than a collection of estimates whose constants depend
on the finite dimensional model.
Lemma~\ref{lem:finite-spectral-identity} identifies these cones precisely
with positive systems with finite spectrum having a prescribed integer
observability Gramian.

\begin{lemma}
\label{lem:finite-spectral-identity}
Let
$
 A=\diag(\lambda_1\Id_{d_1},\ldots,\lambda_m\Id_{d_m}),
 \
 B=D_\lambda(S).
$
If $B\geq0$, then
$
 \sum_{n\geq0}A^nBA^n=S.
$
Moreover, whenever $M_T^\lambda(S)$ is defined,
$
 \sum_{t\in T}A^tBA^t=M_T^\lambda(S).
$
Both series converge in operator norm.
\end{lemma}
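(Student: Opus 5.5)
The plan is a direct block-matrix computation, using that $A$ acts on each summand $\mathbb C^{d_i}$ as the scalar $\lambda_i$. With the conventions $0^0=1$ and $0^t=0$ for $t>0$, the functional calculus gives $A^t=\diag(\lambda_1^t\Id_{d_1},\ldots,\lambda_m^t\Id_{d_m})$ for every $t\geq0$; this also holds when some $\lambda_i=0$, since then $A^0=\Id$ and $A^t$ vanishes on that block for $t>0$. Consequently, for any block operator $X=[X_{ij}]$ and any $s,t\geq0$,
\[
 A^sXA^t=\bigl[\lambda_i^s\lambda_j^tX_{ij}\bigr]_{i,j=1}^m .
\]
In particular, the $(i,j)$ block of $A^tBA^t$ is $(\lambda_i\lambda_j)^t(1-\lambda_i\lambda_j)S_{ij}$.

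For the first identity, I would sum the $(i,j)$ block over $n\geq0$. Set $z=\lambda_i\lambda_j\in[0,1)$. Then
\[
 \sum_{n\geq0}z^n(1-z)=1,
\]
which includes the case $z=0$, where only the $n=0$ term survives. So the $(i,j)$ block of the sum is $S_{ij}$, and the sum equals $S$. Alternatively, one can telescope: $A^nBA^n=A^nSA^n-A^{n+1}SA^{n+1}$ and $\lVert A^{n}SA^{n}\rVert\leq \max_i\lambda_i^{2n}\lVert S\rVert\to0$, which gives the same conclusion as in the Remark after Lemma~\ref{lem:auto-contraction}. The hypothesis $B\geq0$ is not needed for the algebraic identity itself; it only guarantees that the partial sums increase.

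For the second identity, summing the $(i,j)$ block over $t\in T$ gives
\[
 (1-z)\Bigl(\sum_{t\in T}z^t\Bigr)S_{ij}=\Phi_T(z)S_{ij}
\]
whenever $\sum_{t\in T}z^t<\infty$. This convergence is exactly the assumption that $M_T^\lambda(S)$ is defined. Therefore $\sum_{t\in T}A^tBA^t=M_T^\lambda(S)$.

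Norm convergence is the only point needing care, and it is routine here. Since $\mathcal E$ is finite dimensional, there are finitely many blocks, and each block series is a scalar series (absolutely convergent, with nonnegative terms) multiplying a fixed matrix. Blockwise convergence then implies convergence in operator norm. For the sum over $T$, order the terms by the increasing enumeration $\tau_k$; the terms are nonnegative scalars, so the order is irrelevant.
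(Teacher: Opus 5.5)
Your proposal is correct and matches the paper's proof: both compute the $(i,j)$ block as $(1-\lambda_i\lambda_j)S_{ij}\sum_{n\geq0}(\lambda_i\lambda_j)^n=S_{ij}$ (respectively $\Phi_T(\lambda_i\lambda_j)S_{ij}$ for the sum over $T$) and then compare blocks. Your further remarks are accurate and only make explicit what the paper leaves implicit: the convention $0^0=1$, the fact that finite dimensionality turns blockwise convergence into norm convergence, and the telescoping alternative via $B=S-ASA$.
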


\begin{proof}
For every $1\leq i,j\leq m$,
\(
 \left(\sum_{n\geq0}A^nBA^n\right)_{ij}
 =
 (1-\lambda_i\lambda_j)S_{ij}
 \sum_{n\geq0}(\lambda_i\lambda_j)^n
 =
 S_{ij}.
\)
Whenever $M_T^\lambda(S)$ is defined, we also have
\(
 \left(\sum_{t\in T}A^tBA^t\right)_{ij}
 =
 (1-\lambda_i\lambda_j)S_{ij}
 \sum_{t\in T}(\lambda_i\lambda_j)^t
 =
 \Phi_T(\lambda_i\lambda_j)S_{ij}.
\)
The two identities follow by comparing all the blocks.
\end{proof}

The passage from finite spectra to arbitrary spectra uses the following
continuity statement.

\begin{lemma}
\label{lem:strict-continuity}
Let $0<r<r_0<1$, and let
$
 A_j,A,B_j,B\in\Bcal(H).
$
Suppose that $A_j$ and $A$ are positive,
$
 \lVert A_j\rVert,\lVert A\rVert\leq r,
 \
 A_j\rightarrow A,
 \
 B_j\rightarrow B
$
in operator norm. If
$
 \sum_{t\in T}r_0^{2t}<\infty,
$
then
$
 \sum_{t\in T}A_j^tB_jA_j^t
 \rightarrow
 \sum_{t\in T}A^tBA^t
$
in operator norm.
\end{lemma}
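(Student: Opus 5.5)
The proof runs termwise: we show each term converges uniformly in $j$, dominate the tail uniformly by the summable series $\sum_t r_0^{2t}$, and combine the two with a standard $\varepsilon/3$ argument.

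\emph{Uniform tail bound.} Since $B_j\to B$ in norm, $\sup_j\lVert B_j\rVert=:M<\infty$. For $t>0$ the functional calculus gives $\lVert A_j^t\rVert=\lVert A_j\rVert^t\leq r^t$, and $A^0=\Id$. Hence
\[
\lVert A_j^tB_jA_j^t\rVert\leq M r^{2t}\leq M r_0^{2t},
\]
and the same bound holds for $A^tBA^t$. The hypothesis $\sum_{t\in T}r_0^{2t}<\infty$ then gives absolute norm convergence of all the series. It also gives, for each $\varepsilon>0$, a finite set $F\subset T$ such that $\sum_{t\in T\setminus F}\lVert A_j^tB_jA_j^t\rVert<\varepsilon$ for all $j$, and likewise for the limit series. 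Note that only $r<1$ is needed for this domination; having the slack $r<r_0$ is harmless here.

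\emph{Termwise convergence.} It remains to show that $A_j^tB_jA_j^t\to A^tBA^t$ in norm for each fixed $t\in F$. The case $t=0$ is trivial. For $t>0$, the function $f(x)=x^t$ is continuous on $[0,r]$, so by Weierstrass we may choose a polynomial $p$ with $\sup_{[0,r]}|f-p|<\delta$. Since the spectra of $A_j$ and $A$ lie in $[0,r]$, the continuous functional calculus gives
\[
\lVert A_j^t-p(A_j)\rVert<\delta,\qquad \lVert A^t-p(A)\rVert<\delta.
\]
Moreover $p(A_j)\to p(A)$ in norm, because multiplication is norm continuous. Therefore $A_j^t\to A^t$ in norm. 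Products of norm-convergent, uniformly bounded sequences converge, so $A_j^tB_jA_j^t\to A^tBA^t$.

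\emph{Assembly.} Split both sums into the part over $F$ and the part over $T\setminus F$. The finite part converges by the termwise step, and each tail is below $\varepsilon$ uniformly in $j$, which gives
\[
\limsup_{j\to\infty}\Bigl\lVert\sum_{t\in T}A_j^tB_jA_j^t-\sum_{t\in T}A^tBA^t\Bigr\rVert\leq 2\varepsilon.
\]
Since $\varepsilon$ is arbitrary, the claimed norm convergence follows.

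The only delicate point is the norm continuity of $A\mapsto A^t$ for non-integer $t$ near the spectral point $0$, where $x^t$ is not Lipschitz. The polynomial-approximation argument on the fixed compact interval $[0,r]$ handles this without any Lipschitz estimate, so I expect no real obstacle.
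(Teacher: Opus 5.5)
Your proof is correct and is essentially the paper's argument: termwise norm convergence $A_j^t\to A^t$ by continuous functional calculus (your polynomial approximation on $[0,r]$ just makes this step explicit), the uniform domination $\lVert A_j^tB_jA_j^t\rVert\leq Mr_0^{2t}$ of the tails, and a split into a finite part and a tail. One small correction to your side remark: the domination needs $r\leq r_0$ together with the summability hypothesis, not merely $r<1$, since for a general locally finite $T$ the series $\sum_{t\in T}r^{2t}$ can diverge for every $r<1$.
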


\begin{proof}
Continuous functional calculus gives
$
 A_j^t\rightarrow A^t
$
in operator norm for every fixed $t\in T$. Since $B_j\to B$ in operator
norm,
$
 M:=\sup_{j\geq1}\lVert B_j\rVert+\lVert B\rVert<\infty.
$
For every $j$ and every $t\in T$,
$
 \lVert A_j^tB_jA_j^t\rVert
 \leq M r^{2t}
 \leq M r_0^{2t},
$
and the same estimate holds for $A^tBA^t$. Thus both series converge in
operator norm, with tails bounded uniformly in $j$. Their finite partial
sums converge term by term, and the asserted convergence follows.
\end{proof}

\begin{theorem}
\label{thm:complete-characterization}
A locally finite set $T\subset\Rplus$ belongs to $\U$ if and only if it
has the complete bound preservation property on radial Stein--Pick cones.
\end{theorem}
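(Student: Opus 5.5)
The necessity direction follows from Proposition~\ref{prop:uniformization} and Lemma~\ref{lem:finite-spectral-identity}. The sufficiency direction approximates an arbitrary positive system by finite spectral models, transfers the uniform bounds from the cones, and passes to the limit.

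\emph{Necessity.} Let $T\in\U$ and fix $0<a\leq b$, and let $\alpha=\alpha_T(a,b)$, $\beta=\beta_T(a,b)$ be the constants from Proposition~\ref{prop:uniformization}. Take any $\lambda\in[0,1)^m$, multiplicities $d_i$, and $S$ with $a\Id\leq S\leq b\Id$ and $D_\lambda(S)\geq0$. Put $A=\diag(\lambda_i\Id_{d_i})$ and $C=D_\lambda(S)^{1/2}$. By Lemma~\ref{lem:finite-spectral-identity}, $G_{\N}=S$, so the $\N$-system is exactly observable with bounds $a,b$. Proposition~\ref{prop:uniformization} then gives $\alpha\Id\leq G_T\leq\beta\Id$. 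Convergence of $\sum_{t\in T}(\lambda_i\lambda_j)^t$ is automatic: every $\lambda_i\lambda_j<1$, and the Bessel property forces $\sum_{t\in T}\lambda^{2t}<\infty$ whenever $\lambda>0$ occurs with nonzero block $B_{ii}$. If $B_{ii}$ vanishes we may instead argue that $\sum_{t\in T}r^t<\infty$ for all $r<1$ follows from a single scalar model, for instance $m=1$, $S=1$. A Cauchy--Schwarz bound then gives $\sum(\lambda_i\lambda_j)^t\leq\bigl(\sum\lambda_i^{2t}\bigr)^{1/2}\bigl(\sum\lambda_j^{2t}\bigr)^{1/2}$. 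Finally Lemma~\ref{lem:finite-spectral-identity} identifies $G_T=M_T^\lambda(S)$.

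\emph{Sufficiency.} Assume the complete preservation property, and let $(A,C)$ have $a\Id\leq G_{\N}\leq b\Id$. By Lemma~\ref{lem:auto-contraction}, $0\leq A\leq\Id$ and $E_A(\{1\})=0$. First treat the compressed problem. Fix $r<1$ and let $P_r=E_A([0,r])$. For each $x$ in a finite-dimensional subspace $F\subset\operatorname{Ran}P_r$, compare $\langle G_Tx,x\rangle$ with finite-spectrum models. Choose finite partitions of $[0,r]$ and step functions $A_j=\sum_i\lambda_i E_A(\Delta_i)$ converging to $AP_r$ in norm. The difficulty is that $S_j:=\sum A_j^nBA_j^n$ need not be finite dimensional, so we compress.

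The cleaner route is to work directly with $H_r=\operatorname{Ran}P_r$ and the operators $A_r=AP_r$ and $B_r=P_rBP_r$. Let $S_r=\sum_n A_r^nB_rA_r^n$; this is the compression $P_rG_{\N}P_r$ only up to error terms, since $B$ couples $[0,r]$ with $(r,1)$. To avoid this, I would use the Stein characterization from the Remark. On $H_r$, the operator $S'=P_rG_{\N}P_r$ satisfies $S'-A_rS'A_r=P_r(G_{\N}-AG_{\N}A)P_r=P_rBP_r\geq0$, because $P_r$ commutes with $A$. Moreover $a\Id\leq S'\leq b\Id$ on $H_r$, and $\sum_{t\in T}A_r^tB_rA_r^t=P_rG_TP_r$, again because $P_r$ commutes with $A^t$.

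Next discretize. Take $A_j$ with finite spectrum in $[0,r]$, commuting with $A$, and $\lVert A_j-A_r\rVert\to0$. Set $B_j=S'-A_jS'A_j$. This need not be positive, so replace $S'$ by a perturbation, or scale: $A_j'=(1-\varepsilon_j)A_j$ does not obviously help either. I would try taking $A_j$ to be functional-calculus step approximants with $A_j\leq A_r$ and estimating the positivity defect. That defect tends to zero, so adding $\delta_j\Id$ to $B_j$ and setting $S_j=\sum A_j^n(B_j+\delta_j)A_j^n$ gives $S_j\to S'$, with bounds close to $a,b$. Compressing further to finite-dimensional subspaces reducing $A_j$ (each eigenspace truncated to dimension $d_i$, which keeps the Stein cone structure since compression to a reducing subspace of $A_j$ preserves $D_\lambda(S)\geq0$), the hypothesis yields $\alpha\Id\leq M_T^\lambda\leq\beta\Id$ with constants depending only on $(a/2,2b)$. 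Lemma~\ref{lem:strict-continuity} then passes this to the limit, giving $\alpha\Id\leq P_rG_TP_r\leq\beta\Id$ on $H_r$ uniformly in $r$.

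Finally let $r\uparrow1$. Since $P_r\to\Id$ strongly, because $E_A(\{1\})=0$, the partial sums satisfy $\langle G_{T,m}x,x\rangle=\lim_r\langle P_rG_{T,m}P_rx,x\rangle\leq\beta\lVert x\rVert^2$. Hence the system is Bessel, and the lower bound passes similarly by monotone/strong convergence. The main obstacle is the discretization step that keeps $D_\lambda(S_j)\geq0$ while keeping $S_j$ close to $S'$. I expect the $\delta_j$-regularization, using $\sum A_j^n\delta_jA_j^n\leq\delta_j(1-r^2)^{-1}$, to handle it.
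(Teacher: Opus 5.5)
Your proposal is correct. Your necessity argument is the paper's. For sufficiency you use the same architecture as the paper: a spectral cutoff justified by $E_A(\{1\})=0$, finite-spectrum approximation preserving Stein positivity, compression to a finite-dimensional reducing subspace containing the test vector, and Lemma~\ref{lem:strict-continuity}. The difference is the device you use to restore positivity.

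\textbf{How the two devices compare.} The paper first scales $A\mapsto cA$ with $c<1$, so that $G_{\N}^{1/2}A_cG_{\N}^{-1/2}$ is a strict contraction. This is an open condition, so it survives norm approximation by a finite-spectrum $A_0$, and $B_0=G_{\N}-A_0G_{\N}A_0\geq0$ holds with the integer Gramian kept exactly equal to $G_{\N}$. So the scaling you dismissed does work, provided it is done before discretizing rather than after.

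Your additive shift also works, and you can make it explicit. Put $\delta_j:=\lVert B_j-B_r\rVert\leq 2b\lVert A_j-A_r\rVert\to0$. Then $B_j+\delta_j\Id\geq B_r\geq0$, and telescoping gives $S_j=S'+\delta_j(\Id-A_j^2)^{-1}$, hence $a\Id\leq S_j\leq\bigl(b+\delta_j/(1-r^2)\bigr)\Id$.

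\textbf{What each route buys.} Your route requires the constants for an enlarged pair such as $(a,2b)$ instead of $(a,b)$. This is harmless for membership in $\U$, but it is quantitatively weaker than the paper, which transfers $\alpha(a,b),\beta(a,b)$ directly. In exchange, you handle the upper and lower bounds by a single limiting argument on $H_r$. The paper instead proves the upper bound separately, by contradiction on finite partial sums and without a spectral cutoff.

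\textbf{Points to correct or make explicit.}
\begin{enumerate}
\item Your worry that $\sum_nA_r^nB_rA_r^n$ equals $P_rG_{\N}P_r$ only up to error terms is unfounded. Since $P_r$ commutes with $A$, the identity is exact.
\item State that the $1\times1$ case of the hypothesis gives $\sum_{t\in T}r_0^{2t}<\infty$ for every $r_0<1$. This is needed to invoke Lemma~\ref{lem:strict-continuity}.
\item Spell out the transfer from finite models to the infinite-dimensional $H_r$. Each $x\in H_r$ lies in the finite-dimensional subspace $K=\bigoplus_i\operatorname{span}\{E_A(\Delta_i)x\}$, which reduces $A_j$. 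On $K$, the compressed $T$-Gramian of $(A_j,B_j+\delta_j\Id)$ is exactly $M_T^\lambda(P_KS_j|_K)$. This is what carries the cone bounds over to $H_r$.
\end{enumerate}
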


\begin{proof}
Suppose first that $T\in\U$. Fix $0<a\leq b<\infty$ and consider finite
data as in Definition~\ref{def:complete-preservation}. Let
\(
 A=\diag(\lambda_1\Id_{d_1},\ldots,\lambda_m\Id_{d_m}),
 \
 B=D_\lambda(S),
\)
and take $C=B^{1/2}$. By
Lemma~\ref{lem:finite-spectral-identity}, the $\N$-sampled observability
Gramian is
$
 G_{\N}=S.
$
Hence Proposition~\ref{prop:uniformization} shows that the $T$-sampled
observability Gramian exists and satisfies uniform upper and lower bounds.

Applying the same argument to the scalar case gives
$\sum_{t\in T}r^{2t}<\infty$ for $0\leq r<1$.
Taking $r=\sqrt{\lambda_i\lambda_j}$ shows that
$\sum_{t\in T}(\lambda_i\lambda_j)^t<\infty$ for all $i,j$.
Thus $M_T^\lambda(S)$ is well defined, and
Lemma~\ref{lem:finite-spectral-identity} gives
$G_T=M_T^\lambda(S)$.
Proposition~\ref{prop:uniformization} therefore yields the complete bound
preservation property.

Conversely, suppose that $T$ has the complete bound preservation property,
and fix $0<a\leq b<\infty$. Let $\alpha,\beta$ be the corresponding
constants in \eqref{eq:complete-preservation}. Consider an observation
system $(A,C)$ with $A\geq0$ such that
$
 a\Id\leq G_{\N}\leq b\Id.
$
Set
$
 B=C^*C=G_{\N}-AG_{\N}A\geq0.
$

We first establish the upper bound. Suppose that some finite partial sum is not bounded above by $\beta\Id$. Then there exist a finite set
$F\subset T$, a unit vector $x$, and $\varepsilon>0$ such that
$
 \sum_{t\in F}\langle A^tBA^t x,x\rangle
 >\beta+2\varepsilon.
$
For $c<1$ sufficiently close to $1$, set
$
 A_c=cA,
 \
 B_c=G_{\N}-A_cG_{\N}A_c.
$
Since
$
 \lVert G_{\N}^{1/2}A_cG_{\N}^{-1/2}\rVert<1,
$
we have $B_c\geq0$. Moreover,
$\sum_{n\geq0}A_c^nB_cA_c^n=G_{\N}$.
By continuity,
$
 \sum_{t\in F}\langle A_c^tB_cA_c^t x,x\rangle
 >\beta+\varepsilon.
$
Approximate $A_c$ in norm by a positive operator $A_0$ with finite spectrum so
closely that
$
 \lVert G_{\N}^{1/2}A_0G_{\N}^{-1/2}\rVert<1,
 \
 B_0:=G_{\N}-A_0G_{\N}A_0\geq0,
$
and the preceding quadratic form remains strictly larger than $\beta$.

Write
\(
A_0=\sum_j\lambda_jP_j
\)
and let
\(
K=\bigoplus_j\operatorname{span}\{P_jx\},
\)
where zero summands are omitted. Then \(K\) is finite dimensional,
\(x\in K\), and \(K\) reduces \(A_0\). Set
\(
A_K=A_0|_K
\)
and
\(
G_{\N,K}=P_KG_{\N}|_K.
\)
Since
\[
G_{\N,K}-A_KG_{\N,K}A_K
=
P_K\bigl(G_{\N}-A_0G_{\N}A_0\bigr)|_K
=
P_KB_0|_K
\geq0,
\]
we have
\(
a\Id_K\leq G_{\N,K}\leq b\Id_K.
\)
Moreover, for every \(t\in T\),
\[
\left\langle
A_K^t
\bigl(G_{\N,K}-A_KG_{\N,K}A_K\bigr)
A_K^t x,x
\right\rangle
=
\langle A_0^tB_0A_0^t x,x\rangle.
\]
Hence the quadratic form of the finite sum over \(F\) is unchanged.
We therefore obtain an admissible finite spectral model for which that
quadratic form exceeds \(\beta\), contradicting
\eqref{eq:complete-preservation}.
Since these sums form an increasing family of positive operators, they
converge strongly to an operator $G_T$ satisfying
$
 0\leq G_T\leq\beta\Id.
$

It remains to prove the lower bound. Suppose that
$G_T\not\geq\alpha\Id$. Then there exist a unit vector $x$ and
$\varepsilon>0$ such that
$
 \langle G_Tx,x\rangle<\alpha-3\varepsilon.
$
By Lemma~\ref{lem:auto-contraction}, $E_A(\{1\})=0$, and hence
$E_A([0,r])\to\Id$ strongly as $r\uparrow1$. Therefore, for some $r<1$, the normalized vector
$
 y=\frac{E_A([0,r])x}{\lVert E_A([0,r])x\rVert}
$
satisfies
$
 \langle G_Ty,y\rangle<\alpha-2\varepsilon.
$
Compress $A$, $G_{\N}$, and $B$ to $E_A([0,r])H$. Since this spectral
subspace reduces $A$, the compressed operators still satisfy
$
 a\Id\leq G_{\N}\leq b\Id,
 \
 B=G_{\N}-AG_{\N}A\geq0,
$
while now $\lVert A\rVert\leq r<1$.

The scalar case of the complete bound preservation property implies
$
 \sum_{t\in T}r_0^{2t}<\infty$ for $0<r_0<1$.
Choose $r<\rho<\rho_0<1$. For $c<1$ sufficiently close to $1$, set
$A_c=cA$ and $B_c=G_{\N}-A_cG_{\N}A_c$. Then $B_c\geq0$,
$A_c\to A$, and $B_c\to B$ as $c\uparrow1$.
By Lemma~\ref{lem:strict-continuity},
$\sum_{t\in T}A_c^tB_cA_c^t\to\sum_{t\in T}A^tBA^t$
in operator norm.

Approximate $A_c$ in norm by a positive operator $A_0$ with finite spectrum so
closely that $\lVert A_0\rVert\leq\rho$ and
$B_0:=G_{\N}-A_0G_{\N}A_0\geq0$.
Since $B_0\to B_c$ as $A_0\to A_c$, another application of
Lemma~\ref{lem:strict-continuity} shows that the corresponding series
changes by an arbitrarily small amount. Hence $c$ and $A_0$ may be chosen
so that its quadratic form at $y$ is strictly smaller than
$\alpha-\varepsilon$.

Finally, write
\(
A_0=\sum_j\lambda_jP_j
\)
and let
\(
K=\bigoplus_j\operatorname{span}\{P_jy\},
\)
again omitting zero summands. Then \(K\) is finite dimensional,
\(y\in K\), and \(K\) reduces \(A_0\). With
\(
A_K=A_0|_K
\)
and
\(
G_{\N,K}=P_KG_{\N}|_K,
\)
we have
\(
a\Id_K\leq G_{\N,K}\leq b\Id_K,
G_{\N,K}-A_KG_{\N,K}A_K
=
P_KB_0|_K
\geq0.
\)
Moreover, for every \(t\in T\),
\(
\left\langle
A_K^t
\bigl(G_{\N,K}-A_KG_{\N,K}A_K\bigr)
A_K^t y,y
\right\rangle
=
\langle A_0^tB_0A_0^t y,y\rangle.
\)
Thus the quadratic form of the sampled Gramian at \(y\) is unchanged.
This yields an admissible finite spectral model whose sampled Gramian
has a quadratic form strictly smaller than \(\alpha\), again
contradicting \eqref{eq:complete-preservation}.
Thus
$
 \alpha\Id\leq G_T\leq\beta\Id.
$
Hence the $T$-system is exactly observable, and therefore
$T\in\U$.
\end{proof}

\section{Scalar bounds, commuting systems, and the noncommutative obstruction}
We now extract the scalar consequences of the complete Stein--Pick
criterion and determine how far they characterize universal sampling.
The scalar condition is shown to be equivalent to linear counting, and
it gives a complete characterization in the commuting case. We then
construct a counterexample showing that the same condition is not
sufficient for general positive systems.

Let
$
 N_T(R)=\#(T\cap[0,R]),
 \
 F_T(r)=(1-r^2)\sum_{t\in T}r^{2t}.
$

\begin{theorem}
\label{thm:abel-counting}
If $T\in\U$, then $0\in T$ and there exist constants
$0<c_T\leq C_T<\infty$ such that
\begin{equation}\label{eq:abel-bounds}
 c_T\leq F_T(r)\leq C_T,
 \qquad 0<r<1.
\end{equation}
Moreover, for a locally finite set $T\subset\Rplus$ containing $0$,
\eqref{eq:abel-bounds} is equivalent to the existence of constants
$0<c_T'\leq C_T'<\infty$ and $R_0>0$ such that
\begin{equation}\label{eq:linear-counting}
 c_T'R\leq N_T(R)\leq C_T'R,
 \qquad R\geq R_0.
\end{equation}
\end{theorem}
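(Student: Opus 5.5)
The plan is to specialize Theorem~\ref{thm:complete-characterization} (equivalently, Proposition~\ref{prop:uniformization}) to scalar models. Take $m=1$, $d_1=1$, $\lambda_1=r\in[0,1)$, and $S=1$. Then $D_\lambda(S)=1-r^2\geq0$, so $S$ lies in the radial Stein--Pick cone with $a=b=1$. The preservation property gives constants $0<\alpha\leq\beta$, independent of $r$, with
\[
\alpha\leq M_T^\lambda(S)=\Phi_T(r^2)=F_T(r)\leq\beta .
\]
This is \eqref{eq:abel-bounds} for $0<r<1$, with $c_T=\alpha$ and $C_T=\beta$.

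For $0\in T$ we use $r=0$, which is allowed since $\lambda\in[0,1)^m$. With the conventions $0^0=1$ and $0^t=0$ for $t>0$, we get $\Phi_T(0)=1$ if $0\in T$ and $\Phi_T(0)=0$ otherwise. The lower bound $\alpha>0$ then forces $0\in T$. Concretely, this is the system $A=0$, $C=I$ on $\mathbb C$: the $\N$-system is exactly observable, while the $T$-system is identically zero when $0\notin T$.

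For the equivalence, assume $0\in T$ and write the Abel sum as a Stieltjes integral. With $s=-2\log r$, so that $s\downarrow0$ as $r\uparrow1$, and $1-r^2\asymp s$ for $r$ near $1$, we have
\[
\sum_{t\in T}r^{2t}=\int_{[0,\infty)}\e^{-sR}\dd N_T(R)=s\int_0^\infty \e^{-sR}N_T(R)\dd R .
\]
\emph{Counting $\Rightarrow$ Abel.} Suppose \eqref{eq:linear-counting} holds. Local finiteness makes $N_T$ bounded on $[0,R_0]$, and $N_T\geq1$ everywhere because $0\in T$. Hence $c''\max(1,R)\leq N_T(R)\leq C''(1+R)$ for all $R$. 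Inserting this into the integral gives $s\int_0^\infty \e^{-sR}N_T\,dR\asymp 1/s$ for small $s$, so $F_T(r)\asymp1$ for $r$ near $1$. On any compact range $r\leq r_1<1$ the series converges, and $F_T$ is continuous and at least $1-r^2>0$ because $0\in T$.

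\emph{Abel $\Rightarrow$ counting.} For the upper bound, use $N_T(R)\e^{-sR}\leq\sum_{t\leq R}\e^{-st}\leq C_T/(1-r^2)$ and choose $s=1/R$. This gives $N_T(R)\lesssim R$.

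The lower bound is the main obstacle, since the Abel bound from below does not directly bound $N_T$ from below without a tail estimate; we use the upper counting bound just proved to supply one. Fix a large constant $L$ and set $s=L/R$. Split the sum into $t\leq R$, $R<t\leq R'$, and $t>R'$ with $R'=MR$. The first piece is at most $N_T(R)$. Integrating by parts with $N_T(u)\leq C'u$, the tail beyond $MR$ is at most $C'\int_{MR}^\infty(su)s\e^{-su}\,du/s\cdot s$. In total, $s\sum_{t>MR}\e^{-st}\lesssim C'(LM+1)\e^{-LM}$, which is small compared with the lower bound $c_T$ once $LM$ is large. The middle piece, bounded termwise by $\e^{-L}$, contributes at most $s\,\e^{-L}N_T(MR)\lesssim C'LM\e^{-L}$. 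We therefore take $M=2$ and $L$ large, so that both error terms are below $c_T/2$. Then
\[
s\,N_T(R)\geq c_T-\tfrac{c_T}{2},
\]
that is, $N_T(R)\geq (c_T/2L)\,R$ for $R\geq R_0$, which is \eqref{eq:linear-counting}.
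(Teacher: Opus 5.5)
Your proposal is correct, and the counting equivalence follows the paper's own argument: the Laplace--Stieltjes representation, the upper bound from $s=1/R$, and the lower bound from controlling the tail with the upper counting bound. The paper takes $u=1/R$, cuts at $qR$, proves $N_T(qR)\geq aR/2$ and then rescales. You instead push the scale to $s=L/R$ and bound $N_T(R)$ directly. Your middle piece is unnecessary, since splitting at $R$ with $M=1$ already works. The displayed tail integral is garbled, but the resulting bound $s\sum_{t>MR}\e^{-st}\leq C'(LM+1)\e^{-LM}$ is correct. You should state explicitly that $s\sum_{t}\e^{-st}\geq(1-\e^{-s})\sum_t\e^{-st}\geq c_T$ for every $s>0$.

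The genuine difference is how you get the Abel bounds from $T\in\U$. You apply the complete bound preservation property, that is Proposition~\ref{prop:uniformization}, to the $1\times1$ models $\lambda_1=r$, $S=1$. This gives bounds uniform in $r$ at every point of $[0,1)$ at once, and the case $r=0$ yields $0\in T$ in the same stroke. No almost-everywhere or continuity argument is needed. The paper instead uses one explicit Parseval system, multiplication by $r$ and $\sqrt{1-r^2}$ on $L^2(0,1)$. This works straight from the definition of $\U$ without the direct-sum uniformization, at the cost of getting the bounds only almost everywhere and then upgrading them through continuity of $F_T$.

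The paper's choice also pays off later. Its test system satisfies $[A,C^*C]=0$, so the proof of Theorem~\ref{thm:commuting-universal} can cite it verbatim for necessity in $\Ucom$. Your route transfers as well, but only after you note that the direct-sum argument of Proposition~\ref{prop:uniformization} stays inside the commuting class: a direct sum of scalar systems is diagonal. Theorem~\ref{thm:complete-characterization} itself is stated only for $\U$.
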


\begin{proof}
Suppose first that $T\in\U$. If $0\notin T$, take
$A=0$ and $C=\Id$. Then
$
 \sum_{n\geq0}\lVert CA^n x\rVert^2=\lVert x\rVert^2,
$
whereas $CA^t=0$ for every $t\in T$. Thus the $T$-system is not exactly
observable, a contradiction. Hence $0\in T$.
To prove \eqref{eq:abel-bounds}, consider
\(
H=L^2(0,1),
\quad
(Af)(r)=rf(r),
\quad
(Cf)(r)=\sqrt{1-r^2}\,f(r).
\)
Since
\(
\sum_{n\geq0}(1-r^2)r^{2n}=1
\)
for \(0<r<1\), Tonelli's theorem gives
\(
\sum_{n\geq0}\lVert CA^nf\rVert^2
=
\lVert f\rVert^2.
\)
Thus the \(\N\)-system is Parseval. Because \(T\in\U\), the \(T\)-system
is exactly observable. Hence there are
constants $0<c_T\leq C_T<\infty$ such that
\begin{equation}\label{eq:abel-ae}
 c_T\lVert f\rVert^2
 \leq
 \int_0^1 F_T(r)|f(r)|^2\dd r
 \leq
 C_T\lVert f\rVert^2,
 \qquad f\in L^2(0,1).
\end{equation}
Thus
$
 c_T\leq F_T(r)\leq C_T
$
for almost every $r\in(0,1)$.

We claim that $F_T$ is continuous on $(0,1)$. Fix $0<\rho<1$.
Since $F_T$ is finite almost everywhere, there exists
$s\in(\rho,1)$ such that $F_T(s)<\infty$. Hence
$
 \sum_{t\in T}s^{2t}<\infty.
$
Therefore the series
$
 \sum_{t\in T}r^{2t}
$
converges uniformly for $0\leq r\leq\rho$, by the Weierstrass
$M$-test. It follows that $F_T$ is continuous on $(0,\rho)$.
Since $\rho<1$ was arbitrary, $F_T$ is continuous on $(0,1)$.
Consequently the almost-everywhere bounds in \eqref{eq:abel-ae}
hold for every $0<r<1$, proving \eqref{eq:abel-bounds}.

We now prove the equivalence with linear counting. Put
$r=\e^{-u/2}$ and
$L_T(u)=\sum_{t\in T}\e^{-ut}$ for $u>0$. Then
\begin{equation}\label{eq:F-L-relation}
 F_T(\e^{-u/2})=(1-\e^{-u})L_T(u).
\end{equation}
Since
$(1-\e^{-1})u\leq1-\e^{-u}\leq u$ for $0<u\leq1$,
\eqref{eq:abel-bounds} implies that there exist
$0<a\leq b<\infty$ such that
\begin{equation}\label{eq:laplace-asymp}
 \frac{a}{u}\leq L_T(u)\leq\frac{b}{u},
 \qquad 0<u\leq1.
\end{equation}

For later use, Tonelli's theorem gives
\begin{equation}\label{eq:laplace-counting}
 L_T(u)
 =
 u\int_0^\infty \e^{-us}N_T(s)\dd s,
\end{equation}
because
$
u\int_0^\infty \e^{-us}N_T(s)\dd s
=
\sum_{t\in T}u\int_t^\infty \e^{-us}\dd s
=
\sum_{t\in T}\e^{-ut}.
$

Assume first that \eqref{eq:abel-bounds} holds. For $R\geq1$,
\[
 \e^{-1}N_T(R)
 \leq
 \sum_{\substack{t\in T\\t\leq R}}\e^{-t/R}
 \leq
 L_T(1/R)
 \leq bR,
\]
so
\begin{equation}\label{eq:counting-upper-temp}
 N_T(R)\leq \e bR,
 \qquad R\geq1.
\end{equation}
Set $K=\e b$ and choose $q>1$ so large that
$K(q+1)\e^{-q}\leq a/2$. Using
\eqref{eq:laplace-counting} and \eqref{eq:counting-upper-temp},
we obtain
\begin{align*}
 L_T(1/R)
 &=
 \frac1R\int_0^{qR}\e^{-s/R}N_T(s)\dd s
 +
 \frac1R\int_{qR}^\infty\e^{-s/R}N_T(s)\dd s\\
 &\leq
 N_T(qR)
 +
 \frac K R\int_{qR}^\infty s\e^{-s/R}\dd s\\
 &=
 N_T(qR)+KR(q+1)\e^{-q}
 \leq
 N_T(qR)+\frac a2R.
\end{align*}
On the other hand,
\eqref{eq:laplace-asymp} gives $L_T(1/R)\geq aR$.
Hence $N_T(qR)\geq aR/2$, and therefore
$N_T(R)\geq aR/(2q)$ for all sufficiently large $R$.
Together with \eqref{eq:counting-upper-temp}, this proves
\eqref{eq:linear-counting}.

Conversely, assume \eqref{eq:linear-counting}. Enlarging $R_0$ if
necessary, suppose that $R_0\geq1$. By local finiteness and the upper
bound in \eqref{eq:linear-counting}, there is $K<\infty$ such that
\begin{equation}\label{eq:global-counting-upper}
 N_T(s)\leq K(1+s),
 \qquad s\geq0.
\end{equation}
It follows from \eqref{eq:laplace-counting} that
\begin{equation}\label{eq:laplace-global-upper}
 L_T(u)
 \leq
 K u\int_0^\infty \e^{-us}(1+s)\dd s
 =
 K\left(1+\frac1u\right).
\end{equation}
If $0<u\leq R_0^{-1}$, then
\[
 L_T(u)
 \geq
 c_T'u\int_{R_0}^\infty s\e^{-us}\dd s
 =
 c_T'\e^{-uR_0}\left(R_0+\frac1u\right)
 \geq
 \frac{c_T'\e^{-1}}{u},
\]
whereas \eqref{eq:laplace-global-upper} gives
$L_T(u)\leq2K/u$. Hence, by \eqref{eq:F-L-relation},
$F_T(\e^{-u/2})$ is bounded above and below by positive constants
for $0<u\leq R_0^{-1}$.

It remains to consider $u\geq R_0^{-1}$. Since $0\in T$,
$L_T(u)\geq1$, and hence
\[
 1-\e^{-1/R_0}
 \leq
 F_T(\e^{-u/2})
 \leq
 L_T(u)
 \leq
 K(1+R_0).
\]
Thus \eqref{eq:abel-bounds} holds for every $0<r<1$.
\end{proof}

Thus $N_T(R)\asymp R$ is a necessary condition for universality, but it
captures only the scalar ($1\times1$) level of the complete Stein--Pick
criterion and is not sufficient in general.

Here and below, ``commuting'' means $[A,C^*C]=0$, since commutation of
$A$ and $C$ is not defined in general when $C:H\to Y$.

Let $B=C^*C$ and assume $[A,B]=0$. In a spectral direct-integral
representation,
\[
 H=\int_{[0,1]}^\oplus H_\lambda\dd\mu(\lambda),
 \qquad
 A=M_\lambda,
 \qquad
 B=M_{W(\lambda)},
\]
where $W(\lambda)\geq0$. Set
$S_T(\lambda)=\sum_{t\in T}\lambda^{2t}$ and
$F_T(\lambda)=(1-\lambda^2)S_T(\lambda)$.

\begin{definition}
Let $\Ucom$ denote the class of locally finite sets $T\subset\Rplus$
such that, for every positive operator $A$ and every observation operator
$C$ satisfying $[A,C^*C]=0$, exact observability at integer times implies
exact observability at the times in $T$.
\end{definition}

\begin{theorem}
\label{thm:commuting-universal}
Let $T\subset\Rplus$ be locally finite. Then
$
 T\in\Ucom$ if and only if
 $
 0\in T$
 and
 $N_T(R)\asymp R$
 as $R\to\infty.
$
\end{theorem}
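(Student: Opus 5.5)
Necessity is already contained in the proof of Theorem~\ref{thm:abel-counting}. Both test systems used there commute: \(A=0\), \(C=\Id\) gives \(0\in T\), and the multiplication model on \(L^2(0,1)\) has \(A\) and \(C^*C\) both multiplication operators. So the same argument, run within \(\Ucom\), gives \(0\in T\) and \eqref{eq:abel-bounds}. The equivalence in Theorem~\ref{thm:abel-counting} then yields \(N_T(R)\asymp R\).

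For sufficiency, assume \(0\in T\) and \(N_T(R)\asymp R\). By Theorem~\ref{thm:abel-counting}, \(c_T\leq F_T(r)\leq C_T\) for \(0<r<1\). Let \((A,C)\) be commuting with \(a\Id\leq G_{\N}\leq b\Id\), and set \(B=C^*C\). By Lemma~\ref{lem:auto-contraction}, \(0\leq A\leq\Id\) and \(E_A(\{1\})=0\). Since \(B\) commutes with \(A\), it commutes with every bounded Borel function of \(A\), in particular with all powers \(A^t\) and all spectral projections. In the direct-integral picture, \(G_{T,m}\) is therefore the decomposable operator \(M_{W(\lambda)\sum_{k\leq m}\lambda^{2\tau_k}}\), with no cross terms in \eqref{eq:double-spectral-gramian}. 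Similarly, \(G_{\N}=M_{W(\lambda)/(1-\lambda^2)}\) for \(\mu\)-a.e.\ \(\lambda<1\); this is the Stein identity \(B=G_{\N}-AG_{\N}A=(1-A^2)G_{\N}\), which holds because \(G_{\N}\) also commutes with \(A\). The bounds \(a\Id\leq G_{\N}\leq b\Id\) then say \(a\leq W(\lambda)/(1-\lambda^2)\leq b\) a.e. as operators on \(H_\lambda\).

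Fiberwise, the partial sums increase to
\[
W(\lambda)S_T(\lambda)=F_T(\lambda)\,\frac{W(\lambda)}{1-\lambda^2}.
\]
The point \(\lambda=0\) needs care. With the convention \(0^0=1\) and \(0\in T\), the sum \(S_T(0)\) equals \(1\), and \(F_T(0)=1\) also lies in a bounded positive range once we take \(c_T\leq1\leq C_T\). Hence \(ac_T\leq W(\lambda)S_T(\lambda)\leq bC_T\) a.e. Monotone convergence of the increasing partial sums gives strong convergence of \(G_{T,m}\) to a bounded operator with \(ac_T\Id\leq G_T\leq bC_T\Id\). Lemma~\ref{lem:fixed-criterion} then gives exact observability.

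Two alternatives are available. One can avoid direct integrals by noting that \(G_{\N}\) commutes with \(A\), so \(G_T=F_T(A)G_{\N}\) with \(F_T(A)\) defined by bounded Borel functional calculus; the product of commuting positive operators, one between \(c_T,C_T\) and the other between \(a,b\), lies between \(ac_T\) and \(bC_T\). The main obstacle I expect is justifying \(G_T=F_T(A)G_{\N}\) rigorously, that is, the strong convergence of the partial sums and the commutation of \(G_{\N}\) with \(A\). For the latter, each partial sum \(\sum_{n\le m}A^{n}BA^{n}=BA^{2n}\)-type sum commutes with \(A\), and strong limits preserve commutation. For the former, \(\sum_{k\leq m}A^{2\tau_k}B\) converges strongly, since the scalar functions \((1-\lambda^2)\sum_{k\le m}\lambda^{2\tau_k}\) are uniformly bounded and increase pointwise to \(F_T\), and \(B=(1-A^2)G_{\N}\).
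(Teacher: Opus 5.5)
Your proposal is correct and follows the paper's proof. Necessity comes from the commuting test systems in the proof of Theorem~\ref{thm:abel-counting}, and sufficiency comes from writing $G_{\N}$ and $G_T$ in the direct integral as multiplication by $W(\lambda)/(1-\lambda^2)$ and $F_T(\lambda)W(\lambda)/(1-\lambda^2)$, then sandwiching with $c_T\leq F_T\leq C_T$. Your explicit treatment of a possible spectral atom at $\lambda=0$, where $F_T(0)=1$ and may be included in $[c_T,C_T]$, and your direct-integral-free variant $G_T=F_T(A)G_{\N}$ are harmless refinements of details the paper leaves implicit.
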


\begin{proof}
Suppose first that $T\in\Ucom$. By the proof of
Theorem~\ref{thm:abel-counting}, we have
$0\in T$ and $N_T(R)\asymp R$, since the test systems used there
all satisfy the commuting condition $[A,C^*C]=0$.

Conversely, suppose that $0\in T$ and $N_T(R)\asymp R$. By
Theorem~\ref{thm:abel-counting}, there exist constants
$0<c_T\leq C_T<\infty$ such that
$
 c_T\leq F_T(r)\leq C_T$ for $0<r<1$.

Let $(A,C)$ be any system that is exactly observable at integer times
and satisfies $[A,C^*C]=0$, and set $B=C^*C$. Let $a,b$ be observability bounds for the $\N$-system. By Lemma~\ref{lem:auto-contraction},
$0\leq A\leq\Id$ and $E_A(\{1\})=0$. Since $A$ and $B$ commute, we may
use a spectral direct-integral representation
\[
 H=\int_{[0,1)}^\oplus H_\lambda\dd\mu(\lambda),
 \qquad
 A=M_\lambda,
 \qquad
 B=M_{W(\lambda)},
\]
where $W(\lambda)\geq0$ almost everywhere.

The $\N$-sampled observability Gramian is therefore multiplication by
$
 \sum_{n\geq0}\lambda^{2n}W(\lambda)
 =
 \frac{W(\lambda)}{1-\lambda^2}.
$
Hence the integer observability bounds are equivalent to
$
 a\Id_{H_\lambda}
 \leq
 \frac{W(\lambda)}{1-\lambda^2}
 \leq
 b\Id_{H_\lambda}
$
for almost every $\lambda$.

On the other hand, the $T$-sampled observability Gramian is multiplication
by \(S_T(\lambda)W(\lambda)\), where
\(
S_T(\lambda)=\sum_{t\in T}\lambda^{2t}.
\)
Since
$F_T(\lambda)=(1-\lambda^2)S_T(\lambda)$, we have
\[
 S_T(\lambda)W(\lambda)
 =
 F_T(\lambda)
 \frac{W(\lambda)}{1-\lambda^2}.
\]
Combining the preceding fiberwise bounds with
$c_T\leq F_T(\lambda)\leq C_T$ gives
$
 c_Ta\Id_{H_\lambda}
 \leq
 S_T(\lambda)W(\lambda)
 \leq
 C_Tb\Id_{H_\lambda}
$
almost everywhere. Consequently,
$
 c_Ta\Id\leq G_T\leq C_Tb\Id,
$
so the $T$-system is exactly observable. Thus $T\in\Ucom$.

Finally, $\U\subseteq\Ucom$ follows directly from the definitions, since
the commuting systems form a subclass of the systems with $A\geq0$ for
which the $\N$-system is exactly observable.
\end{proof}

We next show that the passage from scalar bounds to the complete Stein--Pick criterion
is essential rather than formal. We construct a set \(T\) satisfying all
scalar Abel bounds, equivalently \(N_T(R)\asymp R\), for which the complete
matrix condition fails. In particular,
\(
\U\subsetneq\Ucom.
\) The construction is based on a self-similar scaling limit and a cancellation
identity across geometrically separated spectral scales.

\begin{lemma}
\label{lem:tangent-measure}
Fix $q=16$. For $c>0$, define
\(
H_q(c):=\sum_{n\in\mathbb Z}(-1)^n q^{n/2}\e^{-cq^n}.
\)
Then the following assertions hold.
\begin{enumerate}[
 label=\textup{(\roman*)},
 leftmargin=*,
 topsep=2pt,
 itemsep=2pt,
 parsep=0pt
]
\item
The defining series converges absolutely and locally uniformly on
$(0,\infty)$, satisfies $H_q(qc)=-q^{-1/2}H_q(c)$, and has a zero
$c_0\in(1,q)$.

\item
For any such zero $c_0$, define
\begin{equation}\label{eq:clustered-T}
 B_m
 =
 \left\{
 c_0q^m+\ell q^{-2m}:1\leq\ell\leq q^m
 \right\},
 \qquad
 T=\{0\}\cup\bigcup_{m\geq0}B_m .
\end{equation}
Then $T$ is locally finite and $N_T(R)\asymp R$ as $R\to\infty$.

\item
If $R_L=q^L$ and
$\mu_L=R_L^{-1}\sum_{t\in T}\delta_{t/R_L}$, then
\begin{equation}\label{eq:tangent-nu}
 \mu_L
 \xrightarrow[L\to\infty]{\,v\,}
 \nu
 :=
 \sum_{n\in\mathbb Z}q^n\delta_{c_0q^n}
 \qquad\text{on }(0,\infty).
\end{equation}
Moreover, for every $a>0$,
\begin{equation}\label{eq:laplace-tangent}
 \frac1{R_L}\sum_{t\in T}\e^{-at/R_L}
 \longrightarrow
 \sum_{n\in\mathbb Z}q^n\e^{-ac_0q^n}.
\end{equation}
\end{enumerate}
\end{lemma}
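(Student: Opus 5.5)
The plan is to verify the three assertions in order. Part (i) reduces to elementary series estimates plus one numerical sign check. Parts (ii) and (iii) follow from the explicit block structure of $T$: each block $B_m$ has $q^m$ points packed into an interval of length at most $q^{-m}$ just to the right of $c_0q^m$.

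\emph{Part (i).} For absolute and locally uniform convergence, restrict to $c\in[c_1,c_2]\subset(0,\infty)$.
\begin{itemize}[leftmargin=*,topsep=2pt,itemsep=2pt,parsep=0pt]
\item For $n\geq0$, bound the terms by $q^{n/2}\e^{-c_1q^n}$, which decays superexponentially.
\item For $n<0$, bound them by $q^{n/2}$, a convergent geometric series.
\end{itemize}
The Weierstrass $M$-test then gives continuity of $H_q$ on $(0,\infty)$. The functional equation follows from the index shift $m=n+1$:
\[
H_q(qc)=\sum_{n}(-1)^nq^{n/2}\e^{-cq^{n+1}}=\sum_m(-1)^{m-1}q^{(m-1)/2}\e^{-cq^m}=-q^{-1/2}H_q(c).
\]
Consequently $H_q(1)$ and $H_q(q)$ have opposite signs as soon as $H_q(1)\neq0$, and the intermediate value theorem gives a zero in $(1,q)$. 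It therefore remains to show $H_q(1)\neq0$, which I expect to be the only genuinely delicate step. I would do it numerically with $q=16$.
\begin{itemize}[leftmargin=*,topsep=2pt,itemsep=2pt,parsep=0pt]
\item The $n=0$ term is $\e^{-1}\approx0.368$.
\item The $n=-1$ term is $-\tfrac14\e^{-1/16}\approx-0.235$.
\item The $n=-2$ term is $\tfrac1{16}\e^{-1/256}\approx0.062$.
\item The terms with $n\geq1$ total less than $10^{-6}$ in absolute value.
\item The terms with $n\leq-3$ form an alternating series with decreasing moduli, so their sum is bounded by the first of them, about $0.016$.
\end{itemize}
This gives $H_q(1)>0.17>0$.

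\emph{Part (ii).} Each block satisfies $B_m\subset(c_0q^m,\,c_0q^m+q^{-m}]$. Since $c_0q^m+1<c_0q^{m+1}$, the blocks are disjoint and ordered. Only finitely many blocks meet any bounded interval, so $T$ is locally finite. Given large $R$, let $M$ be the largest index with $c_0q^M+1\leq R$. Then the blocks $B_0,\dots,B_M$ are fully counted and at most $B_{M+1}$ is partially counted, so
\[
q^M\leq N_T(R)\leq 1+\sum_{m\leq M+1}q^m\leq 1+\tfrac{q}{q-1}q^{M+1}.
\]
On the other side, maximality of $M$ gives $c_0q^{M+1}+1>R\geq c_0q^M$. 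Combining the two displays yields $N_T(R)\asymp R$.

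\emph{Part (iii).} I would prove \eqref{eq:laplace-tangent} directly and treat the vague convergence the same way. Write $n=m-L\geq-L$. For $t\in B_m$ we have $c_0q^n\leq t/R_L\leq c_0q^n+q^{-2m-L}\leq c_0q^n+q^{-L}$. Hence
\[
q^{n}\e^{-a(c_0q^n+q^{-L})}\leq R_L^{-1}\sum_{t\in B_m}\e^{-at/R_L}\leq q^n\e^{-ac_0q^n}.
\]
The point $0\in T$ contributes $R_L^{-1}\to0$. Summing over $n\geq-L$, each term converges to $q^n\e^{-ac_0q^n}$ as $L\to\infty$. The terms are dominated by $q^n\e^{-ac_0q^n}$, which is summable over $\mathbb Z$. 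Dominated convergence therefore gives \eqref{eq:laplace-tangent}.

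For \eqref{eq:tangent-nu}, take $\varphi\in C_c(0,\infty)$ supported in $[\delta,\Delta]$. Only the finitely many $n$ with $c_0q^n\in[\delta/2,2\Delta]$ contribute, and these satisfy $n\geq-L$ once $L$ is large. The $q^{n+L}$ points of $B_{n+L}$, rescaled by $R_L$, lie within $q^{-L}$ of $c_0q^n$. Uniform continuity of $\varphi$ then gives
\[
\int\varphi\dd\mu_L=\sum_n q^n\varphi(c_0q^n)+o(1),
\]
which is exactly $\int\varphi\dd\nu+o(1)$.
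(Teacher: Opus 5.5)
Your proposal is correct and follows essentially the paper's route: the $M$-test and reindexing for (i), block counting for (ii), and termwise convergence with a summable dominating sequence for (iii). In (i) you check $H_q(1)>0$ numerically term by term, whereas the paper pairs the $n$ and $-n$ terms into a single alternating series; both rest on the same alternating-series bound. One small slip in (iii): for $t\in B_m$ the offset is $\ell q^{-2m-L}\le q^{-m-L}$, not $q^{-2m-L}$, since $\ell$ runs up to $q^m$. The bound $q^{-L}$ that you actually use is unaffected.
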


\begin{proof}
For \textup{(i)}, fix $0<a<b<\infty$. For $c\in[a,b]$,
\(
 \sum_{n\geq0}q^{n/2}\e^{-cq^n}
 \leq
 \sum_{n\geq0}q^{n/2}\e^{-aq^n}
 <\infty,
 \
 \sum_{n<0}q^{n/2}\e^{-cq^n}
 \leq
 \sum_{n<0}q^{n/2}
 <\infty.
\)
Hence the defining series converges absolutely and uniformly on $[a,b]$,
and therefore locally uniformly on $(0,\infty)$.

Reindexing gives
\[
 H_q(qc)
 =
 \sum_{n\in\mathbb Z}(-1)^nq^{n/2}\e^{-cq^{n+1}}
 =
 \sum_{m\in\mathbb Z}(-1)^{m-1}q^{(m-1)/2}\e^{-cq^m}
 =
 -q^{-1/2}H_q(c).
\]

At $c=1$, put
$u_k=q^{-k/2}\e^{-q^{-k}}$ and
$v_k=q^{k/2}\e^{-q^k}$ for $k\geq1$. Since $q=16$,
\(
 \frac{u_{k+1}}{u_k}
 =
 q^{-1/2}
 e^{((1-q^{-1})q^{-k})}
 <1,
 \
 \frac{v_{k+1}}{v_k}
 =
 q^{1/2}
 e^{(-(q-1)q^k)}
 <1.
\)
Thus $(u_k+v_k)_{k\geq1}$ is decreasing, and the alternating series
estimate yields
\[
 H_q(1)
 =
 \e^{-1}
 +\sum_{k\geq1}(-1)^k(u_k+v_k)
 \geq
 \e^{-1}-u_1-v_1
 =
 \e^{-1}
 -q^{-1/2}\e^{-1/q}
 -q^{1/2}\e^{-q}
 >0.
\]
Consequently $H_q(q)=-q^{-1/2}H_q(1)<0$. By continuity, there exists
$c_0\in(1,q)$ such that $H_q(c_0)=0$.

For \textup{(ii)}, we have $\#B_m=q^m$ and
$B_m\subset[c_0q^m,c_0q^m+q^{-m}]$. Moreover,
\(
 c_0q^{m+1}
 -
 \bigl(c_0q^m+q^{-m}\bigr)
 =
 c_0(q-1)q^m-q^{-m}
 >0,
\)
so the clusters are pairwise disjoint and ordered. Since
$c_0q^m\to\infty$, the set $T$ is locally finite.

Let $b_m:=\min B_m=c_0q^m+q^{-2m}$. For sufficiently large $R$, choose
$M$ such that $b_M\leq R<b_{M+1}$. Then
\(
 1+\sum_{m=0}^{M-1}q^m
 \leq
 N_T(R)
 \leq
 1+\sum_{m=0}^{M}q^m,
\)
while $c_0q^M\leq R<c_0q^{M+1}+q^{-2M-2}$. Hence
$N_T(R)\asymp q^M\asymp R$.

For \textup{(iii)}, let $\varphi\in C_c(0,\infty)$ and choose
$0<\alpha<\beta<\infty$ such that
$\supp\varphi\subset[\alpha,\beta]$. Writing $m=L+n$, the contribution
of $B_{L+n}$ is
\(
 I_{L,n}
 :=
 \frac1{q^L}
 \sum_{\ell=1}^{q^{L+n}}
 \varphi\!\left(
 c_0q^n+\ell q^{-3L-2n}
 \right).
\)
For each fixed $n$,
$0\leq\ell q^{-3L-2n}\leq q^{-2L-n}\to0$ uniformly for
$1\leq\ell\leq q^{L+n}$, while $q^{L+n}/q^L=q^n$. Therefore
$I_{L,n}\to q^n\varphi(c_0q^n)$.

Since $n\geq-L$, we have $q^{-2L-n}\leq q^{-L}$. For all sufficiently
large $L$, the condition $I_{L,n}\neq0$ therefore implies
$\alpha/2\leq c_0q^n\leq\beta$. Thus only finitely many values of $n$
occur, independently of $L$. Hence
\(
 \int\varphi\,\dd\mu_L
 \longrightarrow
 \sum_{n\in\mathbb Z}q^n\varphi(c_0q^n)
 =
 \int\varphi\,\dd\nu,
\)
which proves \eqref{eq:tangent-nu}.

Finally,
\(
 \frac1{q^L}\sum_{t\in T}\e^{-at/q^L}
 =
 q^{-L}
 +
 \sum_{n\geq-L}J_{L,n},
\)
where
\(
 J_{L,n}
 :=
 \frac1{q^L}
 \sum_{\ell=1}^{q^{L+n}}
 e^{(
 -a(c_0q^n+\ell q^{-3L-2n})
 )}.
\)
For each fixed $n$, $J_{L,n}\to q^n\e^{-ac_0q^n}$. Moreover,
\(
 \sum_{\substack{n<-K\\ n\geq-L}}J_{L,n}
 \leq
 \sum_{n<-K}q^n,
 \
 \sum_{n>K}J_{L,n}
 \leq
 \sum_{n>K}q^n\e^{-ac_0q^n}.
\)
Thus
\(
 \lim_{K\to\infty}\sup_L
 \sum_{\substack{n<-K\\ n\geq-L}}J_{L,n}
 =0,
 \
 \lim_{K\to\infty}\sup_L
 \sum_{n>K}J_{L,n}
 =0.
\)
Since $q^{-L}\to0$, truncation in $n$ gives
\(
 \frac1{R_L}\sum_{t\in T}\e^{-at/R_L}
 \longrightarrow
 \sum_{n\in\mathbb Z}q^n\e^{-ac_0q^n},
\)
which proves \eqref{eq:laplace-tangent}.
\end{proof}

For $i\in\mathbb Z$, set
$a_i=q^{-i}$ and
$\varphi_i(s)=\sqrt{2a_i}\,\e^{-a_i s}$ for $s>0$.

\begin{lemma}
\label{lem:continuous-riesz}
The following assertions hold.
\begin{enumerate}[
 label=\textup{(\roman*)},
 leftmargin=*,
 topsep=2pt,
 itemsep=2pt,
 parsep=0pt
]
\item
There exist constants \(0<m_q\leq M_q<\infty\), depending only on \(q\),
such that every finitely supported scalar sequence \((x_i)\) satisfies
\[
m_q\sum_i|x_i|^2
\leq
\int_0^\infty
\left|\sum_i x_i\varphi_i(s)\right|^2\dd s
\leq
M_q\sum_i|x_i|^2.
\]

\item
Sampling this family against the limit measure \(\nu\) in
\eqref{eq:tangent-nu} defines, initially for finitely supported sequences
\(x=(x_i)_{i\in\mathbb Z}\),
\begin{equation}\label{eq:M-operator}
(\mathcal Mx)_n
=
q^{n/2}\sum_i x_i\varphi_i(c_0q^n)
=
\sqrt{2}\sum_i
x_iq^{(n-i)/2}\e^{-c_0q^{\,n-i}}.
\end{equation}
The operator \(\mathcal M\) extends boundedly to
\(\ell^2(\mathbb Z)\). Moreover, if
\[
x_i^{(M)}
=
\begin{cases}
\dfrac{(-1)^i}{\sqrt{2M+1}}, & |i|\leq M,\\[1ex]
0, & |i|>M,
\end{cases}
\qquad i\in\mathbb Z,
\]
then
\[
\|\mathcal Mx^{(M)}\|_{\ell^2(\mathbb Z)}
\longrightarrow0
\qquad (M\to\infty).
\]
\end{enumerate}
\end{lemma}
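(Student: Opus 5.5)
For part (i), compute the Gram matrix of the family \((\varphi_i)\) in \(L^2(0,\infty)\):
\[
\langle\varphi_i,\varphi_j\rangle
=
2\sqrt{a_ia_j}\int_0^\infty \e^{-(a_i+a_j)s}\dd s
=
\frac{2\sqrt{a_ia_j}}{a_i+a_j}
=
\sech\Bigl(\tfrac{(i-j)\log q}{2}\Bigr).
\]
This is a Toeplitz (Laurent) matrix with symbol
\[
\sigma(\theta)=\sum_{k\in\mathbb Z}\sech\bigl(\tfrac{k\log q}{2}\bigr)\e^{ik\theta}.
\]
The coefficients decay like \(2q^{-|k|/2}\), so the symbol is continuous, and Schur's test gives the upper bound \(M_q=\sup\sigma\). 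For the lower bound it suffices to show \(\min_\theta\sigma(\theta)>0\). The cheapest route is diagonal dominance: \(\sigma(\theta)\geq 1-2\sum_{k\geq1}\sech(k\log 2)\), since \(\sqrt q=4\). Here \(\sech(k\log2)=2/(2^k+2^{-k})\), and
\[
\sum_{k\ge1}\frac{2}{2^k+2^{-k}}\approx 0.8+0.47+0.25+0.125+\cdots\approx 1.77,
\]
so \(1-2\cdot1.77<0\) and diagonal dominance fails. So the lower bound needs an actual positivity argument for the symbol. The plan is to use the exact Fourier transform of \(\sech\): by Poisson summation,
\[
\sigma(\theta)=\frac{2\pi}{\log q}\sum_{m\in\mathbb Z}\sech\Bigl(\frac{\pi(\theta+2\pi m)}{\log q}\Bigr),
\]
a sum of strictly positive terms. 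Hence \(\sigma\) is continuous, positive and \(2\pi\)-periodic, so \(m_q=\min\sigma>0\). An alternative is the Mellin-transform view: the \(\varphi_i\) are dilates of a single function, and positivity of the periodized \(|\hat\varphi|^2\) on the multiplicative group gives the same conclusion. Either way the Riesz bounds are \(m_q=\min\sigma\) and \(M_q=\max\sigma\). This step is the main obstacle: one must justify the Poisson summation carefully (the decay of \(\sech\) and its transform makes it standard) and check the normalization constant.

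For part (ii), \(\mathcal M\) is a Laurent operator \((\mathcal Mx)_n=\sum_i g(n-i)x_i\) with kernel
\[
g(k)=\sqrt2\,q^{k/2}\e^{-c_0q^k}.
\]
As \(k\to-\infty\) this is bounded by \(\sqrt2\,q^{-|k|/2}\), and as \(k\to+\infty\) it decays superexponentially. So \(g\in\ell^1(\mathbb Z)\), and Young's inequality gives boundedness on \(\ell^2(\mathbb Z)\), with norm at most \(\|g\|_1\).

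For the vanishing along \(x^{(M)}\), observe that
\[
\sum_k g(k)(-1)^{k}=\sqrt2\,H_q(c_0)=0
\]
by Lemma~\ref{lem:tangent-measure}(i), and this sum converges absolutely. Write \((-1)^i=(-1)^n(-1)^{n-i}\). For \(|n|\leq M\), the entry \((\mathcal Mx^{(M)})_n\) equals \((2M+1)^{-1/2}(-1)^n\) times the truncated sum \(\sum_{|n-k|\leq M}g(k)(-1)^k\). Since the full sum is zero, this truncated sum equals minus the tail \(\sum_{k\notin[n-M,n+M]}g(k)(-1)^k\). Call the bound on this tail \(\epsilon(n,M)\); it is at most
\[
\sum_{k<n-M}|g(k)|+\sum_{k>n+M}|g(k)|
\lesssim q^{-(M-n)/2}+q^{-(M+n)/2}
\]
(the second term is even smaller, thanks to the superexponential decay).

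Then split the norm into two parts. For \(|n|\leq M\), the contribution is
\[
\frac1{2M+1}\sum_{|n|\le M}\epsilon(n,M)^2
\lesssim
\frac1{2M+1}\sum_{j\geq0}q^{-j}
=O(1/M).
\]
For \(|n|>M\), bound \(|(\mathcal Mx^{(M)})_n|\) by \((2M+1)^{-1/2}\sum_{|i|\le M}|g(n-i)|\). Summing the squares over \(|n|>M\) again gives \(O(1/M)\), because \(\sum_{d\geq1}\bigl(\sum_{j\geq d}|g(\pm j)|\bigr)^2<\infty\) by the geometric decay. Hence \(\|\mathcal Mx^{(M)}\|^2=O(1/M)\to0\).
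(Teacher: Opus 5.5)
Your proposal is correct. Part (i) and the boundedness of \(\mathcal M\) follow the paper. The Gram matrix is the Laurent matrix with entries \(\sech(k\log q/2)\), and Poisson summation with the Fourier transform of \(\sech\) writes its symbol as a sum of strictly positive terms. Boundedness then comes from \(\ell^1\) decay of the kernel and Young's inequality. There is one harmless slip in your discarded diagonal-dominance aside: \(\sech(k\log q/2)=\sech(k\log 4)\), not \(\sech(k\log 2)\). With the correct value the off-diagonal total is about \(2\times 0.64\approx 1.27>1\), so dominance still fails and nothing downstream changes.

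You depart from the paper in proving \(\|\mathcal Mx^{(M)}\|\to0\). The paper works on the Fourier side. It writes \(\|\mathcal Mx^{(M)}\|^2=\frac{1}{2\pi}\int|\widehat h|^2|\widehat{x^{(M)}}|^2\dd\theta\) and recognizes \(\frac{1}{2\pi}|\widehat{x^{(M)}}|^2\) as the Fej\'er kernel centred at \(\pi\). It then concludes from continuity of \(\widehat h\) and \(\widehat h(\pi)=\sqrt2\,H_q(c_0)=0\).

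You stay on the sequence side. The identity \(\sum_k(-1)^kg(k)=0\) turns each entry with \(|n|\leq M\) into minus a tail of the alternating kernel sum. The two-sided geometric bound \(|g(k)|\lesssim q^{-|k|/2}\) then controls the boundary layer of those entries, and also the entries with \(|n|>M\). Your estimates check out, and they give the explicit rate \(\|\mathcal Mx^{(M)}\|^2=O(1/M)\).

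The paper's argument needs only \(h\in\ell^1\), that is, continuity of the symbol. It also makes visible that the obstruction is a zero of the symbol at \(\theta=\pi\), which parallels the symbol calculus used in part (i). Your argument is more elementary and quantitative, but it depends on the exponential decay of the kernel in both directions.
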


\begin{proof}
A direct computation gives
\(
\langle\varphi_j,\varphi_i\rangle
=
\frac{2\sqrt{a_i a_j}}{a_i+a_j}
=
\sech\!\left(\frac{(i-j)\log q}{2}\right).
\)
Hence the Gram operator is convolution on $\ell^2(\mathbb Z)$ by
$g_k=\sech(k\log q/2)$, where $g\in\ell^1(\mathbb Z)$.

Put $\alpha=(\log q)/2$. Using the Fourier transform convention
\(
\widehat f(\xi)
=
\int_{\mathbb R}f(x)\e^{-\mathrm i\xi x}\dd x,
\)
we have
\(
\int_{\mathbb R}\sech(\alpha x)\e^{-\mathrm i\xi x}\dd x
=
\frac{\pi}{\alpha}
\sech\!\left(\frac{\pi\xi}{2\alpha}\right).
\)
Poisson summation therefore gives the Fourier symbol
\[
G_q(\theta)
=
\sum_{k\in\mathbb Z}g_k\e^{-\mathrm i k\theta}
=
\frac{\pi}{\alpha}
\sum_{\ell\in\mathbb Z}
\sech\!\left(
\frac{\pi(\theta+2\pi\ell)}{2\alpha}
\right).
\]
The series on the right converges uniformly in $\theta$, so $G_q$ is
continuous. Since every summand is positive, $G_q(\theta)>0$ for all
$\theta$. Thus
\(
0<m_q:=\min_{\theta}G_q(\theta)
\leq
\max_{\theta}G_q(\theta)=:M_q
<\infty,
\)
and Parseval's identity gives the asserted upper and lower estimate.

By \eqref{eq:M-operator}, $\mathcal M$ is convolution by
\(
h_k=\sqrt{2}\,q^{k/2}\e^{-c_0q^k}.
\)
Since $h\in\ell^1(\mathbb Z)$, Young's inequality shows that
$\mathcal M$ extends boundedly to $\ell^2(\mathbb Z)$. Its Fourier symbol
satisfies
\[
\widehat h(\pi)
=
\sqrt{2}\sum_{k\in\mathbb Z}
(-1)^k q^{k/2}\e^{-c_0q^k}
=
\sqrt{2}\,H_q(c_0)
=
0.
\]

For $y\in\ell^1(\mathbb Z)$, write
\(
\widehat y(\theta)
=
\sum_{k\in\mathbb Z}y_k\e^{-\mathrm i k\theta}.
\)
Parseval's identity gives
\[
\|\mathcal Mx^{(M)}\|_2^2
=
\frac{1}{2\pi}
\int_{-\pi}^{\pi}
|\widehat h(\theta)|^2
|\widehat{x^{(M)}}(\theta)|^2
\dd\theta.
\]
Moreover,
\(
\frac{1}{2\pi}
|\widehat{x^{(M)}}(\theta)|^2
=
\frac{1}{2\pi(2M+1)}
\left|
\sum_{j=-M}^{M}
\e^{-\mathrm i j(\theta-\pi)}
\right|^2,
\)
which is the normalized Fej\'er kernel centered at $\pi$ on the circle.
Hence the measures
\(
\frac{1}{2\pi}
|\widehat{x^{(M)}}(\theta)|^2\dd\theta
\)
form an approximate identity at $\pi$. Since $\widehat h$ is continuous
and $\widehat h(\pi)=0$, it follows that
\(
\|\mathcal Mx^{(M)}\|_2\to0
\)
as \(M\to\infty\).
\end{proof}

\begin{theorem}
\label{thm:mellin-counterexample}
The set $T$ defined in \eqref{eq:clustered-T} satisfies
\(
0\in T,
\
N_T(R)\asymp R,
\
(1-r^2)\sum_{t\in T}r^{2t}\asymp1\) for \(0<r<1,\)
but $T\notin\U$.
\end{theorem}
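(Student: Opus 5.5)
The first three properties are already available. By construction \(0\in T\), and Lemma~\ref{lem:tangent-measure}(ii) gives \(N_T(R)\asymp R\). Theorem~\ref{thm:abel-counting} then gives the two-sided Abel bound for \(F_T\).

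To prove \(T\notin\U\), we will contradict the complete bound preservation property. By Theorem~\ref{thm:complete-characterization}, it suffices to find, for fixed \(0<a\leq b\), finite radial Stein--Pick data with \(a\Id\leq S\leq b\Id\) and \(D_\lambda(S)\geq0\) such that \(\lambda_{\min}(M_T^\lambda(S))\to0\). It is convenient to argue through systems: each finite model is the system \(A=\diag(\lambda_i)\), \(B=D_\lambda(S)\) of Lemma~\ref{lem:finite-spectral-identity}, with \(G_{\N}=S\) and \(G_T=M_T^\lambda(S)\).

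\textbf{The finite models.} Fix \(L\) and \(M\) with \(M\ll L\). Take scalar spectral points \(\lambda_i=\e^{-a_i/(2R_L)}\) for \(|i|\leq M\), where \(a_i=q^{-i}\). Take \(S=D_\lambda^{-1}(P)\) with \(P\) the rank-one matrix \(ww^*\) and \(w_i=\sqrt{1-\lambda_i^2}\sim\sqrt{a_i/R_L}\). This means \(B=C^*C\) with \(C\) the row \((w_i)\). Then
\[
S_{ij}=\frac{w_iw_j}{1-\lambda_i\lambda_j}\approx\frac{2\sqrt{a_ia_j}}{a_i+a_j}=\langle\varphi_j,\varphi_i\rangle,
\]
which is the truncated \(\sech\) Gram matrix. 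By Lemma~\ref{lem:continuous-riesz}(i) it satisfies \(m_q\Id\leq S\leq M_q\Id\) up to an error that vanishes as \(L\to\infty\) for fixed \(M\); after rescaling, the bounds \(a,b\) become fixed constants independent of \(M\) and \(L\). The condition \(D_\lambda(S)=ww^*\geq0\) holds by construction.

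\textbf{The sampled quadratic form.} The sampled Gramian is
\[
(M_T^\lambda(S))_{ij}=w_iw_j\sum_{t\in T}(\lambda_i\lambda_j)^t .
\]
Its quadratic form at \(x\) is
\[
\sum_{t\in T}\Bigl|\sum_i x_iw_i\lambda_i^{2t}\Bigr|^2
=\sum_{t\in T}\Bigl|\sum_i x_i\sqrt{a_i/R_L}\,\e^{-a_it/R_L}\Bigr|^2(1+o(1)).
\]
Up to the factor \(1/2\), the right-hand side is \(\int|\sum_ix_i\varphi_i|^2\dd\mu_L\). Each summand is built from the functions \(\e^{-(a_i+a_j)s}\), so the convergence \eqref{eq:laplace-tangent}, applied with \(a=a_i+a_j\), shows that as \(L\to\infty\) with \(M\) fixed the form converges to
\[
\int\Bigl|\sum_ix_i\varphi_i\Bigr|^2\dd\nu=\tfrac12\|\mathcal Mx\|^2 .
\]
This uses \(\nu(\{c_0q^n\})=q^n\) and \eqref{eq:M-operator}. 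Since only finitely many indices \(i\) are involved, the finite expansion is harmless. Now choose \(x=x^{(M)}\), which is a unit vector. Lemma~\ref{lem:continuous-riesz}(ii) gives \(\|\mathcal Mx^{(M)}\|\to0\). First choose \(M\) large, then \(L\) large, to obtain \(\langle M_T^\lambda(S)x,x\rangle<\varepsilon\) while \(S\) keeps uniform bounds. This contradicts \eqref{eq:complete-preservation}, so \(T\notin\U\).

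\textbf{Main obstacle.} The delicate step is controlling the approximations uniformly. The \(\lambda_i\) must satisfy \(\lambda_i<1\), and the replacement of \(1-\lambda_i\lambda_j\) by \((a_i+a_j)/(2R_L)\) must be accurate to relative error \(o(1)\) for all \(|i|\leq M\). This requires \(q^{M}/R_L\to0\), which holds if \(L\to\infty\) after \(M\) is fixed. The finite number of entries then makes all limits entrywise, hence in operator norm.
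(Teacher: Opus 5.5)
Your overall plan matches the paper's: diagonal models with $a_i=q^{-i}$ at scale $R_L=q^L$, a rank-one observation, the $\sech$ Gram matrix as integer Gramian, and the alternating vectors $x^{(M)}$. Contradicting \eqref{eq:complete-preservation} via Theorem~\ref{thm:complete-characterization}, instead of building the direct sum by hand, is a legitimate shortcut. However, your spectral points are off by a factor of two, and this destroys the cancellation the construction depends on.

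With $\lambda_i=\e^{-a_i/(2R_L)}$ you correctly write $(M_T^\lambda(S))_{ij}=w_iw_j\sum_{t\in T}(\lambda_i\lambda_j)^t$. Its quadratic form, however, is $\sum_{t\in T}\bigl|\sum_i x_iw_i\lambda_i^{t}\bigr|^2$ with $\lambda_i^t=\e^{-a_it/(2R_L)}$, not $\lambda_i^{2t}$. So \eqref{eq:laplace-tangent} must be applied with $a=(a_i+a_j)/2$, and for fixed $M$ the limit as $L\to\infty$ is
\[
\sum_{n\in\mathbb Z}q^n\Bigl|\sum_i x_i\sqrt{a_i}\,\e^{-c_0a_iq^n/2}\Bigr|^2=\|h'*x\|_{\ell^2(\mathbb Z)}^2,\qquad h'_k=q^{k/2}\e^{-(c_0/2)q^k}.
\]
In other words, the clusters of $T$ are probed at $c_0q^n/2$ rather than at $c_0q^n$.

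The Fej\'er argument of Lemma~\ref{lem:continuous-riesz} then gives $\|h'*x^{(M)}\|^2\to|\widehat{h'}(\pi)|^2=|H_q(c_0/2)|^2$, and this is not zero. To see this, note that $c^{1/2}H_q(c)$ changes sign under $\log c\mapsto\log c+\log q$. Its Fourier coefficients in $\log c$ are $\Gamma(\tfrac12-\mathrm i\pi k/\log q)/\log q$ for odd $k$. Since $|\Gamma(\tfrac12+\mathrm iy)|^2=\pi/\cosh(\pi y)$, it is a single sinusoid up to a perturbation of relative size about $3\%$. Hence its zeros are simple and form exactly the orbit $c_0q^{\mathbb Z}$, whereas $c_0/2=c_0q^{-1/4}$ lies a quarter of the way between two consecutive zeros. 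So your test vectors never make the sampled form small, and no contradiction is reached.

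The integer Gramian does not reveal the error, because $2\sqrt{a_ia_j}/(a_i+a_j)$ depends only on ratios. The sampled Gramian, by contrast, sees the absolute scale relative to the clusters, and $H_q$ vanishes only on $c_0q^{\mathbb Z}$.

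The repair is to take $\lambda_i=\e^{-a_i/R_L}$, as the paper does. Then $w_i^2\sim 2a_i/R_L$, $S$ still tends to the $\sech$ Gram matrix, and $M_T^\lambda(S)$ tends to the compression of $\mathcal M^*\mathcal M$ to the coordinates $|i|\leq M$. The form at $x$ then tends to $\|\mathcal Mx\|^2$. Note also that $\int|\sum_ix_i\varphi_i|^2\dd\nu$ equals $\|\mathcal Mx\|^2$, not $\tfrac12\|\mathcal Mx\|^2$. With this correction, your contradiction with the complete bound preservation property goes through.
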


\begin{proof}
Fix $M\in\mathbb N^+$. For $L\in\mathbb N^+$, let $H_M$ be a
$(2M+1)$-dimensional Hilbert space with orthonormal basis
$\{e_i:|i|\leq M\}$, and define
\[
A_{M,L}e_i=\e^{-a_i/q^L}e_i,
\qquad
C_{M,L}e_i=\sqrt{1-\e^{-2a_i/q^L}},
\qquad |i|\leq M,
\]
where $C_{M,L}:H_M\to\mathbb C$.

The integer observability Gramian $S_{M,L}$ has entries
\[
(S_{M,L})_{ij}
=
\frac{
\sqrt{1-\e^{-2a_i/q^L}}
\sqrt{1-\e^{-2a_j/q^L}}
}{
1-\e^{-(a_i+a_j)/q^L}
}.
\]
As $L\to\infty$,
\(
(S_{M,L})_{ij}
\rightarrow
\frac{2\sqrt{a_i a_j}}{a_i+a_j}.
\)
Since $H_M$ is finite dimensional, the entrywise convergence is convergence
in operator norm. The limit is the Gram matrix of
$\{\varphi_i:|i|\leq M\}$. Hence Lemma~\ref{lem:continuous-riesz} implies
that, for all sufficiently large $L$,
\(
\frac{m_q}{2}\Id
\leq
S_{M,L}
\leq
2M_q\Id.
\)

The \(T\)-sampled observability Gramian \(G_{M,L}\) has entries
\[
(G_{M,L})_{ij}
=
\sqrt{1-\e^{-2a_i/q^L}}
\sqrt{1-\e^{-2a_j/q^L}}
\sum_{t\in T}\e^{-(a_i+a_j)t/q^L}.
\]
By \eqref{eq:laplace-tangent},
\(
(G_{M,L})_{ij}
\rightarrow
2\sqrt{a_i a_j}
\sum_{n\in\mathbb Z}
q^n\e^{-c_0(a_i+a_j)q^n}
=:(K_M)_{ij}.
\)
Again, for fixed $M$, the convergence is in operator norm.

Identify $H_M$ with the coordinate subspace
$\operatorname{span}\{e_i:|i|\leq M\}$ of $\ell^2(\mathbb Z)$, and let
$P_M$ denote the corresponding orthogonal projection. By
\eqref{eq:M-operator},
\(
K_M=P_M\mathcal M^*\mathcal M P_M.
\)
Hence $\|K_M\|\leq\|\mathcal M\|^2$ for every $M$. Moreover,
Lemma~\ref{lem:continuous-riesz} gives
\(
\langle K_Mx^{(M)},x^{(M)}\rangle
=
\|\mathcal Mx^{(M)}\|_2^2
\rightarrow0.
\)

We may therefore choose a strictly increasing sequence $L_M\to\infty$
such that
\(
\frac{m_q}{2}\Id
\leq
S_{M,L_M}
\leq
2M_q\Id,
\
\|G_{M,L_M}-K_M\|
\leq
\frac1M.
\)
Consequently,
\(
\sup_{M\geq1}\|G_{M,L_M}\|
\leq
\|\mathcal M\|^2+1
<\infty.
\)
Since $\|x^{(M)}\|=1$,
\[
\left|
\langle G_{M,L_M}x^{(M)},x^{(M)}\rangle
-
\langle K_Mx^{(M)},x^{(M)}\rangle
\right|
\leq
\frac1M.
\]
It follows that
\(
\langle G_{M,L_M}x^{(M)},x^{(M)}\rangle
\rightarrow0.
\)

Now set
\(
H=\bigoplus_{M\geq1}H_M,
\
Y=\bigoplus_{M\geq1}\mathbb C,
\
A=\bigoplus_{M\geq1}A_{M,L_M},
\
C=\bigoplus_{M\geq1}C_{M,L_M}.
\)
Each $A_{M,L_M}$ is a positive contraction, and hence so is $A$.
Moreover,
\(
C_{M,L_M}^*C_{M,L_M}
\leq
S_{M,L_M}
\leq
2M_q\Id,
\)
so $C$ is bounded.

The integer Gramian is
$G_{\N}=\bigoplus_{M\geq1}S_{M,L_M}$ and therefore
\(
\frac{m_q}{2}\Id
\leq
G_{\N}
\leq
2M_q\Id.
\)
Thus the integer system is exactly observable.

For $x=(x_M)_{M\geq1}\in H$, Tonelli's theorem and the uniform bound on
the blocks $G_{M,L_M}$ give
\[
\sum_{t\in T}\|CA^t x\|^2
=
\sum_{M\geq1}
\langle G_{M,L_M}x_M,x_M\rangle
\leq
(\|\mathcal M\|^2+1)
\sum_{M\geq1}\|x_M\|^2.
\]
Hence the $T$-system is Bessel and
$G_T=\bigoplus_{M\geq1}G_{M,L_M}$.

Regard $x^{(M)}$ as a unit vector in the $M$th summand of $H$. Then
\(
\langle G_Tx^{(M)},x^{(M)}\rangle
=
\langle G_{M,L_M}x^{(M)},x^{(M)}\rangle
\rightarrow0.
\)
Thus $G_T$ is not bounded below, so the $T$-system fails the lower
observability bound. In particular, it is not exactly observable, and hence
$T\notin\U$.

Finally, Lemma~\ref{lem:tangent-measure} gives
$0\in T$ and $N_T(R)\asymp R$. Therefore
Theorem~\ref{thm:abel-counting} yields
\(
(1-r^2)\sum_{t\in T}r^{2t}\asymp1\) for \( 0<r<1.\)
\end{proof}

Combining Theorems~\ref{thm:commuting-universal} and
\ref{thm:mellin-counterexample}, we obtain
\(
\U\subsetneq\Ucom
=
\{T:0\in T,\ N_T(R)\asymp R\}.
\)
The obstruction exhibited by the counterexample comes from interactions
between different spectral components, which are absent from the commuting
theory. Scalar Abel bounds probe only the diagonal spectral weights, whereas
the self-similar scaling limit produces cancellation across geometrically
separated spectral scales, as detected by the identity
\(H_q(c_0)=0\). A finite orbit realization of the same obstruction, including the singly
generated case, is given in Theorem~\ref{thm:mellin-finite-orbit}.

\section{Two mechanisms for lifting scalar bounds to all matrix levels}

The counterexample in Section~3 shows that linear counting alone does not imply the complete Stein--Pick condition. We now give two independent hypotheses under
which the scalar information lifts to all matrix levels.

We first develop estimates for continuous time that are used in the natural
density mechanism.

Fix a system that is exactly observable at integer times and write
\begin{equation}\label{eq:S-bounds-density}
a\Id\leq S:=\sum_{n\geq0}A^nBA^n\leq b\Id,
\qquad
B=C^*C=S-ASA.
\end{equation}
For $0<r<1$, let $E_r=E_A([r,1])$. Given $x\in H$, set
\(
F_x(t)=\lVert CA^t x\rVert^2
\)
for \(t\geq0\), and define the continuous time observability Gramian
\(G_c\) by
\(
\langle G_cx,x\rangle
=
\int_0^\infty F_x(t)\dd t.
\)

\begin{lemma}
\label{lem:continuous-bridge}
The operator $G_c$ is bounded and positive, with
$0\leq G_c\leq b\Id$. Moreover,
$\lVert E_r(G_c-S)E_r\rVert\to0$ as $r\uparrow1$.
\end{lemma}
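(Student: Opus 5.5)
The plan is to write \(G_c\) as an average of conjugates of \(S\) by small powers of \(A\). The estimate \(0\leq G_c\leq b\Id\) then follows from \eqref{eq:S-bounds-density}, and the localization near the spectral point \(1\) follows from the continuity of \(s\mapsto A^s\) on \(E_rH\).

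\emph{Step 1: rewriting \(\langle G_cx,x\rangle\).} By Lemma~\ref{lem:auto-contraction}, \(0\leq A\leq\Id\), so \(A^t\) is defined for all \(t\geq0\) and \(A^{n+s}=A^nA^s\). Splitting \([0,\infty)\) into the intervals \([n,n+1)\) and writing \(t=n+s\) with \(s\in[0,1)\), Tonelli's theorem gives
\[
\int_0^\infty F_x(t)\dd t
=\int_0^1\sum_{n\geq0}\lVert CA^nA^sx\rVert^2\dd s
=\int_0^1\langle SA^sx,A^sx\rangle\dd s .
\]
Tonelli applies because the integrand is nonnegative and \(s\mapsto\langle SA^sx,A^sx\rangle\) is continuous on \((0,1]\). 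By \eqref{eq:S-bounds-density} and \(\lVert A^s\rVert\leq1\), the integrand lies in \([0,b\lVert x\rVert^2]\).

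\emph{Step 2: definition and bounds of \(G_c\).} The quadratic form \(x\mapsto\int_0^\infty F_x(t)\dd t\) is finite and bounded by \(b\lVert x\rVert^2\). Polarization therefore defines a bounded positive operator, namely the weak integral
\[
G_c=\int_0^1A^sSA^s\dd s ,
\]
and it satisfies \(0\leq G_c\leq b\Id\).

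\emph{Step 3: the localization estimate.} The projection \(E_r\) commutes with every \(A^s\). Hence
\[
E_r(G_c-S)E_r=\int_0^1\bigl(A^sE_rSE_rA^s-E_rSE_r\bigr)\dd s .
\]
For each \(s\), write the integrand as
\[
(A^sE_r-E_r)SA^sE_r+E_rS(A^sE_r-E_r),
\]
so its norm is at most \(2b\lVert A^sE_r-E_r\rVert\). By the spectral theorem,
\[
\lVert A^sE_r-E_r\rVert\leq\sup_{\lambda\in[r,1]}(1-\lambda^s)=1-r^s\leq1-r
\qquad\text{for }0\leq s\leq1 .
\]
Integrating over \(s\in[0,1]\) gives \(\lVert E_r(G_c-S)E_r\rVert\leq2b(1-r)\to0\) as \(r\uparrow1\).

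The only delicate point is justifying the interchange of sum and integral and the passage to the weak operator integral. Both are handled by positivity (Tonelli) and continuity of the functional calculus on \((0,1]\); the single point \(s=0\) has measure zero and causes no difficulty. No use of \(E_A(\{1\})=0\) is needed here.
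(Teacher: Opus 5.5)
Your proof is correct and has the same structure as the paper's. Both reduce to $\langle G_cx,x\rangle=\int_0^1\langle SA^sx,A^sx\rangle\dd s$ and then bound $S-A^sSA^s$ on $E_rH$ by $2b\lVert(\Id-A^s)E_r\rVert$. The only difference is how you get that identity: you apply Tonelli directly to the series $S=\sum_{n}A^nBA^n$ over unit intervals, whereas the paper telescopes $F_x(t)=Q_x(t)-Q_x(t+1)$ via the Stein identity and uses $A^tx\to0$ to remove $\int_R^{R+1}Q_x$. Your variant is slightly more direct, and your final bound $2b(1-r)$ in place of the paper's $b(-\log r)$ is harmless.
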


\begin{proof}
Set
$
Q_x(t)=\langle SA^t x,A^t x\rangle.
$
Since $A^t$ commutes with $A$ and $B=S-ASA$,
\(
F_x(t)=Q_x(t)-Q_x(t+1).
\)
Hence, for every $R>0$,
\[
\int_0^R F_x(t)\dd t
=
\int_0^1 Q_x(s)\dd s
-
\int_R^{R+1}Q_x(s)\dd s.
\]
Since $A^t x\to0$ as $t\to\infty$, the second term tends to zero.
Therefore
\(
\int_0^\infty F_x(t)\dd t
=
\int_0^1Q_x(s)\dd s
\leq
b\lVert x\rVert^2.
\)
Thus the corresponding quadratic form defines a bounded positive operator
$G_c$ satisfying $0\leq G_c\leq b\Id$.

If $x\in E_rH$, then
\[
\langle(S-G_c)x,x\rangle
=
\int_0^1
\langle(S-A^sSA^s)x,x\rangle
\dd s.
\]
Using
\(
S-A^sSA^s
=
(\Id-A^s)S+A^sS(\Id-A^s)
\)
and
$
\lVert(\Id-A^s)E_r\rVert
\leq
1-r^s
\leq
s(-\log r),
$
we obtain
\[
\lVert E_r(S-G_c)E_r\rVert
\leq
2b\int_0^1(1-r^s)\dd s
\leq
2b(-\log r)\int_0^1s\dd s
=
b(-\log r).
\]
This proves the assertion.
\end{proof}

The next estimate provides the input to the Tauberian argument with constants
independent of the dimension.

\begin{lemma}
\label{lem:weighted-variation}
For every $0<r<1$ and $x\in E_rH$,
\(
\int_0^\infty t|F_x'(t)|\dd t
\leq
\sqrt{2}\,b\lVert x\rVert^2.
\)
The constant is independent of $r$, spectral multiplicity, the observation
space $Y$, and the size of any finite matrix model.
\end{lemma}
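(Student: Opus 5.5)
The plan is to differentiate \(F_x\) by functional calculus, apply Cauchy--Schwarz in \(t\), and control each of the two resulting factors with the continuous-time bound \(G_c\leq b\Id\) of Lemma~\ref{lem:continuous-bridge}. That bound says \(\int_0^\infty\lVert CA^uw\rVert^2\dd u\leq b\lVert w\rVert^2\) for every \(w\in H\).

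First, fix \(0<r<1\) and \(x\in E_rH\). The subspace \(E_rH\) reduces \(A\), and the spectrum of \(A|_{E_rH}\) lies in \([r,1]\). On this subspace set \(\Lambda=-\log A\), defined by functional calculus, so that \(0\leq\Lambda\leq(-\log r)\Id\). Then \(t\mapsto A^tx=\e^{-t\Lambda}x\) is norm-differentiable with derivative \(-\Lambda A^tx\). Writing \(F_x(t)=\langle BA^tx,A^tx\rangle\), we get
\[
F_x'(t)=-2\,\mathrm{Re}\,\langle CA^tx,\,CA^t\Lambda x\rangle ,
\]
where we use that \(\Lambda\) commutes with \(A^t\). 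Hence \(|F_x'(t)|\leq 2\lVert CA^tx\rVert\,\lVert CA^t\Lambda x\rVert\). Cauchy--Schwarz in \(t\) then gives
\[
\int_0^\infty t|F_x'(t)|\dd t\leq 2\Bigl(\int_0^\infty\lVert CA^tx\rVert^2\dd t\Bigr)^{1/2}\Bigl(\int_0^\infty t^2\lVert CA^t\Lambda x\rVert^2\dd t\Bigr)^{1/2}.
\]
By Lemma~\ref{lem:continuous-bridge}, the first factor is at most \(\sqrt b\,\lVert x\rVert\).

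The main step is the weighted factor. I would use the identity \(t^2=2\int_0^t s\dd s\) together with Tonelli to write
\[
\int_0^\infty t^2 f(t)\dd t=2\int_0^\infty s\int_0^\infty f(s+u)\dd u\dd s,\qquad f(t)=\lVert CA^t\Lambda x\rVert^2 .
\]
Since \(f(s+u)=\lVert CA^u(A^s\Lambda x)\rVert^2\), applying Lemma~\ref{lem:continuous-bridge} with \(w=A^s\Lambda x\) bounds the inner integral by \(b\lVert A^s\Lambda x\rVert^2\). It then remains to compute
\[
2b\int_0^\infty s\,\langle \Lambda^2A^{2s}x,x\rangle\dd s .
\]
Spectrally, for \(\lambda\in[r,1]\) and \(\ell=-\log\lambda\geq0\), the scalar integral is \(\int_0^\infty s\,\ell^2\e^{-2s\ell}\dd s=\tfrac14\) when \(\ell>0\), and \(0\) when \(\ell=0\). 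So the weighted factor is at most \(\tfrac b2\lVert x\rVert^2\). The interchange of the spectral integral and the \(s\)-integral is justified by Tonelli, since the integrand is nonnegative.

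Combining the two factors gives \(2\sqrt b\cdot\sqrt{b/2}\,\lVert x\rVert^2=\sqrt2\,b\lVert x\rVert^2\). The only inputs are the operator inequality \(G_c\leq b\Id\) and a scalar spectral computation, so the constant is independent of \(r\), the spectral multiplicity, \(Y\), and the size of any finite matrix model. The point needing care is the justification of differentiation under the inner product and of the Tonelli interchanges. This is why we restrict to \(E_rH\): there \(\Lambda\) is bounded, \(F_x\) is \(C^1\), and every integrand above is nonnegative or absolutely integrable.
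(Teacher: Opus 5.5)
Your proof is correct and is essentially the paper's argument: the same bound $|F_x'(t)|\leq 2\lVert CA^tx\rVert\,\lVert CA^t\Lambda x\rVert$, the same Cauchy--Schwarz split, $G_c\leq b\Id$ for the first factor, and the same estimate $\int_0^\infty t^2\lVert CA^t\Lambda x\rVert^2\dd t\leq 2b\int_0^\infty t\lVert \Lambda A^tx\rVert^2\dd t\leq\tfrac b2\lVert x\rVert^2$ for the second. The only difference is in the middle inequality. The paper telescopes the Stein identity $\langle S\Lambda A^tx,\Lambda A^tx\rangle-\langle S\Lambda A^{t+1}x,\Lambda A^{t+1}x\rangle=\lVert CA^t\Lambda x\rVert^2$ against the weight $t^2$, whereas you write $t^2=2\int_0^t s\dd s$ and apply $G_c\leq b\Id$ to $A^s\Lambda x$; this is the same telescoping packaged through Lemma~\ref{lem:continuous-bridge}, with the small advantage that only nonnegative integrands appear, so no subtraction of possibly infinite integrals arises.
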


\begin{proof}
On $E_rH$, the operator $H_A=-\log A$ is bounded and
$A^t=\e^{-tH_A}$. Set
$f_x(t)=CA^t x$,
$f_x'(t)=-CH_AA^t x$, and
$q_x(t)=\langle SH_AA^t x,H_AA^t x\rangle$.
The Stein identity gives
\(
q_x(t)-q_x(t+1)
=
\lVert CH_AA^t x\rVert^2
=
\lVert f_x'(t)\rVert^2.
\)
Consequently,
\[
\int_0^\infty t^2\lVert f_x'(t)\rVert^2\dd t
=
\int_0^1 t^2q_x(t)\dd t
+
\int_1^\infty(2t-1)q_x(t)\dd t
\leq
2b\int_0^\infty
t\lVert H_AA^t x\rVert^2\dd t
\leq
\frac{b}{2}\lVert x\rVert^2.
\]
The last inequality follows from the spectral theorem and
\(
2\int_0^\infty th^2\e^{-2ht}\dd t
=
\frac12\) for \( h>0.\)
Lemma~\ref{lem:continuous-bridge} also gives
\(
\int_0^\infty\lVert f_x(t)\rVert^2\dd t
\leq
b\lVert x\rVert^2.
\)
Since
$|F_x'(t)|
\leq
2\lVert f_x(t)\rVert\lVert f_x'(t)\rVert$,
the Cauchy--Schwarz inequality yields
\[
\int_0^\infty t|F_x'(t)|\dd t
\leq
2
\left(
\int_0^\infty\lVert f_x(t)\rVert^2\dd t
\right)^{1/2}
\left(
\int_0^\infty t^2\lVert f_x'(t)\rVert^2\dd t
\right)^{1/2}
\leq
\sqrt{2}\,b\lVert x\rVert^2.
\]
\end{proof}

\subsection{Natural density}

Assume that $0\in T$ and
\begin{equation}\label{eq:natural-density-assumption}
N_T(R)=dR+o(R),
\qquad 0<d<\infty.
\end{equation}
Set $N_+(R)=\#(T\cap(0,R])$ and $D(R)=N_+(R)-dR$.
Since $0\in T$, we have $N_+(R)=N_T(R)-1$, and hence $D(R)=o(R)$.

\begin{theorem}
\label{thm:high-spectrum-tauberian}
Under \eqref{eq:S-bounds-density} and
\eqref{eq:natural-density-assumption}, for every $0<r<1$ the series
\(
\sum_{t\in T}E_rA^tBA^tE_r
\)
converges strongly to a bounded positive operator on $E_rH$. Denoting this
operator by $E_rG_TE_r$, we have
\(
\lim_{r\uparrow1}
\lVert E_r(G_T-dS)E_r\rVert
=
0.
\)
\end{theorem}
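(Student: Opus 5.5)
The plan is to compare, for $x\in E_rH$, the sampled sum $\sum_{t\in T}F_x(t)$ with $d\int_0^\infty F_x(t)\dd t=d\langle G_cx,x\rangle$. I would use a Stieltjes integration by parts against the discrepancy $D(R)=N_+(R)-dR$. All estimates will be on quadratic forms. Since the operators involved are self-adjoint on $E_rH$, a uniform bound $|\langle (E_rG_TE_r-dS)x,x\rangle|\leq\eta(r)\lVert x\rVert^2$ with $\eta(r)\to0$ gives the norm statement. Existence and boundedness of the series come along the way: the partial sums are increasing positive operators, so a uniform bound on their quadratic forms gives strong convergence.

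\textbf{Step 1 (Stieltjes decomposition).} For $x\in E_rH$ and finite $R$, since $0\in T$,
\[
\sum_{t\in T,\,t\leq R}F_x(t)=F_x(0)+d\int_0^RF_x(t)\dd t+\int_{(0,R]}F_x\dd D
=F_x(0)+d\int_0^RF_x\dd t+F_x(R)D(R)-\int_0^R D(t)F_x'(t)\dd t .
\]
Here $D(0)=0$. On $E_rH$ the generator $H_A=-\log A$ is bounded (with $E_A(\{1\})=0$ from Lemma~\ref{lem:auto-contraction}), so $F_x$ is smooth and the integration by parts is legitimate.

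\textbf{Step 2 (boundary terms).} First, $F_x(0)=\langle Bx,x\rangle$. Writing $B=S-ASA=(\Id-A)S+AS(\Id-A)$ gives $\lVert E_rBE_r\rVert\leq2b(1-r)$. Second, $F_x(t)\to0$, so $F_x(R)=-\int_R^\infty F_x'$ and hence $R\,F_x(R)\leq\int_R^\infty s|F_x'(s)|\dd s$. Lemma~\ref{lem:weighted-variation} makes this integral finite, so the tail tends to $0$. Since $D(R)=O(R)$, the term $F_x(R)D(R)\to0$ as $R\to\infty$.

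\textbf{Step 3 (the Tauberian term; main obstacle).} I need $\int_0^\infty|D(t)||F_x'(t)|\dd t$ to be small uniformly in $x$, not merely finite. Given $\varepsilon>0$, choose $R_\varepsilon$ with $|D(t)|\leq\varepsilon t$ for $t\geq R_\varepsilon$. The tail $t\geq R_\varepsilon$ is then at most $\varepsilon\sqrt2\,b\lVert x\rVert^2$ by Lemma~\ref{lem:weighted-variation}, uniformly in $r$, in the multiplicity and in $Y$. For the head, $|D|\leq K_\varepsilon$ on $[0,R_\varepsilon]$ by local finiteness. Then I use
\[
\int_0^\infty|F_x'|\leq2\Bigl(\int\lVert f_x\rVert^2\Bigr)^{1/2}\Bigl(\int\lVert f_x'\rVert^2\Bigr)^{1/2}.
\]
The first factor is at most $(b\lVert x\rVert^2)^{1/2}$ by Lemma~\ref{lem:continuous-bridge}. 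For the second, the Stein telescoping $q_x(t)-q_x(t+1)=\lVert f_x'(t)\rVert^2$ gives
\[
\int_0^\infty\lVert f_x'\rVert^2=\int_0^1q_x\leq b\lVert H_Ax\rVert^2\leq b(\log r)^2\lVert x\rVert^2 .
\]
So the head is at most $2K_\varepsilon b|\log r|\lVert x\rVert^2$. Letting $r\uparrow1$ and then $\varepsilon\to0$ makes the whole term small. This step is where the dimension-free weighted variation bound is essential; the head/tail splitting is the one delicate point.

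\textbf{Step 4 (conclusion).} Letting $R\to\infty$, Steps 1--3 show that the partial sums have uniformly bounded quadratic forms on $E_rH$, so $E_rG_TE_r$ exists as a strong limit. They also give
\[
|\langle E_rG_TE_rx,x\rangle-d\langle G_cx,x\rangle|\leq\eta(r,\varepsilon)\lVert x\rVert^2 .
\]
Finally, Lemma~\ref{lem:continuous-bridge} gives $\lVert E_r(G_c-S)E_r\rVert\to0$. Combining these yields $\lVert E_r(G_T-dS)E_r\rVert\to0$.
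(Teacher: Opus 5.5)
Your proposal is correct and follows the paper's proof essentially step for step: you use the same Stieltjes integration by parts against $D$, the same control of $F_x(0)$ and $D(M)F_x(M)$ via $\lVert E_rBE_r\rVert\leq 2b(1-r)$ and Lemma~\ref{lem:weighted-variation}, the same split at $R_\varepsilon$ with the tail handled by Lemma~\ref{lem:weighted-variation}, and the same final appeal to Lemma~\ref{lem:continuous-bridge}. The only difference is the bound on $[0,R_\varepsilon]$: the paper multiplies the pointwise bound $|F_x'(t)|\leq 4b(1-r)(-\log r)\lVert x\rVert^2$ by the interval length, while you bound $\int_0^\infty|F_x'|$ globally via Cauchy--Schwarz and the telescoping identity $\int_0^\infty\lVert f_x'\rVert^2=\int_0^1 q_x\leq b(\log r)^2\lVert x\rVert^2$, and both estimates vanish as $r\uparrow1$ for fixed $\varepsilon$.
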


\begin{proof}
Fix $0<r<1$ and $x\in E_rH$, and set
$f_x(t)=CA^t x$. Since $-\log A$ is bounded on $E_rH$, the function
$F_x(t)=\lVert f_x(t)\rVert^2$ is continuously differentiable there.
Stieltjes integration by parts gives, for $M>0$,
\[
\sum_{\substack{t\in T\\0<t\leq M}}F_x(t)
-
d\int_0^M F_x(t)\dd t
=
D(M)F_x(M)
-
\int_0^M D(s)F_x'(s)\dd s.
\]

By Lemma~\ref{lem:weighted-variation},
\(
\int_0^\infty s|F_x'(s)|\dd s<\infty.
\)
Since $F_x(t)\to0$ as $t\to\infty$,
\(
tF_x(t)
\leq
\int_t^\infty s|F_x'(s)|\dd s
\rightarrow0.
\)
Since $D(M)=o(M)$, it follows that
\(
|D(M)F_x(M)|
=
\frac{|D(M)|}{M}\,MF_x(M)
\rightarrow0.
\)
Letting $M\to\infty$ therefore yields
\[
\sum_{\substack{t\in T\\t>0}}F_x(t)
-
d\int_0^\infty F_x(t)\dd t
=
-
\int_0^\infty D(s)F_x'(s)\dd s.
\]

Given $\varepsilon>0$, choose $R_\varepsilon>0$ such that
$|D(s)|\leq\varepsilon s$ for $s\geq R_\varepsilon$, and set
\(
M_\varepsilon
=
\sup_{0\leq s\leq R_\varepsilon}|D(s)|.
\)
By Lemma~\ref{lem:weighted-variation},
\(
\int_{R_\varepsilon}^\infty
|D(s)F_x'(s)|\dd s
\leq
\sqrt{2}\,b\varepsilon\lVert x\rVert^2.
\)

To estimate the bounded interval, put $\delta_r=-\log r$. Since
$B=S-ASA$ and $E_r$ commutes with $A$,
\(
\lVert E_rBE_r\rVert
\leq
2b(1-r).
\)
Hence, for $t\geq0$,
\(
F_x(t)
\leq
2b(1-r)\lVert x\rVert^2.
\)
Moreover,
\(
f_x'(t)
=
-CH_AA^t x,
\
H_A=-\log A,
\)
and therefore
\(
\lVert f_x'(t)\rVert^2
=
\langle BH_AA^t x,H_AA^t x\rangle
\leq
2b(1-r)\delta_r^2\lVert x\rVert^2.
\)
Since
$|F_x'(t)|
\leq
2\lVert f_x(t)\rVert\lVert f_x'(t)\rVert$,
we obtain
\(
|F_x'(t)|
\leq
4b(1-r)\delta_r\lVert x\rVert^2.
\)
Consequently,
\[
\int_0^{R_\varepsilon}
|D(s)F_x'(s)|\dd s
\leq
4bM_\varepsilon R_\varepsilon
(1-r)\delta_r\lVert x\rVert^2.
\]

Since $0\in T$, the point $0$ contributes $F_x(0)$, and
\(
F_x(0)
\leq
2b(1-r)\lVert x\rVert^2.
\)
Combining the preceding estimates gives
\[
\left|
\sum_{t\in T}F_x(t)
-
d\int_0^\infty F_x(t)\dd t
\right|
\leq
\bigl[
2b(1-r)
+
4bM_\varepsilon R_\varepsilon(1-r)\delta_r
+
\sqrt{2}\,b\varepsilon
\bigr]
\lVert x\rVert^2.
\]

In particular, for each fixed $r$, the quadratic form
\(
x\longmapsto
\sum_{t\in T}\lVert CA^t x\rVert^2\) for \( x\in E_rH,\)
is bounded. Since its finite partial sums form an increasing family of
positive operators, the series
\(
\sum_{t\in T}E_rA^tBA^tE_r
\)
converges strongly to a bounded positive operator on $E_rH$, which we
denote by $E_rG_TE_r$.

The preceding estimate and the definition of $G_c$ imply
\[
\lVert E_r(G_T-dG_c)E_r\rVert
\leq
2b(1-r)
+
4bM_\varepsilon R_\varepsilon(1-r)\delta_r
+
\sqrt{2}\,b\varepsilon.
\]
First letting $r\uparrow1$ and then $\varepsilon\downarrow0$ gives
\(
\lVert E_r(G_T-dG_c)E_r\rVert
\rightarrow0.
\)
Combining this with Lemma~\ref{lem:continuous-bridge} yields
\(
\lVert E_r(G_T-dS)E_r\rVert
\rightarrow0\) for \(r\uparrow1,\)
as required.
\end{proof}

\begin{theorem}
\label{thm:natural-density}
Let $T\subset\Rplus$ be locally finite and suppose that the natural density
\(
d(T)=\lim_{R\to\infty}\frac{N_T(R)}{R}
\)
exists in $[0,\infty]$. Then
\(
T\in\U\) if and only if
\(
0\in T\ \text{ and }\ 0<d(T)<\infty.\)
\end{theorem}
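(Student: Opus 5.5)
The necessity direction follows from Theorem~\ref{thm:abel-counting}. If $T\in\U$, then $0\in T$ and $c_T'R\leq N_T(R)\leq C_T'R$ for large $R$. Whenever the limit $d(T)$ exists in $[0,\infty]$, these bounds force $c_T'\leq d(T)\leq C_T'$, so $0<d(T)<\infty$. All the work is in sufficiency.

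For sufficiency, assume $0\in T$ and $d:=d(T)\in(0,\infty)$. Fix a system with $a\Id\leq S\leq b\Id$ as in \eqref{eq:S-bounds-density}. We split $H=E_rH\oplus E_A([0,r))H$ for a suitable $r<1$ and treat the high and low spectral parts separately.

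\textbf{High spectral part.} By Theorem~\ref{thm:high-spectrum-tauberian}, $\lVert E_r(G_T-dS)E_r\rVert\to0$. We may therefore choose $r_0<1$ so that, for $r\geq r_0$, the compression $E_rG_TE_r$ lies between $(da/2)\Id$ and $(2db)\Id$ on $E_rH$.

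\textbf{Low spectral part.} Since $N_T(R)\asymp R$, Theorem~\ref{thm:abel-counting} gives $\sum_{t\in T}s^{2t}<\infty$ for $s<1$. For $x$ in $E_A([0,r))H$ we then have the upper bound $\sum_t\lVert CA^tx\rVert^2\leq\lVert B\rVert\sum_t r^{2t}\lVert x\rVert^2$, since $\lVert A^t x\rVert\leq r^t\lVert x\rVert$ there. The cross terms are handled by Cauchy--Schwarz: once both diagonal pieces are Bessel, $G_T$ exists and is bounded. So the upper bound is routine.

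\textbf{Lower bound.} This is the main obstacle, because $G_T$ does not commute with $E_r$ and the off-diagonal blocks can destroy positivity. I would first try the following approach. Since $0\in T$ and $T\supset\{0\}$, for any $x$ we have
\[
\sum_{t\in T}\lVert CA^tx\rVert^2\geq\lVert Cx\rVert^2+\sum_{t\in T,\,t>0}\lVert CA^tx\rVert^2 .
\]
Rather than splitting, I would use a thinning comparison. Choose a large integer $K$ and compare $T$ with the lattice $K\N$. The $K\N$-system is exactly observable with lower bound depending on $K,a,b$, via the Stein identity applied to $A^K$: we have $S_K=\sum_n A^{Kn}BA^{Kn}$ and $S_K\geq\sum$ over residue classes $\geq S/K$-type bounds, using $S=\sum_{j<K}A^jS_KA^j$ and $\lVert A\rVert\leq1$, which gives $S_K\geq a/K$.

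This comparison does not by itself give the lower bound, however. The alternative I would actually pursue is a contradiction/compactness argument in the spirit of Theorem~\ref{thm:complete-characterization}. Suppose unit vectors $x_j$ in systems $(A_j,C_j)$ with uniform $a,b$ have $\langle G_{T,j}x_j,x_j\rangle\to0$. Decompose $x_j=E_rx_j+(\Id-E_r)x_j$. The high part is controlled by Theorem~\ref{thm:high-spectrum-tauberian}, whose constants are uniform, depending only on $a,b$, and $T$ through $M_\varepsilon,R_\varepsilon$. The low part, on a spectrum in $[0,r]$, is handled by finitely many sample times. Indeed, if $0\in T$ and $T$ contains at least $n_0$ points up to some height, then the tail of the $\N$-Gramian from spectrum $\leq r$ is small, so on $E_A([0,r))H$ the quadratic form is comparable with $\sum_{n<n_0}\lVert CA^nx\rVert^2$.

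One then needs the finite set $T\cap[0,R]$ to detect the first few integer times. To get this, I would compare through the continuous Gramian of Lemma~\ref{lem:continuous-bridge} together with the derivative bound of Lemma~\ref{lem:weighted-variation}: a point of $T$ in each unit window controls $\int F_x$ over that window up to the variation of $F_x$.

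\textbf{Cross terms.} These are estimated by $\lVert E_rG_T(\Id-E_r)\rVert$. That norm can be made small by applying the Tauberian identity of Theorem~\ref{thm:high-spectrum-tauberian} to the polarized forms, giving $E_r(G_T-dS)(\Id-E_r)\approx0$, together with $E_rS(\Id-E_r)$ small. Making this cross-term estimate rigorous uniformly is where I expect the real difficulty lies.
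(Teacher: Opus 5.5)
The necessity direction and the upper (Bessel) bound are correct and match the paper. The lower bound, however, is a genuine gap. You leave it open, and neither of the two mechanisms you propose would work.

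\emph{Low spectral part.} Comparing a point of $T$ with a window of $\int F_x$ gives only an additive error of the size of the variation of $F_x$. On $E_A([0,r))H$ that variation is not small compared with $\lVert x\rVert^2$. The smallness in Theorem~\ref{thm:high-spectrum-tauberian} comes precisely from $x\in E_rH$, where $F_x$ and $F_x'$ are $O(1-r)$ on bounded intervals. Lemma~\ref{lem:weighted-variation} carries the weight $t$, so it gives nothing near $t=0$. Moreover, natural density does not put a point of $T$ in every unit window (Example~\ref{ex:density-unbounded-gaps}). What a finite piece of $T$ really has to do is recover $CA^n$, $n<n_0$, from the $CA^t$, i.e.\ approximate $z^n$ uniformly by finite combinations of $z^t$, $t\in T$. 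That is a M\"untz fact, not a counting or window fact.

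\emph{Cross terms.} $E_rS(\Id-E_r)$ need not be small, because nothing separates $[0,r)$ from $[r,1]$. In the single-orbit model of Theorem~\ref{thm:mellin-finite-orbit}, if $\lambda_k<r<\lambda_{k+1}$ with $k,k+1\in J$, then $\lVert E_rS(\Id-E_r)\rVert\geq|S_{k+1,k}|$, and this tends to $\sech(\tfrac12\log q)=8/17$ as $k\to\infty$. Polarizing the Tauberian estimate does not help either, since its bounded-interval term requires both vectors to lie in $E_rH$. (Incidentally, $S=\sum_{j<K}A^jS_KA^j$ only yields $\lVert S_K\rVert\geq a/K$, not $S_K\geq (a/K)\Id$.)

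\emph{The missing idea.} Use M\"untz completeness in a sequential contradiction at the level of the analysis operator; this removes the need for any cross-term estimate.
\begin{enumerate}
\item Since $0\in T$ and $\tau_k/k\to 1/d$, the full M\"untz theorem makes $\operatorname{span}\{z^t:t\in T\}$ dense in $C[0,1]$. Hence, if unit vectors $x_j$ satisfy $\Gamma_Tx_j\to0$, then $Cg(A)x_j\to0$ for every $g\in C[0,1]$.
\item Choose a continuous cutoff $\chi$ equal to $1$ on $[0,r_0]$ and $0$ on $[r_1,1]$, where $r_0$ comes from the Tauberian lower bound. Put $u_j=\chi(A)x_j$ and $v_j=(\Id-\chi(A))x_j$.
\item Apply step 1 to $g(z)=z^n\chi(z)$ and $g(z)=z^t\chi(z)$, and use the uniform tails available on $E_A([0,r_1])H$. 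This gives $\Gamma_{\N}u_j\to0$ and $\Gamma_Tu_j\to0$.
\item The integer lower bound then forces $u_j\to0$. So $\lVert v_j\rVert\to1$, while $\Gamma_Tv_j=\Gamma_Tx_j-\Gamma_Tu_j\to0$. This contradicts $E_{r_0}G_TE_{r_0}\geq\frac{da}{2}E_{r_0}$, since $v_j\in E_A([r_0,1])H$.
\end{enumerate}
Linearity of $\Gamma_T$ does the work your cross-term bound was meant to do. M\"untz density is what guarantees that $\Gamma_Tu_j$ is small whenever $\Gamma_Tx_j$ is.
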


\begin{proof}
Assume first that $0\in T$ and $d=d(T)\in(0,\infty)$. Consider an
arbitrary system that is exactly observable at integer times and satisfies
\eqref{eq:S-bounds-density}. By
Theorem~\ref{thm:high-spectrum-tauberian}, there exists $r_0<1$ such that
\(
E_{r_0}G_TE_{r_0}
\geq
\frac{da}{2}E_{r_0},
\)
and the $T$-sampled analysis operator is bounded on $E_{r_0}H$.

Let $P=E_A([0,r_0))$. Since $N_T(R)=O(R)$, we have
$\sum_{t\in T}r_0^{2t}<\infty$, and hence
\[
\sum_{t\in T}\lVert CA^tPx\rVert^2
\leq
\lVert C\rVert^2
\left(\sum_{t\in T}r_0^{2t}\right)
\lVert x\rVert^2.
\]
Thus the $T$-sampled analysis operator $\Gamma_T$ is bounded on $H$.

Suppose that $\Gamma_T$ is not bounded below. Then there exist unit vectors
$x_j\in H$ such that $\Gamma_Tx_j\to0$. Since $0\in T$, we have
$\tau_0=0$. Since $N_T(R)/R\to d$, it follows that
$\tau_k/k\to1/d$, and therefore
\(
\sum_{k\geq1}\frac{\tau_k}{1+\tau_k^2}
=
\infty.
\)
By the full M\"untz theorem \cite{BorweinErdelyi},
$\operatorname{span}\{z^t:t\in T\}$ is dense in $C[0,1]$.

For every $t\in T$, the convergence $\Gamma_Tx_j\to0$ implies
$CA^tx_j\to0$. Hence $Cp(A)x_j\to0$ for every
$p\in\operatorname{span}\{z^t:t\in T\}$. Given $g\in C[0,1]$ and
$\varepsilon>0$, choose such a $p$ with
$\lVert g-p\rVert_\infty<\varepsilon$. Since $\lVert x_j\rVert=1$,
\(
\lVert C(g-p)(A)x_j\rVert
\leq
\lVert C\rVert\,\varepsilon.
\)
Letting first $j\to\infty$ and then $\varepsilon\downarrow0$ gives
\(
Cg(A)x_j\longrightarrow0\) for \(g\in C[0,1].\)

Choose $r_0<r_1<1$ and $\chi\in C[0,1]$ such that
$0\leq\chi\leq1$, $\chi=1$ on $[0,r_0]$, and $\chi=0$ on $[r_1,1]$.
Set $u_j=\chi(A)x_j$ and $v_j=(\Id-\chi(A))x_j$. For every fixed
$n\in\N$ and $t\in T$, the functions $z\mapsto z^n\chi(z)$ and
$z\mapsto z^t\chi(z)$ belong to $C[0,1]$. Hence
\(
CA^nu_j\rightarrow0,
\
CA^tu_j\rightarrow0.
\)

Since $u_j\in E_A([0,r_1])H$ and $\lVert u_j\rVert\leq1$,
\(
\sum_{n\geq N}\lVert CA^nu_j\rVert^2
\leq
\lVert C\rVert^2
\sum_{n\geq N}r_1^{2n},
\)
uniformly in $j$. Similarly, for every $R>0$,
\(
\sum_{\substack{t\in T\\t>R}}\lVert CA^tu_j\rVert^2
\leq
\lVert C\rVert^2
\sum_{\substack{t\in T\\t>R}}r_1^{2t},
\)
and the right hand side tends to zero as $R\to\infty$. Combining these
uniform tail estimates with convergence of every fixed coordinate gives
\(
\lVert\Gamma_{\N}u_j\rVert
+
\lVert\Gamma_Tu_j\rVert
\longrightarrow0.
\)

The lower observability bound for the integer system implies $u_j\to0$.
Hence $\lVert v_j\rVert\to1$, while
$\Gamma_Tv_j=\Gamma_Tx_j-\Gamma_Tu_j\to0$. Since
$v_j\in E_A([r_0,1])H$, the lower bound on \(E_A([r_0,1])H\) gives
\(
\lVert\Gamma_Tv_j\rVert^2
\geq
\frac{da}{2}\lVert v_j\rVert^2,
\)
a contradiction. Thus $\Gamma_T$ is bounded below, and therefore
$T\in\U$.

Conversely, suppose that $T\in\U$. By
Theorem~\ref{thm:abel-counting}, we have $0\in T$ and
$N_T(R)\asymp R$ as $R\to\infty$. Since the limit defining $d(T)$ exists,
it follows immediately that $0<d(T)<\infty$.
\end{proof}

The preceding proof is uniform across all finite matrix levels.

\begin{corollary}
\label{cor:complete-density}
Suppose that $0\in T$ and
$N_T(R)=dR+o(R)$ with $0<d<\infty$. Then, for every
$0<a\leq b<\infty$,
\[
\lim_{r\uparrow1}
\sup
\left\{
\lVert M_T^\lambda(S)-dS\rVert:
\begin{array}{l}
m\in\mathbb N^+,\quad d_1,\ldots,d_m\in\mathbb N^+,\\
\mathcal E=\displaystyle\bigoplus_{i=1}^m\mathbb C^{d_i},\quad
\lambda\in[r,1)^m,\\
S\in\Bcal(\mathcal E),\quad
a\Id\leq S\leq b\Id,\quad
D_\lambda(S)\geq0
\end{array}
\right\}
=0.
\]
\end{corollary}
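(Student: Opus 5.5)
The plan is to realize each finite model as an operator system and apply the estimate from the proof of Theorem~\ref{thm:high-spectrum-tauberian}, after checking that all constants there depend only on $a$, $b$, $r$ and $T$.

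Fix data $m$, $d_1,\dots,d_m$, $\lambda\in[r,1)^m$ and $S$ with $a\Id\leq S\leq b\Id$ and $D_\lambda(S)\geq0$. Set $A=\diag(\lambda_1\Id_{d_1},\ldots,\lambda_m\Id_{d_m})$ and $B=D_\lambda(S)$. By Lemma~\ref{lem:finite-spectral-identity}, $\sum_n A^nBA^n=S$, so \eqref{eq:S-bounds-density} holds with $C=B^{1/2}$. Since every $\lambda_i\geq r$, we have $E_r=E_A([r,1])=\Id_{\mathcal E}$. Under \eqref{eq:natural-density-assumption}, $\sum_{t\in T}(\lambda_i\lambda_j)^t<\infty$, so Lemma~\ref{lem:finite-spectral-identity} also gives $G_T=M_T^\lambda(S)$.

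The main step is to extract from the proof of Theorem~\ref{thm:high-spectrum-tauberian} the explicit bound
\[
\lVert E_r(G_T-dG_c)E_r\rVert\leq 2b(1-r)+4bM_\varepsilon R_\varepsilon(1-r)\delta_r+\sqrt2\,b\varepsilon ,
\]
valid for every $\varepsilon>0$. Here $\delta_r=-\log r$, and $M_\varepsilon,R_\varepsilon$ depend only on $D(\cdot)$, hence only on $T$. I would verify that each ingredient is dimension free:
\begin{itemize}[leftmargin=*]
\item the bound $\lVert E_rBE_r\rVert\leq 2b(1-r)$ uses only $B=S-ASA$ and $\lVert S\rVert\leq b$;
\item Lemma~\ref{lem:weighted-variation} already asserts that its constant $\sqrt2\,b$ is independent of the model;
\item the integration by parts identity involves only $D$.
\end{itemize}
Next, Lemma~\ref{lem:continuous-bridge} gives $\lVert E_r(G_c-S)E_r\rVert\leq b\delta_r$, again depending only on $b$ and $r$. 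Combining the two bounds gives
\[
\lVert M_T^\lambda(S)-dS\rVert\leq 2b(1-r)+4bM_\varepsilon R_\varepsilon(1-r)\delta_r+\sqrt2\,b\varepsilon+d\,b\,\delta_r
\]
uniformly over all admissible data with $\lambda\in[r,1)^m$.

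Taking the supremum, then letting $r\uparrow1$ and afterwards $\varepsilon\downarrow0$, yields the claim. The main obstacle is the bookkeeping: confirming that nothing in the Tauberian argument silently used the dimension or the spectral measure beyond $\lVert A\rVert\leq1$ and the restriction to $E_r$. In particular, the pointwise estimate $|F_x'(t)|\leq4b(1-r)\delta_r\lVert x\rVert^2$ relies on $\lVert H_A E_r\rVert\leq\delta_r$, which holds for any spectrum inside $[r,1)$. The claim also needs $(1-r)\delta_r\to0$, which is true.
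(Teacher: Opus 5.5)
Your proposal is correct and follows the paper's argument. You realize each admissible $(\lambda,S)$ as the finite spectral model of Lemma~\ref{lem:finite-spectral-identity}, where $E_r=\Id$ because $\lambda\in[r,1)^m$, and then apply the Tauberian estimate from the proof of Theorem~\ref{thm:high-spectrum-tauberian} together with Lemma~\ref{lem:continuous-bridge}. Your explicit bound $2b(1-r)+4bM_\varepsilon R_\varepsilon(1-r)\delta_r+\sqrt2\,b\varepsilon+d\,b\,\delta_r$ is a more detailed version of the paper's one-line remark that all these constants depend only on $b$, $r$, and $T$, and not on the matrix size or multiplicities.
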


\begin{proof}
Apply Theorem~\ref{thm:high-spectrum-tauberian} to the finite spectral
model of Lemma~\ref{lem:finite-spectral-identity}. In that model,
$G_{\N}=S$ and $G_T=M_T^\lambda(S)$. All constants in the proof of
Theorem~\ref{thm:high-spectrum-tauberian} depend only on $a$, $b$, and
$T$, and are independent of the number of spectral points and their
multiplicities. The asserted uniform convergence follows.
\end{proof}

\subsection{Lower and upper Beurling densities}
Density conditions of Beurling type have a long history in irregular
sampling and frame theory; see, for example,
\cite{LandauDensity,GrochenigIrregular,ChristensenDengHeil,
BalanCasazzaHeilLandauDensity}.
For a locally finite set \(T\subset\Rplus\), define its lower and upper
Beurling densities on the half line by
\[
D^-(T)
=
\liminf_{R\to\infty}
\inf_{x\geq0}
\frac{\#(T\cap[x,x+R])}{R},
\qquad
D^+(T)
=
\limsup_{R\to\infty}
\sup_{x\geq0}
\frac{\#(T\cap[x,x+R])}{R}.
\]
Then
\(
0<D^-(T)\leq D^+(T)<\infty
\)
is equivalent to the existence of \(L>0\), \(M\in\mathbb N^+\), and
\(k_0\in\mathbb N\) such that
\begin{equation}\label{eq:fixed-scale-occupancy}
1
\leq
\#\bigl(T\cap[kL,(k+1)L)\bigr)
\leq
M,
\qquad k\geq k_0.
\end{equation}
Indeed, if \(D^-(T)>0\) and \(D^+(T)<\infty\), then there exist
\(L>0\) and \(M\in\mathbb N^+\) such that every interval of length
\(L\) contains at least one and at most \(M\) points of \(T\), uniformly
in its left endpoint.
Conversely, \eqref{eq:fixed-scale-occupancy} implies uniform linear upper
and lower bounds for the number of points in every sufficiently long
interval, since such an interval contains all but at most two of the
corresponding \(L\)-blocks. The finitely many blocks before \(k_0\) do not
affect the limiting densities.

\begin{theorem}
\label{thm:occupancy}
Let \(T\subset\Rplus\) be locally finite. If
\(
0\in T\) and \(
0<D^-(T)\leq D^+(T)<\infty,
\)
then \(T\in\U\).
\end{theorem}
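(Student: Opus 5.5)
The plan is to follow the two-step scheme of the proof of Theorem~\ref{thm:natural-density}. First we obtain a uniform lower bound for the sampled Gramian on a high spectral subspace $E_rH$ by comparing it with the continuous time Gramian $G_c$. Then we remove the low part of the spectrum by the M\"untz and compactness argument. The Tauberian step, which needed a natural density, is replaced by a block-by-block comparison that uses only the occupancy condition \eqref{eq:fixed-scale-occupancy}. Fix a system satisfying \eqref{eq:S-bounds-density}, and fix $L$, $M$, $k_0$ as in \eqref{eq:fixed-scale-occupancy}.

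\emph{High spectrum.} Let $x\in E_rH$, put $f_x(t)=CA^tx$ and $F_x=\lVert f_x\rVert^2$, and write $I_k=[kL,(k+1)L)$. The key estimate is that the total variation of $F_x$ is small when $r$ is close to $1$. As in the proof of Lemma~\ref{lem:weighted-variation}, the Stein identity gives
\[
\int_0^\infty\lVert f_x'(t)\rVert^2\dd t=\int_0^1\langle SH_AA^tx,H_AA^tx\rangle\dd t\leq b\,\delta_r^2\lVert x\rVert^2,
\qquad \delta_r=-\log r .
\]
Lemma~\ref{lem:continuous-bridge} gives $\int_0^\infty\lVert f_x\rVert^2\leq b\lVert x\rVert^2$. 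Combining these with $|F_x'|\leq2\lVert f_x\rVert\lVert f_x'\rVert$ and Cauchy--Schwarz yields
\[
V_x:=\int_0^\infty|F_x'|\leq 2b\,\delta_r\lVert x\rVert^2 .
\]
For $t,s\in I_k$ we have $|F_x(t)-F_x(s)|\leq\int_{I_k}|F_x'|$. Averaging over $s\in I_k$ and using that $I_k$ contains between $1$ and $M$ points of $T$ when $k\geq k_0$, we get
\[
\frac1L\int_{I_k}F_x-\int_{I_k}|F_x'|\ \leq\ \sum_{t\in T\cap I_k}F_x(t)\ \leq\ \frac ML\int_{I_k}F_x+M\int_{I_k}|F_x'| .
\]
Summing over $k\geq k_0$ gives the upper estimate $\sum_{t\in T,\,t\geq k_0L}F_x(t)\leq \frac{M}{L}b\lVert x\rVert^2+MV_x$. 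The finitely many points of $T$ in $[0,k_0L)$ contribute at most $N_T(k_0L)\cdot 2b(1-r)\lVert x\rVert^2$, since $\lVert E_rBE_r\rVert\leq 2b(1-r)$ as in the proof of Theorem~\ref{thm:high-spectrum-tauberian}. For the lower estimate we need $\frac1L\int_{k_0L}^\infty F_x\geq\frac1L\bigl(\langle G_cx,x\rangle-2b(1-r)k_0L\lVert x\rVert^2\bigr)$. Lemma~\ref{lem:continuous-bridge} then gives $\langle G_cx,x\rangle\geq(a-o(1))\lVert x\rVert^2$ as $r\uparrow1$. Since $V_x=O(\delta_r)\lVert x\rVert^2$, we can choose $r_0<1$ such that
\[
\tfrac{a}{2L}\lVert x\rVert^2\leq\sum_{t\in T}\lVert CA^tx\rVert^2\leq \bigl(\tfrac{Mb}{L}+1\bigr)\lVert x\rVert^2,
\qquad x\in E_{r_0}H .
\]

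\emph{Low spectrum and gluing.} The occupancy condition gives $N_T(R)=O(R)$. Hence $\sum_{t\in T}r_0^{2t}<\infty$, and $\Gamma_T$ is bounded on $E_A([0,r_0))H$. Together with the high-spectrum bound and the triangle inequality, $\Gamma_T$ is bounded on $H$. For the lower bound we also need $\tau_k\leq Ck$ for large $k$, which follows from the lower occupancy bound. This gives $\sum_k\tau_k/(1+\tau_k^2)=\infty$, so the full M\"untz theorem applies. From here the argument of Theorem~\ref{thm:natural-density} goes through verbatim: take unit vectors $x_j$ with $\Gamma_Tx_j\to0$, split $x_j=\chi(A)x_j+(\Id-\chi(A))x_j$, use M\"untz density and uniform tails to force $\chi(A)x_j\to0$, and then contradict the lower bound on $E_A([r_0,1])H$ obtained above.

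\emph{Main obstacle.} The delicate step is the high-spectrum comparison, where no density limit is available. The point is that the unweighted variation $\int|F_x'|$ tends to zero as $r\uparrow1$, uniformly in the system, because of the factor $\delta_r$. This is what lets a single point per block of length $L$ control the block integral. I would verify this estimate first, and check that the finitely many initial blocks below $k_0L$ are absorbed using $\lVert E_rBE_r\rVert=O(1-r)$.
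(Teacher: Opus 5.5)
Your proof is correct, but it takes a different route from the paper. The paper proves Theorem~\ref{thm:occupancy} by reducing it to Theorem~\ref{thm:natural-density}. It regroups $T$ into blocks $[Nk,N(k+1))$ with an integer $N>2L$. For the lower bound, it picks one point of $T$ per block to obtain a subset $T^{(0)}\subset T$ with $0\in T^{(0)}$ and natural density $1/N$; the lower bound is then inherited from $T^{(0)}\in\U$. For the upper bound, it splits $T\cap[NK,\infty)$ into at most $M_0$ sequences, pads each with the lattice points $N(K+n-1)$ so that it has natural density $1/N$, and sums the resulting Bessel bounds. You instead redo the high-spectrum step directly, replacing the Tauberian integration by parts of Theorem~\ref{thm:high-spectrum-tauberian} with a fixed-scale block comparison. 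Your key input, the unweighted bound $\int_0^\infty|F_x'|\leq 2b(-\log r)\lVert x\rVert^2$ on $E_rH$, does follow as you say from the Stein identity for $q_x$ and Lemma~\ref{lem:continuous-bridge}. The block estimates, the absorption of the initial segment via $\lVert E_rBE_r\rVert\leq 2b(1-r)$, and the M\"untz low-spectrum argument (with $\tau_k\leq Ck$ from the lower occupancy bound) all go through. The paper's route is shorter once Theorem~\ref{thm:natural-density} is available, and it exhibits a structural fact: an occupancy set contains a natural-density subset and is covered by finitely many such sets. Your route is self-contained, since it needs neither the weighted variation estimate of Lemma~\ref{lem:weighted-variation} nor the Tauberian theorem. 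It also makes the mechanism transparent: one sample per block of fixed length suffices because $F_x$ has small total variation near the spectral boundary. Finally, it gives explicit high-spectrum bounds of order $a/L$ and $Mb/L$, depending only on $a$, $b$, and $T$, in the spirit of Corollary~\ref{cor:complete-density}; the paper's constants are obtained only indirectly, from universality of the auxiliary sets.
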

\begin{proof}
The Beurling density assumptions imply that, for some sufficiently large
\(L>0\), there exist \(M\in\mathbb N^+\) and \(k_0\in\mathbb N\) such that
\(
1
\leq
\#\bigl(T\cap[kL,(k+1)L)\bigr)
\leq
M\) for \( k\geq k_0\).
We therefore use this equivalent fixed scale point count formulation.
Choose an integer $N>2L$ and then choose $K\in\mathbb N^+$ so large that
$NK\geq k_0L$. For $k\geq K$, set
\(
I_k=[Nk,N(k+1)),
\qquad
T_k=T\cap I_k.
\)
Every $I_k$ contains a complete interval of the form
$[\ell L,(\ell+1)L)$ with $\ell\geq k_0$ and intersects at most
$\lceil N/L\rceil+2$ intervals of this form. Hence, with
\(
M_0:=M\bigl(\lceil N/L\rceil+2\bigr),
\)
we have
\(
1\leq \#T_k\leq M_0,
\qquad k\geq K.
\)

For the lower bound, set $\tau_0=0$ and, for $n\geq1$, choose
\(
\tau_n\in T_{K+n-1}.
\)
Then
\(
N(K+n-1)\leq \tau_n<N(K+n),
\)
so $(\tau_n)_{n\geq0}$ is strictly increasing and
\(
\tau_n/n\to N.
\)
Therefore the time set
\(
T^{(0)}:=\{\tau_n:n\in\mathbb N\}
\)
has natural density $1/N$. Since $0\in T^{(0)}$,
Theorem~\ref{thm:natural-density} gives $T^{(0)}\in\U$.
Consequently, for every system that is exactly observable at integer
times, there exists $c>0$ such that
\(
\sum_{n\geq0}\lVert CA^{\tau_n}x\rVert^2
\geq
c\lVert x\rVert^2,
\qquad x\in H.
\)
Since $T^{(0)}\subset T$, the same lower bound holds for the $T$-system.

It remains to prove the upper bound. For $k\geq K$, write
\(
T_k=\{\theta_{k,1}<\cdots<\theta_{k,m_k}\},
\qquad 1\leq m_k\leq M_0.
\)
For $1\leq j\leq M_0$, define
\[
\sigma_0^{(j)}=0,
\qquad
\sigma_n^{(j)}
=
\begin{cases}
\theta_{K+n-1,j}, & j\leq m_{K+n-1},\\
N(K+n-1), & j>m_{K+n-1},
\end{cases}
\qquad n\geq1.
\]
Then $(\sigma_n^{(j)})_{n\geq0}$ is strictly increasing and
\(
N(K+n-1)\leq\sigma_n^{(j)}<N(K+n),
\qquad n\geq1.
\)
Hence
\(
\sigma_n^{(j)}/n\to N,
\)
so the time set
\(
T^{(j)}:=\{\sigma_n^{(j)}:n\in\mathbb N\}
\)
has natural density $1/N$. By Theorem~\ref{thm:natural-density},
$T^{(j)}\in\U$. In particular, for the fixed system under consideration,
there exists $b_j<\infty$ such that, for every $x\in H$,
\(
\sum_{n\geq0}
\lVert CA^{\sigma_n^{(j)}}x\rVert^2
\leq
b_j\lVert x\rVert^2.
\)

Since
\(
T\cap[NK,\infty)
=
\bigcup_{j=1}^{M_0}
\{\theta_{k,j}:k\geq K,\ j\leq m_k\},
\)
and each set on the right is contained in $T^{(j)}$, we obtain
\(
\sum_{\substack{t\in T\\ t\geq NK}}
\lVert CA^t x\rVert^2
\leq
\sum_{j=1}^{M_0}b_j\lVert x\rVert^2.
\)
Finally, $T\cap[0,NK)$ is finite, so
\[
\sum_{t\in T\cap[0,NK)}
\lVert CA^t x\rVert^2
\leq
\left(
\sum_{t\in T\cap[0,NK)}
\lVert C\rVert^2\lVert A^t\rVert^2
\right)
\lVert x\rVert^2.
\]
Thus the $T$-system is Bessel. Together with the lower bound, this shows
that it is exactly observable. Hence $T\in\U$.
\end{proof}

The two mechanisms are incomparable.

\begin{example}
\label{ex:occupancy-no-density}
Choose integers
\(
0=N_0<N_1<N_2<\cdots
\)
such that
\(
N_{j+1}-N_j\geq jN_j\) for \(j\in\N\).
Define
\[
T
=
\{0\}
\cup
\bigcup_{j\geq0}
\{k+1/3:N_{2j}\leq k<N_{2j+1}\}
\cup
\bigcup_{j\geq0}
\{k+1/3,k+2/3:N_{2j+1}\leq k<N_{2j+2}\}.
\]
Then
\(
1
\leq
\#\bigl(T\cap[k,k+1)\bigr)
\leq
2\) for \(k\in\N,\)
and hence $T\in\U$ by Theorem~\ref{thm:occupancy}.

Moreover,
\(
\frac{N_j}{N_{j+1}}
\leq
\frac{1}{j+1}
\rightarrow0.
\)
At the end of a one point block,
\(
N_T(N_{2j+1})
=
N_{2j+1}-N_{2j}
+
O(N_{2j}),
\)
and therefore
\(
\frac{N_T(N_{2j+1})}{N_{2j+1}}
\rightarrow1.
\)
At the end of a two point block,
\[
N_T(N_{2j+2})
=
2\bigl(N_{2j+2}-N_{2j+1}\bigr)
+
O(N_{2j+1}),
\]
and hence
\(
\frac{N_T(N_{2j+2})}{N_{2j+2}}
\rightarrow2.
\)
Thus the natural density of $T$ does not exist.
\end{example}

\begin{example}
\label{ex:density-unbounded-gaps}
Fix $N\in\mathbb N^+$. Choose integers
\(
1<K_1<K_2<\cdots
\)
so rapidly increasing that, with
\(
H_m=\sqrt{K_m},
\
L_m=\left\lceil\frac{2H_m}{N}\right\rceil,
\)
for \(m\geq1,\) the intervals
\(
[K_m,K_m+L_m]\)
are pairwise disjoint and satisfy
\(
K_{m+1}>K_m+L_m+1.
\)
Define
\[
\delta_{K_m+j}
=
\max\left\{
H_m-\frac{N}{2}j,0
\right\},
\qquad
0\leq j\leq L_m,
\]
and set $\delta_k=0$ for all remaining $k\in\N$. Let
\(
\tau_k=Nk+\delta_k,
\
T=\{\tau_k:k\in\N\}.
\)

For indices inside one of the intervals,
\(
\delta_{k+1}-\delta_k\geq-\frac{N}{2},
\)
while outside these intervals the perturbation is zero. At each left
endpoint $K_m$, the perturbation jumps upward. Hence
\(
\tau_{k+1}-\tau_k
=
N+\delta_{k+1}-\delta_k
\geq
\frac{N}{2}>0,
\)
so $(\tau_k)$ is strictly increasing.

Moreover, if $K_m\leq k\leq K_m+L_m$, then
\(
0\leq
\frac{\delta_k}{k}
\leq
\frac{H_m}{K_m}
=
\frac{1}{\sqrt{K_m}},
\)
whereas $\delta_k=0$ outside these intervals. Therefore
\(
\frac{\delta_k}{k}\rightarrow0
\)
and hence
\(
\frac{\tau_k}{k}
=
N+\frac{\delta_k}{k}
\rightarrow N.
\)
It follows that
\(
\frac{N_T(R)}{R}
\rightarrow
\frac{1}{N}\) for \(R\to\infty.\)
Since $0=\tau_0\in T$, Theorem~\ref{thm:natural-density} gives
$T\in\U$.

At the left endpoint of the $m$th interval,
\(
\tau_{K_m}-\tau_{K_m-1}
=
N+H_m
=
N+\sqrt{K_m}
\rightarrow\infty.
\)
Thus the adjacent gaps of $T$ are unbounded. Consequently, for every
$L>0$ there exists an interval of length greater than $2L$ containing no
point of $T$, and such an interval contains some block
$[kL,(k+1)L)$. Hence
\(
T\cap[kL,(k+1)L)=\varnothing
\)
for some $k\in\N$. Therefore $T$ has natural density $1/N$ but fails every fixed scale lower point count condition.
\end{example}

Therefore neither natural density nor positive lower and finite upper
Beurling density characterizes the full class $\U$; the complete
Stein--Pick condition provides the exact characterization.

\section{Quantitative stability and critical perturbation scales}

The preceding sections give criteria for universal sampling. We now study
perturbations of regular lattices. We first establish a boundary transfer
principle and derive a cumulative perturbation criterion. We then compare
this quantitative condition with perturbation bounds relative to a fixed
lattice and identify their different critical scales.

Let
\(
T=\{t_k:k\in\N\}
\)
and
\(
T_0=\{t_k^{(0)}:k\in\N\}
\)
be infinite locally finite time sets, enumerated increasingly, with
\(t_k,t_k^{(0)}\to\infty\). Put \(\Sigma=\sigma(A)\subset[0,1]\). We write \(C(\Sigma)\) for the
space of continuous complex functions on \(\Sigma\), equipped with
the uniform norm. Define
\(
Q_{T,T_0}(z)
=
\sum_{k\geq0}|z^{t_k}-z^{t_k^{(0)}}|^2,
\)
\(
Q_{T,T_0}^{(K)}(z)
=
\sum_{k\geq K}|z^{t_k}-z^{t_k^{(0)}}|^2,
\)
and
\(
R_{T_0}^{(K)}(z)
=
\sum_{k\geq K}z^{2t_k^{(0)}}.
\)

\subsection{A transfer principle and a cumulative perturbation criterion}

The following result is the basic perturbative mechanism. It separates the
interior spectral region, controlled by M\"untz completeness and uniform
tail decay, from the boundary region, controlled by \(Q_{T,T_0}\).

\begin{theorem}
\label{thm:muntz-transfer}
Assume that the reference analysis operator
\(
\Gamma_{T_0}x=\{CA^{t_k^{(0)}}x\}_{k\geq0}
\)
is exactly observable and that:
\begin{enumerate}[label=\textup{(\roman*)}]
\item
\(\operatorname{span}\{z^{t_k}:k\geq0\}\) is dense in \(C(\Sigma)\);

\item
for every \(0<\rho<1\),
\[
\sup_{z\in\Sigma\cap[0,\rho]}R_{T_0}^{(K)}(z)\longrightarrow0,
\qquad
\sup_{z\in\Sigma\cap[0,\rho]}Q_{T,T_0}^{(K)}(z)\longrightarrow0
\qquad (K\to\infty);
\]

\item
\(\sup_{z\in\Sigma}Q_{T,T_0}(z)<\infty\) and
\(
\lim_{r\uparrow1}
\sup_{z\in\Sigma\cap[r,1]}Q_{T,T_0}(z)=0.
\)
\end{enumerate}
Then the \(T\)-sampled analysis operator
\(
\Gamma_Tx=\{CA^{t_k}x\}_{k\geq0}
\)
is exactly observable.
\end{theorem}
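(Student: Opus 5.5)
First I will establish the upper bound, by comparing \(\Gamma_T\) with \(\Gamma_{T_0}\) through the difference operator \(\Delta x=\{C(A^{t_k}-A^{t_k^{(0)}})x\}_{k\geq0}\). By the spectral theorem,
\[
\sum_{k\geq0}\lVert C(A^{t_k}-A^{t_k^{(0)}})x\rVert^2
\leq \lVert C\rVert^2\sum_{k}\lVert (A^{t_k}-A^{t_k^{(0)}})x\rVert^2
=\lVert C\rVert^2\int_\Sigma Q_{T,T_0}(z)\,\dd\langle E_A(z)x,x\rangle ,
\]
so (iii) gives \(\lVert\Delta\rVert^2\leq \lVert C\rVert^2\sup_\Sigma Q_{T,T_0}\). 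Hence \(\Gamma_T=\Gamma_{T_0}+\Delta\) is bounded, and the \(T\)-system is Bessel. The same computation restricted to \(E_A([r,1])H\) yields
\[
\lVert\Delta E_A([r,1])\rVert^2\leq\lVert C\rVert^2\sup_{\Sigma\cap[r,1]}Q_{T,T_0}.
\]
By (iii) this tends to \(0\). I then choose \(r_0<1\) with \(\lVert\Delta E_A([r_0,1])\rVert\leq \tfrac12\sqrt{a_0}\), where \(a_0\) is the lower bound of \(\Gamma_{T_0}\). This gives
\[
\lVert\Gamma_T v\rVert\geq\tfrac12\sqrt{a_0}\,\lVert v\rVert,\qquad v\in E_A([r_0,1])H.
\]

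For the lower bound I will copy the compactness and Müntz argument from the proof of Theorem~\ref{thm:natural-density}. Suppose unit vectors \(x_j\) satisfy \(\Gamma_Tx_j\to0\). Each coordinate then satisfies \(CA^{t_k}x_j\to0\). By (i), together with the estimate \(\lVert Cg(A)x_j\rVert\leq\lVert C\rVert\lVert g\rVert_{C(\Sigma)}\), this gives \(Cg(A)x_j\to0\) for every \(g\in C(\Sigma)\). Next, take \(r_0<r_1<1\) and a cutoff \(\chi\) with \(\chi=1\) on \([0,r_0]\) and \(\chi=0\) on \([r_1,1]\). Set \(u_j=\chi(A)x_j\) and \(v_j=(\Id-\chi(A))x_j\). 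Every coordinate \(CA^{t_k^{(0)}}u_j=C(z^{t_k^{(0)}}\chi)(A)x_j\) tends to \(0\). Tails are uniform: the part \(\sum_{k\geq K}\lVert CA^{t_k^{(0)}}u_j\rVert^2\) is bounded by \(\lVert C\rVert^2\sup_{\Sigma\cap[0,r_1]}R^{(K)}_{T_0}\), which tends to \(0\) by (ii). Therefore \(\Gamma_{T_0}u_j\to0\), and exact observability of \(\Gamma_{T_0}\) forces \(u_j\to0\). Hence \(\lVert v_j\rVert\to1\).

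Similarly, \(\Gamma_Tu_j\to0\): its coordinates tend to \(0\), and its tails are controlled by \(2\lVert C\rVert^2\sup_{[0,r_1]}(Q^{(K)}_{T,T_0}+R^{(K)}_{T_0})\), again via (ii). Consequently \(\Gamma_Tv_j=\Gamma_Tx_j-\Gamma_Tu_j\to0\). Since \(v_j\in E_A([r_0,1])H\), this contradicts the high-spectrum lower bound. I expect this argument to go through as stated. The only real subtlety is the tail control for \(\Gamma_Tu_j\), which needs (ii) for both \(Q\) and \(R\); that is presumably why both hypotheses appear there.
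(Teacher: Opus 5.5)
Your proposal is correct and follows essentially the paper's route. You bound $\Gamma_T-\Gamma_{T_0}$ through $Q_{T,T_0}$ via the spectral theorem, use (i) to get $Cg(A)x_j\to0$, split $x_j$ with a continuous cutoff, control the tails uniformly by (ii), and make the high-spectrum error small by (iii). The only difference is organizational: you fix $r_0$ first to obtain a lower bound for $\Gamma_T$ on $E_A([r_0,1])H$ and deduce $u_j\to0$, whereas the paper applies the lower bound of $\Gamma_{T_0}$ to $x_j$ in a single triangle inequality and lets $r\uparrow1$ only at the end.
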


\begin{proof}
The spectral theorem gives
\begin{equation}\label{eq:transfer-difference}
\lVert(\Gamma_T-\Gamma_{T_0})x\rVert^2
\leq
\lVert C\rVert^2
\int_\Sigma Q_{T,T_0}(z)\,
\dd\langle E_A(z)x,x\rangle .
\end{equation}
Hence \(\Gamma_T\) is bounded, and condition \textup{(iii)} yields
\(
\lVert(\Gamma_T-\Gamma_{T_0})E_A([r,1])\rVert\to0
\)
as \(r\uparrow1\).

Suppose that \(\Gamma_T\) is not bounded below. Choose unit vectors \(x_j\)
such that \(\Gamma_Tx_j\to0\). By \textup{(i)} and continuous functional
calculus, \(Cg(A)x_j\to0\) for every \(g\in C(\Sigma)\). Fix
\(0<r<\rho<1\), choose \(\chi\in C(\Sigma)\) such that
\(0\leq\chi\leq1\), \(\chi=1\) on \(\Sigma\cap[0,r]\), and
\(\chi=0\) on \(\Sigma\cap[\rho,1]\), and set
\(u_j=\chi(A)x_j\) and \(v_j=(\Id-\chi(A))x_j\).

For each fixed coordinate, both \(CA^{t_k^{(0)}}u_j\) and
\(CA^{t_k}u_j\) tend to zero. Condition \textup{(ii)} supplies the
corresponding uniform tail estimates, and therefore
\(
\lVert\Gamma_{T_0}u_j\rVert+\lVert\Gamma_Tu_j\rVert\to0.
\)
Hence \(\Gamma_Tv_j\to0\).

Let
\(
m_{T_0}:=\inf_{\lVert x\rVert=1}
\lVert\Gamma_{T_0}x\rVert>0.
\)
Then
\[
m_{T_0}
\leq
\lVert\Gamma_{T_0}u_j\rVert
+
\lVert\Gamma_Tv_j\rVert
+
\lVert(\Gamma_{T_0}-\Gamma_T)E_A([r,1])\rVert
\lVert v_j\rVert .
\]
Letting first \(j\to\infty\) and then \(r\uparrow1\) gives a contradiction.
Thus \(\Gamma_T\) is bounded below and hence exactly observable.
\end{proof}

To verify the boundary condition in Theorem~\ref{thm:muntz-transfer}, we
use the following Abel estimate, which turns a bound on cumulative squared
deviations into decay near the spectral boundary.

\begin{lemma}
\label{lem:exponential-abel}
Let \(a_k\geq0\) and
\(
A_K=\sum_{k=0}^{K}a_k.
\)
Suppose that
\(
D=\limsup_{K\to\infty}\frac{A_K}{(K+1)^2}<\infty.
\)
Then, for every \(c>0\),
\begin{equation}\label{eq:abel-sequence-estimate}
\limsup_{u\downarrow0}
u^2\sum_{k\geq0}a_k\e^{-cuk}
\leq
\frac{2D}{c^2}.
\end{equation}
If \(A_K=o(K^2)\), then
\(u^2\sum_{k\geq0}a_k\e^{-cuk}\to0\) as \(u\downarrow0\).
\end{lemma}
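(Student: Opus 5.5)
The plan is Abel summation by parts, which expresses the weighted series through the partial sums \(A_K\), followed by the elementary integral \(\int_0^\infty s^2\e^{-s}\dd s=2\).

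Fix \(c>0\) and put \(w=\e^{-cu}\in(0,1)\). Since \(A_K\leq C(K+1)^2\) for all \(K\) (by the limsup hypothesis together with finitely many initial terms), we have \(A_Kw^{K}\to0\). Summation by parts then gives
\[
\sum_{k\geq0}a_kw^k=(1-w)\sum_{K\geq0}A_Kw^K ,
\]
with all terms nonnegative, so the rearrangement is justified by Tonelli.

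Given \(\varepsilon>0\), choose \(K_0\) such that \(A_K\leq(D+\varepsilon)(K+1)^2\) for \(K\geq K_0\). Split the sum at \(K_0\).

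\emph{Head.} The finitely many terms \(K<K_0\) contribute at most \((1-w)K_0\max_{K<K_0}A_K\). After multiplication by \(u^2\) this is \(O(u^3)\) and vanishes as \(u\downarrow0\).

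\emph{Tail.} The remaining terms are bounded by
\[
(1-w)(D+\varepsilon)\sum_{K\geq0}(K+1)^2w^K=(D+\varepsilon)\frac{1+w}{(1-w)^2},
\]
using the standard generating function \(\sum_{K\geq0}(K+1)^2w^K=(1+w)/(1-w)^3\). As \(u\downarrow0\) we have \(1-w\sim cu\) and \(1+w\to2\), so
\[
u^2\,(D+\varepsilon)\frac{1+w}{(1-w)^2}\to\frac{2(D+\varepsilon)}{c^2}.
\]
Letting \(\varepsilon\downarrow0\) yields \eqref{eq:abel-sequence-estimate}.

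For the \(o(K^2)\) statement, take \(D=0\) in the same argument. The limsup is then \(\leq2\varepsilon/c^2\) for every \(\varepsilon>0\). Since \(u^2\sum_{k\geq0}a_k\e^{-cuk}\geq0\), this gives convergence to \(0\).

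The only points needing care are the justification of the summation by parts, which rests on nonnegativity and the polynomial growth of \(A_K\), and the handling of the finite head. Neither step is a genuine obstacle; the main computation is the exact generating-function identity.
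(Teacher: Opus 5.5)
Your proposal is correct and follows essentially the same route as the paper: Abel summation to \((1-w)\sum_{K\geq0}A_Kw^K\), the generating function \(\sum_{K\geq0}(K+1)^2w^K=(1+w)/(1-w)^3\) with \(1-w\sim cu\), and a split into a finite head plus a tail controlled by \(A_K\leq(D+\varepsilon)(K+1)^2\), followed by \(\varepsilon\downarrow0\). You make explicit the head/tail split and the Tonelli justification for the first estimate, which the paper's terse proof leaves implicit (it spells out the split only for the \(o(K^2)\) case).
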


\begin{proof}
Put \(q=\e^{-cu}\). Abel summation gives
\(\sum_{k\geq0}a_kq^k=(1-q)\sum_{K\geq0}A_Kq^K\). Since
\(\sum_{K\geq0}(K+1)^2q^K=(1+q)/(1-q)^3\) and
\(1-\e^{-cu}\sim cu\), \eqref{eq:abel-sequence-estimate} follows.

If \(A_K=o(K^2)\), split the sum into a finite initial part and a tail on
which \(A_K\leq\varepsilon(K+1)^2\), and then let
\(\varepsilon\downarrow0\).
\end{proof}

We now specialize the reference set to a regular lattice. The following
proposition justifies the choice \(T_0=N\mathbb N\).

\begin{proposition}
\label{prop:integer-step}
Let $N\in\mathbb N^+$. Then the $\N$-system is exactly observable if
and only if the $N\N$-system is exactly observable.
\end{proposition}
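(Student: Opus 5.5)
The plan is to prove both directions directly from the Gramians, using positivity of $A$ and the Stein identity. In both directions Lemma~\ref{lem:auto-contraction} gives $0\leq A\leq\Id$; for the $N\N$-system we apply it to the pair $(A^N,C)$, since $A^N$ is again positive. Set $B=C^*C$.

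\emph{From $\N$ to $N\N$.} The upper bound is immediate, because $\sum_{k\geq0}\lVert CA^{Nk}x\rVert^2\leq\sum_{n\geq0}\lVert CA^nx\rVert^2\leq b\lVert x\rVert^2$. For the lower bound, split $\N$ into residue classes: $n=Nk+j$ with $0\leq j<N$. Since $A^{Nk+j}=A^{Nk}A^j$, we have
\[
\sum_{n\geq0}\lVert CA^nx\rVert^2=\sum_{j=0}^{N-1}\sum_{k\geq0}\lVert CA^{Nk}A^jx\rVert^2=\sum_{j=0}^{N-1}\langle G_{N\N}A^jx,A^jx\rangle .
\]
Because $A$ is a positive contraction, $A^{2j}\leq\Id$, and therefore $\langle G_{N\N}A^jx,A^jx\rangle\leq\lVert G_{N\N}\rVert\lVert x\rVert^2$. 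This only bounds quantities from above, so it does not directly produce a lower bound for $G_{N\N}$. Instead, I would use the operator identity $G_{\N}=\sum_{j=0}^{N-1}A^jG_{N\N}A^j$, which also follows from the decomposition above.

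Suppose $G_{N\N}$ is not bounded below, and pick unit vectors $x_m$ with $\langle G_{N\N}x_m,x_m\rangle\to0$. Then $\lVert G_{N\N}^{1/2}x_m\rVert\to0$. The key step is to show $\lVert G_{N\N}^{1/2}A^jx_m\rVert\to0$ for each $j<N$; this is the main obstacle, since $A^j$ need not commute with $G_{N\N}$. I would try the Stein identity for the $N\N$-system, $G_{N\N}-A^NG_{N\N}A^N=B$, combined with the bound $\langle A^jG_{N\N}A^jx,x\rangle\leq\langle G_{\N}x,x\rangle$.

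An alternative route to the lower bound runs through the spectral representation. For a fixed $x$, the $\N$-series and the $N\N$-series are governed by $(1-z)^{-1}$ versus $(1-z^N)^{-1}$, and $\frac{1-z}{1-z^N}\geq\frac1N$ for $0\leq z<1$. This works cleanly in the commuting case. For the general case the finite spectral models of Lemma~\ref{lem:finite-spectral-identity} apply: with $S=G_{\N}$, the Gramian is $G_{N\N}=[\Phi_{N\N}(\lambda_i\lambda_j)S_{ij}]$, where $\Phi_{N\N}(z)=\frac{1-z}{1-z^N}=\bigl(\sum_{\ell<N}z^\ell\bigr)^{-1}$. The difficulty is that this Schur multiplier is not obviously bounded below on the Stein--Pick cone.

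So I would instead prove the lower bound directly, as follows. By the identity $G_{\N}=\sum_{j<N}A^jG_{N\N}A^j$, each summand satisfies $A^jG_{N\N}A^j\leq G_{N\N}^{(0)}$-type bounds only after a comparison; concretely, iterating the Stein identity $j$ times gives
\[
G_{N\N}\geq A^{N}G_{N\N}A^{N}\geq\cdots .
\]
The aim is the inequality $A^jG_{N\N}A^j\leq c_j G_{N\N}$. If it holds, then $G_{\N}\leq\bigl(\sum_{j<N} c_j\bigr)G_{N\N}$, and the lower bound $a\Id\leq G_{\N}$ transfers to $G_{N\N}$.

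To get $A^jG_{N\N}A^j\leq c_jG_{N\N}$ I would use the similarity $R=G_{N\N}^{1/2}AG_{N\N}^{-1/2}$ where invertibility is available. Where it is not, I would work with $G_{\N}$: the identity gives $G_{N\N}\geq B$ and $\sum_{j<N}A^jBA^j\leq G_{N\N}$ summed appropriately. This last step is where I expect the real work to lie.

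\emph{From $N\N$ to $\N$.} The lower bound is trivial: $G_{\N}\geq G_{N\N}$. For the upper bound, $\langle G_{\N}x,x\rangle=\sum_{j<N}\langle G_{N\N}A^jx,A^jx\rangle\leq N\lVert G_{N\N}\rVert\lVert x\rVert^2$, using $\lVert A\rVert\leq1$ from Lemma~\ref{lem:auto-contraction} applied to $(A^N,C)$.
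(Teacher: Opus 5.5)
There is a genuine gap: the only nontrivial step of the proposition, the lower bound for the $N\N$-system when the $\N$-system is exactly observable, is never proved. Your direction from the $N\N$-system to the $\N$-system is correct, and so is the upper bound in the other direction. Your contradiction setup is also the right one. If $\Gamma_Nx=\{CA^{Nk}x\}_{k\geq0}$ and $\lVert x_m\rVert=1$ with $\Gamma_Nx_m\to0$, then $G_{\N}=\sum_{j<N}A^jG_{N\N}A^j$ reduces everything to showing $\Gamma_NA^jx_m\to0$ for $0\leq j<N$.

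None of the routes you sketch reaches that step.
\begin{itemize}
\item The similarity $G_{N\N}^{1/2}AG_{N\N}^{-1/2}$ presupposes that $G_{N\N}$ is invertible, which is the conclusion.
\item The inequality $\sum_{j<N}A^jBA^j\leq G_{N\N}$ is false as stated. For $A=\lambda\in(0,1)$, $C=1$, $N\geq2$ it reads $\sum_{j<N}\lambda^{2j}\leq(1-\lambda^{2N})^{-1}$, which fails for small $\lambda$.
\item The Stein identity $G_{N\N}-A^NG_{N\N}A^N=B$ only gives $\lVert\Gamma_NA^{N}x\rVert\leq\lVert\Gamma_Nx\rVert$. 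That controls only powers that are multiples of $N$.
\end{itemize}
More fundamentally, every tool you use outside the commuting case holds verbatim for self-adjoint contractions: the decomposition of $G_{\N}$, the Stein identity, contractivity, and the similarity trick. Remark~\ref{rem:positivity-integer-step} shows the statement fails there. For $A=\diag(r,-r)$, $C(x_1,x_2)=x_1+x_2$, $N=2$, even your target $AG_{2\N}A\leq c\,G_{2\N}$ fails. Indeed $G_{2\N}=(1-r^4)^{-1}C^*C$ annihilates $(1,-1)$, while $A(1,-1)=(r,r)$ is not in its kernel. So positivity must be used to reach the intermediate powers $A^j$, and your proposal never uses it that way.

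The missing idea, which the paper uses, is that $\Gamma_N$ intertwines $A^N$ with the backward shift $L$ on $\ell^2(\N;Y)$. Thus $\Gamma_NA^N=L\Gamma_N$, so $\Gamma_Np(A^N)=p(L)\Gamma_N$, and $\Gamma_Np(A^N)x_m\to0$ for every polynomial $p$. Because $A\geq0$, functional calculus gives $A^j=(A^N)^{j/N}$. The function $z^{j/N}$ is a uniform limit of polynomials on $[0,1]\supseteq\sigma(A^N)$. Since $\Gamma_N$ is bounded (by the upper bound you already have) and $\lVert x_m\rVert=1$,
\[
\lVert\Gamma_NA^jx_m\rVert\leq\lVert\Gamma_N\rVert\sup_{0\leq z\leq1}\lvert z^{j/N}-p(z)\rvert+\lVert p(L)\rVert\,\lVert\Gamma_Nx_m\rVert .
\]
Hence $\Gamma_NA^jx_m\to0$ for each $j<N$. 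Then $\langle G_{\N}x_m,x_m\rangle=\sum_{j<N}\lVert\Gamma_NA^jx_m\rVert^2\to0$, which contradicts $G_{\N}\geq a\Id$. This is exactly where injectivity of $\lambda\mapsto\lambda^N$ on $[0,1]$ enters, and it is what breaks down for $\diag(r,-r)$.
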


\begin{proof}
Suppose first that the $N\N$-system is exactly observable, and
let $b_N$ be an upper observability bound. Since
$N\N\subset\N$, its lower observability bound is also a lower
bound for the $\N$-system. Moreover,
\[
 \sum_{n\geq0}\lVert CA^n x\rVert^2
 =\sum_{r=0}^{N-1}\sum_{k\geq0}\lVert CA^{Nk}A^r x\rVert^2
 \leq b_N\sum_{r=0}^{N-1}\lVert A^r\rVert^2\lVert x\rVert^2.
\]

Conversely, assume that the $\N$-system is exactly observable. By
Lemma~\ref{lem:auto-contraction}, $0\leq A\leq\Id$. The operator
$\Gamma_Nx=\{CA^{Nk}x\}_{k\geq0}$ is bounded. If it were not bounded below, there would exist a sequence of unit vectors
$\{x_j\}_{j\geq1}\subset H$ such that
$\lVert\Gamma_Nx_j\rVert\to0$. Let $L$ be the
backward shift on $\ell^2(\N;Y)$. Then
$\Gamma_NA^N=L\Gamma_N,
 \
 \Gamma_Np(A^N)=p(L)\Gamma_N$
for every polynomial $p$. Since $A^r=(A^N)^{r/N}$ and $z^{r/N}$ can be approximated uniformly
on $[0,1]$ by polynomials, the identity
$\Gamma_Np(A^N)=p(L)\Gamma_N$
implies
$
 \Gamma_NA^rx_j\rightarrow0$ for $0\leq r<N.$ Consequently,
\[
 \sum_{n\geq0}\lVert CA^n x_j\rVert^2
 =\sum_{r=0}^{N-1}\lVert\Gamma_NA^r x_j\rVert^2\longrightarrow0,
\]
contrary to the lower observability bound for the $\N$-system.
\end{proof}

\begin{remark}
\label{rem:positivity-integer-step}
The positivity assumption in Proposition~\ref{prop:integer-step} is
structural. In its proof, positivity allows us to recover the intermediate
powers from $A^N$ through continuous functional calculus:
\(
A^r=(A^N)^{r/N}
\)
for $0\leq r<N$. This mechanism fails for general self-adjoint operators,
because the map $\lambda\mapsto\lambda^N$ may identify distinct spectral
points.

Indeed, let
\(
A=\diag(r,-r)
\)
with $0<r<1$, and let
\(
C(x_1,x_2)=x_1+x_2.
\)
Then
\(
CA^{2k}x=r^{2k}(x_1+x_2)
\)
and
\(
CA^{2k+1}x=r^{2k+1}(x_1-x_2).
\)
Consequently,
\[
\sum_{n\geq0}|CA^nx|^2
=
\frac{|x_1+x_2|^2+r^2|x_1-x_2|^2}{1-r^4},
\]
so the \(\N\)-system is exactly observable. On the other hand,
\(
CA^{2k}(1,-1)=0
\)
for every $k\geq0$, and hence the $2\N$-system is not exactly observable.

Thus Proposition~\ref{prop:integer-step} fails if positivity is replaced
by self-adjointness. The obstruction is that even powers identify the
spectral points \(r\) and \(-r\).
\end{remark}

We now take the reference set to be the lattice \(T_0=N\N\). In this case
we write
\(
Q_{T,N}:=Q_{T,N\N},
\)
so that
\(
Q_{T,N}(z)=\sum_{k\geq0}|z^{t_k}-z^{Nk}|^2.
\)
The following theorem is the main perturbative conclusion.

\begin{theorem}
\label{thm:cumulative-transport}
Let
\(
T=\{t_k:k\in\N\}
\)
be enumerated increasingly with \(t_0=0\). If, for some
\(N\in\mathbb N^+\),
\(
\sum_{k=0}^{K}|t_k-Nk|^2=o(K^2),
\)
then
\(
Q_{T,N}(r)
=
\sum_{k\geq0}|r^{t_k}-r^{Nk}|^2
\longrightarrow0
\)
as \(r\uparrow1\). Consequently, \(T\in\U\).
\end{theorem}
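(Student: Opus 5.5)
The plan has two parts. First we show $Q_{T,N}(r)\to0$ by an elementary pointwise estimate and Lemma~\ref{lem:exponential-abel}. Then we get $T\in\U$ from Theorem~\ref{thm:natural-density}, with Theorem~\ref{thm:muntz-transfer} as an optional alternative route.

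Write $r=\e^{-u}$ with $u\downarrow0$ and $\delta_k=t_k-Nk$. For real $s,s'\geq0$, the mean value theorem gives
\[
|r^{s}-r^{s'}|=|\e^{-us}-\e^{-us'}|\leq u|s-s'|\,\e^{-u\min(s,s')}.
\]
The main obstacle is controlling $\min(t_k,Nk)$ from below, because the hypothesis gives no pointwise bound on $\delta_k$. We handle this first. By Cauchy--Schwarz, $\sum_{k\le K}|\delta_k|\le \sqrt{K+1}\,o(K)=o(K^{3/2})$. More directly, $|\delta_K|^2\le\sum_{k\le K}|\delta_k|^2=o(K^2)$, so $\delta_k=o(k)$. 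Hence there is $k_1$ such that $t_k\geq Nk/2$ for $k\geq k_1$, and therefore $\min(t_k,Nk)\geq Nk/2$ for $k\ge k_1$. The pointwise estimate then gives
\[
\sum_{k\geq k_1}|r^{t_k}-r^{Nk}|^2\leq u^2\sum_{k\geq0}|\delta_k|^2\e^{-Nuk}.
\]
Lemma~\ref{lem:exponential-abel} with $a_k=|\delta_k|^2$ and $c=N$ shows that the right-hand side tends to $0$. The finitely many terms with $k<k_1$ each tend to $0$ as $r\uparrow1$, since both $r^{t_k}$ and $r^{Nk}$ tend to $1$. This proves $Q_{T,N}(r)\to0$.

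For universality, $\delta_k=o(k)$ gives $t_k/k\to N$. It follows that $N_T(R)/R\to1/N$: the enumeration is increasing, so $N_T(t_k)=k+1$, and we interpolate between consecutive $t_k$. Since $t_0=0\in T$, Theorem~\ref{thm:natural-density} yields $T\in\U$.

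Alternatively, one can argue directly through Theorem~\ref{thm:muntz-transfer} with $T_0=N\N$. Its hypotheses are checked as follows. Exact observability of the reference system comes from Proposition~\ref{prop:integer-step}. Condition (i) is the full M\"untz theorem, using $\sum_k\tau_k/(1+\tau_k^2)=\infty$. Condition (ii) follows from $t_k\ge Nk/2$ eventually, which gives geometric tails uniformly on $[0,\rho]$. Condition (iii) follows from the boundary limit just proved together with continuity of $Q_{T,N}$ on $[0,1)$ and $Q_{T,N}(1)=0$; boundedness follows similarly. The delicate step in either route is the same: extracting the pointwise bound $\delta_k=o(k)$ from the cumulative hypothesis, so that the exponential weight in the mean value estimate is comparable to $\e^{-cuk}$.
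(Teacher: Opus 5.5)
Your proof is correct. For the boundary limit it follows the paper. You use the same mean value bound $|\e^{-ut_k}-\e^{-uNk}|\leq u|\delta_k|\e^{-u\min\{t_k,Nk\}}$ and the same extraction of $\delta_k=o(k)$ from the cumulative hypothesis. You also apply Lemma~\ref{lem:exponential-abel} to $a_k=\delta_k^2$. Your fixed bound $\min\{t_k,Nk\}\geq Nk/2$ in place of the paper's $(N-\varepsilon)k$ makes no difference, since the lemma gives $0$ for every $c>0$ when $A_K=o(K^2)$.

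The genuine difference is the final step. The paper proves $T\in\U$ by Theorem~\ref{thm:muntz-transfer} with $T_0=N\N$ together with Proposition~\ref{prop:integer-step}, which is your ``alternative'' route. There the decay of $Q_{T,N}$ at the spectral boundary is exactly hypothesis (iii) of the transfer principle, so it drives the conclusion. That route also gives the quantitative comparison $\lVert(\Gamma_T-\Gamma_{T_0})E_A([r,1])\rVert\to0$ with the fixed lattice, which is the point of Section~5.

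Your primary route through Theorem~\ref{thm:natural-density} is shorter and legitimate. It is not circular, since Section~4 does not use Section~5, and the paper's introduction notes that universality also follows this way. However, on that route the limit $Q_{T,N}(r)\to0$ plays no role, because only $t_k/k\to N$ is used. The ``consequently'' in the statement then follows from the hypothesis directly rather than from the boundary estimate.
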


\begin{proof}
Put \(\delta_k=t_k-Nk\). The hypothesis
\(
\sum_{k=0}^{K}|t_k-Nk|^2=o(K^2)
\)
implies \(\delta_k=o(k)\), and hence \(t_k/k\to N\). Consequently,
\(\sum_{k\geq1}t_k/(1+t_k^2)=\infty\), so the full M\"untz theorem gives
density of \(\operatorname{span}\{z^{t_k}:k\geq0\}\) in \(C[0,1]\).

Put \(r=\e^{-u}\). By the mean value theorem,
\(
|\e^{-ut_k}-\e^{-uNk}|
\leq
u|\delta_k|\e^{-u\min\{t_k,Nk\}}.
\)
For every \(\varepsilon>0\), we have
\(\min\{t_k,Nk\}\geq(N-\varepsilon)k\) for all sufficiently large \(k\).
Applying Lemma~\ref{lem:exponential-abel} to \(a_k=\delta_k^2\) therefore
gives
\(
\sum_{k\geq0}|\e^{-ut_k}-\e^{-uNk}|^2\to0
\)
as \(u\downarrow0\). Thus \(Q_{T,N}(r)\to0\) as \(r\uparrow1\).

Since \(t_k/k\to N\), there exist \(c_0>0\) and \(k_0\) such that
\(t_k\geq c_0k\) for \(k\geq k_0\). Hence, for every \(0<\rho<1\),
both
\(
\sup_{0\leq r\leq\rho}\sum_{k\geq K}r^{2Nk}\to0
\)
and
\(
\sup_{0\leq r\leq\rho}
\sum_{k\geq K}|r^{t_k}-r^{Nk}|^2\to0
\)
as \(K\to\infty\). Together with the boundary limit, this also gives
\(\sup_{0\leq r<1}Q_{T,N}(r)<\infty\).

Theorem~\ref{thm:muntz-transfer} now applies with \(T_0=N\N\). By
Proposition~\ref{prop:integer-step}, the \(N\N\)-system is exactly
observable whenever the integer system is. Hence \(T\in\U\).
\end{proof}

Bounded perturbations of a lattice are an immediate consequence.

\begin{corollary}
\label{cor:bounded-jitter}
Let
\(
T=\{t_k:k\in\N\}
\)
be enumerated increasingly. If \(t_0=0\) and
\(
\sup_{k\geq0}|t_k-Nk|<\infty
\)
for some \(N\in\mathbb N^+\), then
\(
Q_{T,N}(r)\rightarrow0\) as \(r\uparrow1\),
and hence \(T\in\U\).
\end{corollary}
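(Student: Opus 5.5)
The plan is to reduce Corollary~\ref{cor:bounded-jitter} directly to Theorem~\ref{thm:cumulative-transport}. Put \(\delta_k=t_k-Nk\) and \(\Delta:=\sup_{k\geq0}|\delta_k|<\infty\). Then
\[
\sum_{k=0}^{K}|t_k-Nk|^2\leq(K+1)\Delta^2=O(K)=o(K^2),
\]
and \(t_0=0\) is assumed. The set \(T\) is enumerated increasingly, as that theorem requires. Thus every hypothesis of Theorem~\ref{thm:cumulative-transport} holds. It yields both \(Q_{T,N}(r)\to0\) as \(r\uparrow1\) and \(T\in\U\).

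For a self-contained check of the boundary limit, I would also give the direct estimate. Write \(r=\e^{-u}\). Since \(|\delta_k|\leq\Delta\), we have \(\min\{t_k,Nk\}\geq Nk-\Delta\). The mean value theorem then gives
\[
|r^{t_k}-r^{Nk}|\leq u\Delta\,\e^{u\Delta}\e^{-uNk}.
\]
Summing the squares, we get
\[
Q_{T,N}(\e^{-u})\leq u^2\Delta^2\e^{2u\Delta}(1-\e^{-2uN})^{-1}=O(u)\to0
\]
as \(u\downarrow0\). This also shows that \(\sup_{0\leq r<1}Q_{T,N}(r)<\infty\), because \(Q_{T,N}\) is bounded on \([0,\rho]\) by the tail estimates already contained in the proof of the theorem.

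I expect no real obstacle. The only point worth stating explicitly is that a uniformly bounded sequence has cumulative squared sum linear in \(K\), hence \(o(K^2)\). The remaining ingredients, namely M\"untz density from \(t_k/k\to N\), the interior tail estimates, and the comparison with \(N\N\) via Proposition~\ref{prop:integer-step}, are all supplied inside Theorem~\ref{thm:cumulative-transport}.
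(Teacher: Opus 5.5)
Your proposal is correct and matches the paper's proof: bounded deviations give $\sum_{k=0}^{K}|t_k-Nk|^2=O(K)=o(K^2)$, so Theorem~\ref{thm:cumulative-transport} applies directly and yields both conclusions. Your additional mean value estimate is also valid. It gives the sharper rate $Q_{T,N}(\e^{-u})=O(u)$ as $u\downarrow0$, which the paper does not state.
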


\begin{proof}
The hypothesis gives
\(\sum_{k=0}^{K}|t_k-Nk|^2=O(K)=o(K^2)\), so
Theorem~\ref{thm:cumulative-transport} applies.
\end{proof}

\subsection{Fixed lattice comparison and universal sampling}

The cumulative criterion allows highly nonuniform perturbations. We now
compare it with pointwise control relative to the fixed lattice \(N\N\).
For
\(
T=\{t_k:k\in\N\}
\)
write
\(
\delta_k=t_k-Nk
\)
and define
\(
q^*(T,N)=\limsup_{r\uparrow1}Q_{T,N}(r).
\)
The first result shows that \(\sqrt{k}\) is the critical scale for the
quadratic comparison function \(Q_{T,N}\).
\begin{theorem}
\label{thm:square-root-radius}
If
\(
D_\infty
=
\limsup_{k\to\infty}
\frac{|t_k-Nk|}{\sqrt{k}}
<\infty,
\)
then
\(
q^*(T,N)
\leq
\frac{D_\infty^2}{4N^2}.
\)
If \((t_k-Nk)/\sqrt{k}\to d\), then
\(\lim_{r\uparrow1}Q_{T,N}(r)=d^2/(4N^2)\).
\end{theorem}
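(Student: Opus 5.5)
The plan is to linearize each term of \(Q_{T,N}\) in the spectral variable near the boundary and then reduce everything to an Abel-type sum of \(\delta_k^2=|t_k-Nk|^2\) against a geometric weight. Put \(r=\e^{-u}\) with \(u\downarrow0\) and \(\delta_k=t_k-Nk\), so that
\(
Q_{T,N}(\e^{-u})=\sum_{k\geq0}|\e^{-ut_k}-\e^{-uNk}|^2 .
\)
The heuristic is that the effective indices are \(k\asymp1/u\). On that range \(u|\delta_k|\lesssim D_\infty u\sqrt{k}\asymp D_\infty\sqrt u\to0\), so each term should behave like \(u^2\delta_k^2\e^{-2uNk}\). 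The sum should then behave like
\[
u^2 D_\infty^2\sum_k k\,\e^{-2uNk}\sim \frac{D_\infty^2}{4N^2}.
\]

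For the upper bound, fix \(\varepsilon>0\). Choose \(k_1\) such that \(|\delta_k|\leq(D_\infty+\varepsilon)\sqrt k\) and \(|\delta_k|\leq\varepsilon Nk\) for \(k\geq k_1\); the second condition is possible because \(\sqrt k=o(k)\). The finitely many terms with \(k<k_1\) are each \(O(u^2)\) by the mean value theorem, since \(t_k\) and \(Nk\) are fixed there, so they vanish in the limit. For \(k\geq k_1\), the mean value theorem gives
\(
|\e^{-ut_k}-\e^{-uNk}|\leq u|\delta_k|\e^{-u\min\{t_k,Nk\}}\leq u|\delta_k|\e^{-u(1-\varepsilon)Nk}.
\)
Hence
\[
\sum_{k\geq k_1}|\e^{-ut_k}-\e^{-uNk}|^2\leq (D_\infty+\varepsilon)^2u^2\sum_{k\geq0}k\,\e^{-2u(1-\varepsilon)Nk}.
\]
Since \(\sum_k k q^k=q/(1-q)^2\) and \(1-\e^{-2u(1-\varepsilon)N}\sim 2u(1-\varepsilon)N\), the right side tends to \((D_\infty+\varepsilon)^2/(4(1-\varepsilon)^2N^2)\) as \(u\downarrow0\). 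Letting \(\varepsilon\downarrow0\) gives \(q^*(T,N)\leq D_\infty^2/(4N^2)\).

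For the limit statement, assume \(\delta_k/\sqrt k\to d\). The upper bound above already yields \(\limsup Q_{T,N}\leq d^2/(4N^2)\), since here \(D_\infty=|d|\). For the matching lower bound, use the other side of the mean value theorem:
\(
|\e^{-ut_k}-\e^{-uNk}|\geq u|\delta_k|\e^{-u\max\{t_k,Nk\}}\geq u|\delta_k|\e^{-u(1+\varepsilon)Nk}
\)
for \(k\geq k_1\). This gives
\[
Q_{T,N}(\e^{-u})\geq u^2\sum_{k\geq k_1}\delta_k^2\,\e^{-2u(1+\varepsilon)Nk}.
\]
Write \(\delta_k^2=d^2k+e_k\) with \(e_k=o(k)\). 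Then \(\sum_{k\leq K}|e_k|=o(K^2)\), and Lemma~\ref{lem:exponential-abel} applied to \(a_k=|e_k|\) with \(c=2(1+\varepsilon)N\) shows that \(u^2\sum_k|e_k|\e^{-cuk}\to0\). The remaining main term tends to \(d^2/(4(1+\varepsilon)^2N^2)\). Letting \(\varepsilon\downarrow0\) gives \(\liminf Q_{T,N}\geq d^2/(4N^2)\), and the two bounds give the limit.

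The only delicate point is controlling the exponential factor \(\e^{\pm u|\delta_k|}\) uniformly in \(k\) rather than only on \(k\asymp1/u\). I handle this by absorbing it into the weight through the bound \(|\delta_k|\leq\varepsilon Nk\) for large \(k\). That costs only the factor \((1\pm\varepsilon)^{-2}\), which disappears as \(\varepsilon\downarrow0\); the finitely many small \(k\) are harmless.
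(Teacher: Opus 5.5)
Your proof is correct. For the first assertion it is essentially the paper's argument: the same mean value bound \(|\e^{-ut_k}-\e^{-uNk}|\leq u|\delta_k|\e^{-u\min\{t_k,Nk\}}\) followed by an Abel-type summation. The only difference is that you sum the pointwise bound \(\delta_k^2\leq(D_\infty+\varepsilon)^2k\) in closed form. The paper instead passes through the cumulative bound \(\limsup_K(K+1)^{-2}\sum_{k\leq K}\delta_k^2\leq D_\infty^2/2\) and Lemma~\ref{lem:exponential-abel}. As a by-product, the paper's route gives \(q^*(T,N)\leq(2N^2)^{-1}\limsup_K(K+1)^{-2}\sum_{k\leq K}\delta_k^2\) whenever \(\delta_k=o(k)\), which is the cumulative viewpoint behind Theorem~\ref{thm:cumulative-transport} and Example~\ref{ex:sparse-super-root}. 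For the limit statement your route genuinely differs. The paper factors \(Q_{T,N}(\e^{-u})=u^2\sum_k\delta_k^2\e^{-2Nku}\bigl|(\e^{-u\delta_k}-1)/(u\delta_k)\bigr|^2\) and shows that the difference quotient tends to \(1\) uniformly on \(k\leq R/u\). It then recognizes a Riemann sum for \(d^2\int_0^Rx\e^{-2Nx}\dd x\) and bounds the tail \(k>R/u\) separately. You instead squeeze. The two-sided mean value bounds, combined with \(|\delta_k|\leq\varepsilon Nk\), replace the weight by \(\e^{-2u(1\pm\varepsilon)Nk}\). The upper half is recycled from the first assertion. The lower half follows by writing \(\delta_k^2=d^2k+e_k\) with \(e_k=o(k)\) and disposing of the error with Lemma~\ref{lem:exponential-abel}. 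This avoids any Riemann-sum limit or uniform control of the difference quotient, and reduces everything to \(\sum_kkq^k=q/(1-q)^2\) plus a lemma already in the paper. In exchange, the paper's version exhibits the limit directly as \(d^2\int_0^\infty x\e^{-2Nx}\dd x\). The same template also yields Proposition~\ref{prop:power-kernel} for \(\delta_k=ck^\gamma\), where no closed-form geometric sum is available.
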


\begin{proof}
Put \(\delta_k=t_k-Nk\). The hypothesis implies \(\delta_k=o(k)\) and
\[
\limsup_{K\to\infty}
\frac{\sum_{k=0}^{K}\delta_k^2}{(K+1)^2}
\leq
\frac{D_\infty^2}{2}.
\]
With \(r=\e^{-u}\), the same mean value estimate as above, followed by
Lemma~\ref{lem:exponential-abel}, gives
\(
q^*(T,N)
\leq
\frac{D_\infty^2}{4N^2}
\).

Suppose now that \(\delta_k/\sqrt{k}\to d\). Then
\[
Q_{T,N}(\e^{-u})
=
u^2\sum_{k\geq0}\delta_k^2\e^{-2Nku}
\left|
\frac{\e^{-u\delta_k}-1}{u\delta_k}
\right|^2,
\]
where the last factor is interpreted as \(1\) when \(\delta_k=0\).
On \(k\leq R/u\), this factor converges uniformly to \(1\), and the
corresponding Riemann sums converge to
\(d^2\int_0^R x\e^{-2Nx}\dd x\). The remaining tail is bounded by a
constant multiple of \(u^2\sum_{k>R/u}k\e^{-Nku}\). Letting first
\(u\downarrow0\) and then \(R\to\infty\) yields
\[
\lim_{r\uparrow1}Q_{T,N}(r)
=
d^2\int_0^\infty x\e^{-2Nx}\dd x
=
\frac{d^2}{4N^2}.
\]
\end{proof}

Thus \(\sqrt{k}\) is the critical pointwise scale for regular comparison
with the fixed reference lattice \(N\N\). This threshold, however, is not the universal sampling threshold. The distinction becomes explicit for regular power drifts.
Let
\(
T=\{t_k:k\in\N\},
\)
where \(t_0=0\) and
\begin{equation}\label{eq:power-drift}
t_k=Nk+ck^\gamma,
\qquad k\geq1,
\end{equation}
with \(c>0\) and \(\gamma>0\).

\begin{proposition}
\label{prop:power-kernel}
If \(0<\gamma<1\), then, as \(u\downarrow0\),
\(
Q_{T,N}(\e^{-u})
\sim
\frac{c^2\Gamma(2\gamma+1)}
{(2N)^{2\gamma+1}}
u^{1-2\gamma}.
\)
Consequently,
\[
q^*(T,N)
=
\begin{cases}
0, & 0<\gamma<\tfrac12,\\[1mm]
\dfrac{c^2}{4N^2}, & \gamma=\tfrac12,\\[2mm]
+\infty, & \tfrac12<\gamma<1.
\end{cases}
\]
\end{proposition}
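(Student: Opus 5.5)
We follow the proof of the second half of Theorem~\ref{thm:square-root-radius}, now with \(\delta_k=ck^\gamma\) for \(k\geq1\) and \(\delta_0=0\). Put \(r=\e^{-u}\) and write
\[
Q_{T,N}(\e^{-u})
=
\sum_{k\geq1}\e^{-2Nku}\bigl(1-\e^{-uck^\gamma}\bigr)^2 .
\]
This identity is exact because \(t_k\geq Nk\). The heuristic is as follows. The weight \(\e^{-2Nku}\) localizes the sum to \(k\lesssim 1/u\). In that range \(uck^\gamma\lesssim cu^{1-\gamma}\to0\), since \(\gamma<1\). Hence \(1-\e^{-uck^\gamma}\sim uck^\gamma\), and the sum behaves like
\[
c^2u^2\sum_{k\geq1}k^{2\gamma}\e^{-2Nku}
\sim
c^2u^2\int_0^\infty s^{2\gamma}\e^{-2Nus}\dd s
=
\frac{c^2\Gamma(2\gamma+1)}{(2N)^{2\gamma+1}}\,u^{1-2\gamma}.
\]
The integral is evaluated by the substitution \(x=us\).

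To make this rigorous, we first obtain the upper bound. From \(1-\e^{-y}\leq y\) we get
\[
Q_{T,N}(\e^{-u})\leq c^2u^2\sum_{k\geq1}k^{2\gamma}\e^{-2Nku}.
\]
The right-hand side is a Riemann sum for \(c^2u^{1-2\gamma}\int_0^\infty x^{2\gamma}\e^{-2Nx}\dd x\) with mesh \(u\). Its asymptotic equivalence with that integral follows from the standard comparison of a sum with an integral. The summand \(x^{2\gamma}\e^{-2Nx}\) is unimodal and integrable, so the sum and the integral differ by \(O(\sup)\cdot u\), which gives a relative error of \(o(1)\).

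For the lower bound, we fix \(R>0\) and restrict to \(k\leq R/u\). There \(y_k=uck^\gamma\leq cR^\gamma u^{1-\gamma}\to0\) uniformly. Since \(1-\e^{-y}\geq y(1-y/2)\), the factor \((1-\e^{-y_k})^2/y_k^2\) tends to \(1\) uniformly in this range. The truncated Riemann sum then gives a liminf of at least
\[
\frac{c^2\int_0^R x^{2\gamma}\e^{-2Nx}\dd x}{\Gamma(2\gamma+1)(2N)^{-2\gamma-1}}
\]
after normalizing by \(u^{1-2\gamma}\). Letting \(R\to\infty\) matches the upper bound and proves the asymptotic equivalence.

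The values of \(q^*(T,N)\) then follow by reading off the exponent \(1-2\gamma\). For \(0<\gamma<1/2\) it is positive, so \(Q_{T,N}(\e^{-u})\to0\). For \(1/2<\gamma<1\) it is negative, so \(Q_{T,N}(\e^{-u})\to\infty\) and \(q^*=+\infty\). For \(\gamma=1/2\) the limit is
\[
\frac{c^2\Gamma(2)}{(2N)^2}=\frac{c^2}{4N^2},
\]
which agrees with Theorem~\ref{thm:square-root-radius} for \(d=c\). The only delicate step is the uniform control of the nonlinear factor \((1-\e^{-y})/y\). It requires \(\gamma<1\), so that \(uk^\gamma\to0\) on the effective range \(k\lesssim1/u\). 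The tail \(k>R/u\) needs no separate treatment, because the one-sided upper bound \(1-\e^{-y}\leq y\) already handles the whole sum.
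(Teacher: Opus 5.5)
Your proof is correct and follows essentially the same route as the paper. Both arguments factor out $\phi(y)=(1-\mathrm{e}^{-y})/y$, use that $\phi\to1$ uniformly on bounded ranges of $ku$ because $\gamma<1$, and control the remaining sum by the Riemann sum for $x^{2\gamma}\mathrm{e}^{-2Nx}$. Your split into a global upper bound via $\phi\le1$ and a truncated lower bound is simply a more explicit version of the paper's domination argument.
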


\begin{proof}
We have
\(
Q_{T,N}(\e^{-u})
=
\sum_{k\geq1}\e^{-2Nku}(1-\e^{-ck^\gamma u})^2.
\)
Let \(\phi(y)=(1-\e^{-y})/y\) for \(y>0\) and \(\phi(0)=1\). Then
\[
u^{2\gamma-1}Q_{T,N}(\e^{-u})
=
c^2u\sum_{k\geq1}(ku)^{2\gamma}\e^{-2Nku}
\phi\!\left(c(ku)^\gamma u^{1-\gamma}\right)^2.
\]
The factor involving \(\phi\) tends uniformly to \(1\) on bounded
\(ku\)-intervals, while the tail is dominated by a constant multiple of
\(x^{2\gamma}\e^{-2Nx}\). Hence
\[
\lim_{u\downarrow0}
u^{2\gamma-1}Q_{T,N}(\e^{-u})
=
c^2\int_0^\infty x^{2\gamma}\e^{-2Nx}\dd x
=
\frac{c^2\Gamma(2\gamma+1)}
{(2N)^{2\gamma+1}}.
\]
The three alternatives follow.
\end{proof}

The preceding proposition identifies \(\gamma=1/2\) as the fixed-lattice
comparison threshold. Natural density gives the larger threshold for universal sampling.

\begin{corollary}
\label{cor:power-universal-threshold}
Let \(T\) be defined by \eqref{eq:power-drift}. Then
\(T\in\U\) if and only if \(0<\gamma\leq1\). There is no smallness
restriction on \(c\) in the universal range.
\end{corollary}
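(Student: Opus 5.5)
The plan is to reduce everything to the natural density theorem, Theorem~\ref{thm:natural-density}, by computing the natural density of \(T\) for each \(\gamma\). First I will check that \(T\) is a legitimate locally finite time set, enumerated increasingly by \((t_k)\). For \(k\geq1\), the map \(k\mapsto Nk+ck^\gamma\) is strictly increasing because both summands increase, and \(t_1=N+c>0=t_0\). So \(t_k\to\infty\), local finiteness holds, and \(0\in T\) by construction. Moreover \(N_T(R)=\#\{k:t_k\leq R\}\), so the density question becomes a question about the inverse function of \(k\mapsto t_k\).

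Next I will split into three cases.

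\emph{Case \(0<\gamma<1\).} Here \(t_k/k=N+ck^{\gamma-1}\to N\). If \(t_k\leq R<t_{k+1}\), then \(N_T(R)=k+1\). Since \(t_{k+1}/t_k\to1\), this gives \(N_T(R)/R\to1/N\). Thus \(d(T)=1/N\in(0,\infty)\), and Theorem~\ref{thm:natural-density} yields \(T\in\U\).

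\emph{Case \(\gamma=1\).} Here \(t_k=(N+c)k\) for \(k\geq1\), so \(T=(N+c)\N\). Then \(d(T)=1/(N+c)\in(0,\infty)\), and again \(T\in\U\). Alternatively, Corollary~\ref{cor:bounded-jitter} applies after rescaling, but the density route is uniform. In neither case does the argument impose any condition on \(c>0\), which gives the final sentence of the statement. One could contrast this with Proposition~\ref{prop:power-kernel}, where \(q^*(T,N)=\infty\) for \(\gamma>1/2\); that remark is commentary only.

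\emph{Case \(\gamma>1\).} Here \(t_k\geq ck^\gamma\), so \(N_T(R)\leq 1+(R/c)^{1/\gamma}=o(R)\). Hence \(d(T)=0\) exists, and Theorem~\ref{thm:natural-density} gives \(T\notin\U\). Equivalently, the linear lower bound in Theorem~\ref{thm:abel-counting} fails.

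I do not expect any real obstacle. The only point needing care is justifying that \(N_T(R)/R\) actually converges rather than merely being bounded, since Theorem~\ref{thm:natural-density} requires that the natural density exists. I will handle this with the sandwich \(k/t_{k+1}\leq N_T(R)/R\leq (k+1)/t_k\) for \(t_k\leq R<t_{k+1}\).
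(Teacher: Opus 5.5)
Your proposal is correct and follows the paper's proof. You compute the natural density, $1/N$ for $0<\gamma<1$ and $1/(N+c)$ for $\gamma=1$, and apply Theorem~\ref{thm:natural-density}; for $\gamma>1$ you use $N_T(R)=O(R^{1/\gamma})=o(R)$ to rule out universality via Theorem~\ref{thm:abel-counting}, or equivalently via $d(T)=0$.

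One small correction: drop your aside that Corollary~\ref{cor:bounded-jitter} handles $\gamma=1$ after rescaling. That corollary requires an integer step, and $(N+c)\N$ is not a bounded perturbation of any integer lattice when $N+c\notin\N$. Nor is scale invariance of $\U$ available without the density theorem itself.
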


\begin{proof}
If \(0<\gamma<1\), then \(t_k/k\to N\), so \(T\) has natural density
\(1/N\). If \(\gamma=1\), it has natural density \(1/(N+c)\). Thus
Theorem~\ref{thm:natural-density} gives \(T\in\U\) for
\(0<\gamma\leq1\). If \(\gamma>1\), then
\(N_T(R)\asymp R^{1/\gamma}=o(R)\), contradicting
Theorem~\ref{thm:abel-counting}.
\end{proof}

Hence the threshold for comparison with a fixed reference lattice and the
threshold for universal sampling are genuinely different:
\[
\gamma=\frac12
\quad\text{for comparison with the fixed lattice},
\qquad
\gamma=1
\quad\text{for universal sampling}.
\]
In particular, if \(1/2<\gamma<1\), then
\(Q_{T,N}(r)\to\infty\) as \(r\uparrow1\), although \(T\in\U\).

The next result shows that even pointwise square root control is not
necessary for universality. Large deviations may be allowed when their
cumulative squared deviations grow sufficiently slowly.

\begin{example}
\label{ex:sparse-super-root}
Fix \(1/2<\beta<2/3\). We construct a locally finite time set
\(
T=\{t_k:k\in\N\}
\)
with
\(
t_k=Nk+\delta_k,
\quad
t_0=0,
\)
such that
\(
\sup_k(t_{k+1}-t_k)=\infty,
\
\limsup_{k\to\infty}\frac{\delta_k}{\sqrt{k}}=\infty,
\)
while
\(
\sum_{k=0}^{K}\delta_k^2=o(K^2).
\)
Consequently, \(T\in\U\).

Indeed, choose rapidly increasing integers \(K_m\), set
\(H_m=K_m^\beta\) and
\(L_m=\lceil2H_m/N\rceil\), and define
\(
\delta_{K_m+j}
=
\max\{H_m-Nj/2,0\}\) for \(0\leq j\leq L_m,\)
with \(\delta_k=0\) outside these intervals. Choose the \(K_m\) so that
the intervals are pairwise disjoint and, if \(S_{m-1}\) denotes the total
squared deviation over the preceding intervals, then
\(
S_{m-1}\leq K_m^2/m
\).

Along each descending interval,
\(
\delta_{k+1}-\delta_k\geq-N/2,
\)
and hence
\(
t_{k+1}-t_k\geq N/2>0.
\)
Moreover, the contribution of the \(m\)th interval to the cumulative squared
deviation satisfies
\[
\sum_{j=0}^{L_m}\delta_{K_m+j}^2
\leq
(L_m+1)H_m^2
\leq
C_NK_m^{3\beta}
=
o(K_m^2),
\]
because \(3\beta<2\). Together with the choice of \(K_m\), this gives
\(\sum_{k=0}^{K}\delta_k^2=o(K^2)\). At the left endpoints,
\[
\frac{\delta_{K_m}}{\sqrt{K_m}}
=
K_m^{\beta-1/2}\longrightarrow\infty,
\qquad
t_{K_m}-t_{K_m-1}
=
N+H_m\longrightarrow\infty.
\]
Thus Theorem~\ref{thm:cumulative-transport} gives \(T\in\U\).
\end{example}

Thus pointwise square root control is not necessary. Beyond that scale,
the relevant quantity is the cumulative squared lattice deviation visible
at the boundary scale \(k\asymp(1-r)^{-1}\).

\begin{remark}
\label{rem:wrong-reference-lattice}
The comparison function depends on the chosen reference lattice and therefore
cannot characterize universality. Indeed, let
\(M,N\in\mathbb N^+\) with \(M\neq N\), and take \(T=M\N\). By
Proposition~\ref{prop:integer-step}, \(T\in\U\). Relative to the reference
lattice \(N\N\), however,
\[
Q_{T,N}(r)
=
\frac{1}{1-r^{2M}}
+
\frac{1}{1-r^{2N}}
-
\frac{2}{1-r^{M+N}}.
\]
As \(r\uparrow1\), the coefficient of \((1-r)^{-1}\) is
\[
\frac{1}{2M}
+
\frac{1}{2N}
-
\frac{2}{M+N}
=
\frac{(M-N)^2}{2MN(M+N)}
>0.
\]
Hence \(Q_{T,N}(r)\to\infty\), even though \(T\in\U\).
Thus comparison with a fixed reference lattice is genuinely reference
dependent and is strictly stronger than universal sampling.
\end{remark}

The results of this section therefore separate three distinct levels of
stability. The cumulative condition above yields quantitative comparison
with a fixed lattice; the square root scale is critical for such comparison,
whereas universal sampling can persist on substantially larger scales. Regular power drifts remain universal
up to the linear scale, while sufficiently sparse perturbations may exceed
the square root scale without destroying universality.

\section{Applications to multi-orbit frames}

We conclude by translating the preceding observability results into the
language of operator orbit frames. Besides universal nonuniform sampling
results for finite and countable families of positive operator orbits,
this also yields a finite orbit realization of the obstruction from
Section~3.

Let $I$ be finite or countable and let
\(
\mathcal F=(f_i)_{i\in I}
\)
be an indexed family in $H$. Define
\(
C_{\mathcal F}x=(\langle x,f_i\rangle)_{i\in I}.
\)
Whenever the integer orbit family is Bessel, its subfamily at time \(0\)
\(
\{f_i:i\in I\}
\)
is Bessel, so
\(C_{\mathcal F}:H\to\ell^2(I)\)
is bounded. Since $A=A^*$,
\begin{equation}\label{eq:orbit-observation-identity}
\sum_{n\geq0}\lVert C_{\mathcal F}A^nx\rVert_{\ell^2(I)}^2
=
\sum_{n\geq0}\sum_{i\in I}
|\langle x,A^nf_i\rangle|^2.
\end{equation}
For a time set $T$, write
\(
\mathcal O_T(A,\mathcal F)
=
\{A^tf_i:t\in T,\ i\in I\}.
\)

The $T$-version of \eqref{eq:orbit-observation-identity} shows that
$\mathcal O_T(A,\mathcal F)$ is a frame for $H$ if and only if
\(
G_{T,\mathcal F}
=
\sum_{t\in T}
A^tC_{\mathcal F}^*C_{\mathcal F}A^t
\)
converges strongly to a bounded positive invertible operator. Thus the observability results obtained above apply directly to finite and
countable multi-orbit frames.

\begin{theorem}
\label{thm:universal-multiorbit}
Let $T\subset\Rplus$ be locally finite. Then the following are equivalent:
\begin{enumerate}[label=\textup{(\roman*)}]
\item
$T\in\U$;

\item
for every separable Hilbert space $H$, every positive
$A\in\Bcal(H)$, and every finite or countable indexed family
\(
\mathcal F=(f_i)_{i\in I}
\)
in $H$,
\(
\mathcal O_{\N}(A,\mathcal F)\text{ is a frame}
\Longrightarrow
\mathcal O_T(A,\mathcal F)\text{ is a frame}.
\)
\end{enumerate}
\end{theorem}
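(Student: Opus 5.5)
The plan is to reduce both implications to the observation-operator formulation through the identity \eqref{eq:orbit-observation-identity} and its \(T\)-version. For (i)\(\Rightarrow\)(ii), take \(A\geq0\) and \(\mathcal F\) with \(\mathcal O_{\N}(A,\mathcal F)\) a frame. Its Bessel property at time \(0\) makes \(\{f_i\}\) Bessel, so \(C_{\mathcal F}:H\to\ell^2(I)\) is bounded, and \(Y=\ell^2(I)\) is separable because \(I\) is countable. By \eqref{eq:orbit-observation-identity}, the frame property of \(\mathcal O_{\N}(A,\mathcal F)\) is exactly the exact observability of the \(\N\)-system for \((A,C_{\mathcal F})\). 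Since \(T\in\U\), the \(T\)-system is exactly observable. Using self-adjointness of \(A^t\), we have \(\lVert C_{\mathcal F}A^tx\rVert^2=\sum_i|\langle x,A^tf_i\rangle|^2\), and summing over \(t\in T\) (Tonelli) shows that \(\mathcal O_T(A,\mathcal F)\) is a frame.

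For (ii)\(\Rightarrow\)(i), let \(H,Y\) be separable, \(A\geq0\), and \(C\in\Bcal(H,Y)\) with the \(\N\)-system exactly observable. Fix an orthonormal basis \((y_i)_{i\in I}\) of \(Y\), which is finite or countable, and set \(f_i=C^*y_i\). Then
\[
\langle x,A^tf_i\rangle=\langle A^tx,C^*y_i\rangle=\langle CA^tx,y_i\rangle,
\]
so Parseval gives \(\sum_i|\langle x,A^tf_i\rangle|^2=\lVert CA^tx\rVert^2\) for every \(t\geq0\). In other words, \(C_{\mathcal F}=UC\), where \(U:Y\to\ell^2(I)\) is the unitary coordinate map. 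Hence the \(\N\)-system for \((A,C)\) is exactly observable if and only if \(\mathcal O_{\N}(A,\mathcal F)\) is a frame, and the same holds with \(T\) in place of \(\N\). Applying (ii) yields exact observability of the \(T\)-system, so \(T\in\U\). Any zero vectors \(f_i\) are harmless: frames are understood as indexed families, and zeros contribute nothing to either side.

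There is no substantial obstacle here; the argument is bookkeeping. The only points needing care are the following.
\begin{itemize}
\item Boundedness of \(C_{\mathcal F}\) in the forward direction, which follows from the time-\(0\) Bessel bound.
\item Separability of \(Y\), which the definition of \(\U\) requires and which is why \(I\) is restricted to finite or countable sets.
\item Interchanging the sums over \(t\) and \(i\) for nonnegative terms.
\end{itemize}
Both directions use the same frame bounds, so no constants are lost.
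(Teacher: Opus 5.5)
Your proposal is correct and follows the paper's proof: (i)$\Rightarrow$(ii) applies \eqref{eq:orbit-observation-identity} and its $T$-version to the pair $(A,C_{\mathcal F})$, and (ii)$\Rightarrow$(i) takes $f_i=C^*e_i$ for an orthonormal basis $(e_i)$ of $Y$, so that $C_{\mathcal F}$ is $C$ followed by the coordinate isometry onto $\ell^2(I)$. Your remarks on the boundedness of $C_{\mathcal F}$ (from the time-$0$ Bessel bound) and on the separability of $\ell^2(I)$ spell out points the paper mentions only in passing.
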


\begin{proof}
If $T\in\U$, then \eqref{eq:orbit-observation-identity} identifies the frame inequalities at integer times and at the times in $T$ with the
corresponding observability inequalities for $(A,C_{\mathcal F})$.
Hence \textup{(i)} implies \textup{(ii)}.

Conversely, let $C:H\to Y$ be an arbitrary observation operator and choose
an orthonormal basis
\(
(e_i)_{i\in I}
\)
of $Y$. Set
\(
f_i=C^*e_i
\)
and
\(
\mathcal F=(f_i)_{i\in I}.
\)
Then
\(
C_{\mathcal F}x=(\langle Cx,e_i\rangle)_{i\in I},
\)
so $C_{\mathcal F}$ is $C$ followed by the coordinate isometry from $Y$
into $\ell^2(I)$. Therefore \textup{(ii)} implies universal preservation
for every observation system, and hence $T\in\U$.
\end{proof}

The obstruction from Section~3 already occurs for any prescribed finite
number of generating orbits, including a single orbit. This also gives a
stronger form of the recent counterexamples to the conjecture of
Aldroubi, Cabrelli, Krishtal, and Molter \cite{AldroubiSurvey}, which
asserted that the M\"untz--Sz\'asz divergence condition, together with
the Bessel property, should preserve a positive diagonal Carleson frame.
Gallardo-Guti\'errez and Partington
\cite{GallardoPartingtonOperatorOrbits} disproved this conjecture by
examples for which the necessary scalar lower bound fails. In contrast,
the time set constructed in Section~3 satisfies
\(N_T(R)\asymp R\), and hence the corresponding scalar Abel quantity is
uniformly bounded above and below, while the sampled orbit is still
Bessel but not a frame. The following result shows that this stronger
obstruction persists for every prescribed finite number of generating
orbits.

\begin{theorem}
\label{thm:mellin-finite-orbit}
Let $T$ be the set defined in \eqref{eq:clustered-T}. For every
$m\in\mathbb N^+$, there exist a separable Hilbert space $H^{(m)}$, a
positive diagonal contraction $A^{(m)}\in\Bcal(H^{(m)})$, and an
$m$-element family
\(
\mathcal F^{(m)}
=
\bigl(f_1^{(m)},\ldots,f_m^{(m)}\bigr)
\subset H^{(m)}
\)
such that
\(
\mathcal O_{\N}(A^{(m)},\mathcal F^{(m)})
\)
is a frame for $H^{(m)}$, whereas
\(
\mathcal O_T(A^{(m)},\mathcal F^{(m)})
\)
is Bessel but not a frame.

Moreover, every eigenvalue of $A^{(m)}$ has multiplicity $m$, and $m$ is
the minimal number of generating orbits for an integer time orbit frame
generated by $A^{(m)}$. In particular, $m=1$ gives a singly generated
positive diagonal Carleson frame whose $T$-sampled orbit is Bessel but
not a frame.
\end{theorem}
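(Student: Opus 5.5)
The plan is to reassemble the counterexample of Theorem~\ref{thm:mellin-counterexample} so that the countably many rank-one observation functionals $C_{M,L_M}$ collapse into a single functional (and, for general $m$, into $m$ of them), at the price of cross terms between blocks. Those cross terms are then made small by separating spectral scales. Throughout, $m_q$, $M_q$, $x^{(M)}$, $K_M$ and $\mathcal M$ are as in Lemma~\ref{lem:continuous-riesz} and the proof of Theorem~\ref{thm:mellin-counterexample}.

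\emph{Step 1 (single orbit, $m=1$).} Keep the blocks $H_M$, the eigenvalues $\e^{-a_i/q^{L_M}}=\e^{-q^{-(i+L_M)}}$ for $|i|\leq M$, and the vectors $f_M:=C_{M,L_M}^*1\in H_M$. The sequence $L_M$ is now chosen to grow so fast that the exponent windows $\{i+L_M:|i|\leq M\}$ are pairwise disjoint, with gaps $g_M\to\infty$ as fast as needed. Put $H=\bigoplus_M H_M$, $A=\bigoplus_M A_{M,L_M}$ (diagonal, with simple eigenvalues in $(0,1)$), and $f=\sum_M 2^{-0}f_M$. Normalisation is harmless here because $\|f_M\|^2\leq\sum_i(1-\e^{-2a_i/q^{L_M}})$, which is summable once the $L_M$ are large. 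Then $C_fx=\sum_M\langle x_M,f_M\rangle$. The integer Gramian $G_{\N}$ has diagonal blocks $S_{M,L_M}$. Its off-diagonal entries between eigenvectors of scales $\alpha=q^{-(i+L_M)}$ and $\alpha'=q^{-(j+L_{M'})}$ are approximately
\[
\frac{2\sqrt{\alpha\alpha'}}{\alpha+\alpha'}\leq 2\sqrt{\alpha'/\alpha}\qquad(\alpha'<\alpha).
\]
The $T$-Gramian entries are
\[
\sqrt{(1-\e^{-2\alpha})(1-\e^{-2\alpha'})}\sum_{t\in T}\e^{-(\alpha+\alpha')t}.
\]
Using only the upper bound $L_T(u)\leq K(1+1/u)$ from the proof of Theorem~\ref{thm:abel-counting}, these are also $O(\sqrt{\alpha'/\alpha})$.

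\emph{Step 2 (Schur test; the main obstacle).} I must show that the off-block-diagonal parts $E_{\N}$ and $E_T$ have operator norm at most $\eta$, with $\eta$ as small as desired, uniformly in all blocks. Any two eigenvectors lying in different blocks have exponent distance at least the intervening gap. Hence the row sums of $|E|$ are bounded by
\[
2\sum_{d\geq g_{\min}}2q^{-d/2},
\]
which is small if every gap exceeds a large $g_0$. I expect the delicate part to be checking that the $T$-kernel bound holds uniformly, including very small scales where $1/u$ is not dominant. For that case I would use $L_T(u)\leq 2K/u$ for $u\leq 1$ and crude bounds otherwise, both of which still give $O(\sqrt{\alpha'/\alpha})$.

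Granting Step 2, the conclusion is as follows. We have $G_{\N}\geq(m_q/2-\eta)\Id$ and $G_{\N}$ is bounded, so $\mathcal O_{\N}(A,\{f\})$ is a frame. The Gramian $G_T$ is bounded, so the $T$-orbit is Bessel. Finally,
\[
\langle G_Tx^{(M)},x^{(M)}\rangle\leq\langle G_{M,L_M}x^{(M)},x^{(M)}\rangle+\eta\to\eta,
\]
and since $\eta$ is arbitrary, a choice with $\eta$ below the limiting infimum gives failure of the lower bound. More cleanly, I would make the gaps grow so that the cross terms touching block $M$ tend to $0$, which gives $\langle G_Tx^{(M)},x^{(M)}\rangle\to0$ directly.

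\emph{Step 3 (general $m$, multiplicity and minimality).} Take $H^{(m)}=H\otimes\mathbb C^m$, $A^{(m)}=A\otimes\Id_m$ and $f_k^{(m)}=f\otimes e_k$. The system is an orthogonal sum of $m$ copies of the $m=1$ example, so the frame, Bessel and non-frame properties transfer, and every eigenvalue has multiplicity $m$. For minimality, let $P_\lambda$ be the eigenprojection. Applying it to a frame $\{A^ng_i\}$ gives the frame $\{\lambda^nP_\lambda g_i\}$ for $P_\lambda H$, whose span has dimension at most the number of generators. Hence at least $m$ generators are needed. For $m=1$, $A$ is a positive diagonal operator with eigenvalues in $(0,1)$ and $\{A^nf\}$ is a frame, so by \cite[Theorem~1.1]{ChristensenEtAlMystery} it is a Carleson frame.
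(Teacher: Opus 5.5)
Your proposal is correct, and the construction is the paper's. It uses the same diagonal operator with eigenvalues $\e^{-q^{-k}}$ on disjoint windows $J_M=\{L_M-M,\dots,L_M+M\}$ and the same generator $b=\sum_{k\in J}\sqrt{1-\lambda_k^2}\,e_k$, which is your $\sum_M f_M$. The test vectors $x^{(M)}$, the tensoring with $\mathbb C^m$, and the minimality and Carleson arguments are also essentially the same.

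Where you differ is in how the entries between different windows are handled. You treat them as a perturbation of the block-diagonal operators $\bigoplus_M S_{M,L_M}$ and $\bigoplus_M G_{M,L_M}$. You make them small by spacing the windows, using the $O(q^{-d/2})$ bound in the exponent distance $d$ and a Schur test. This needs an extra growth condition on the gaps, and it reuses the per-block estimates from the proof of Theorem~\ref{thm:mellin-counterexample}.

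The paper does not need the windows to be far apart. It compares the whole integer frame operator $S$ on $\ell^2(J)$ with the compression $R_J$ of the full Toeplitz operator $\bigl[\sech((k-\ell)\log q/2)\bigr]$. That operator satisfies $m_q\Id\leq R\leq M_q\Id$ on all of $\ell^2(\mathbb Z)$ by Lemma~\ref{lem:continuous-riesz}(i). So the cross-window entries are not errors; they belong to a kernel that is bounded below globally. The only error is $|S_{k\ell}-R_{k\ell}|\leq C_q\sqrt{u_ku_\ell}$, whose norm is at most $C_q\sum_{k\in J}u_k<m_q/2$ once all windows lie beyond $K_0$. The Bessel property likewise follows from a single Schur test with the global majorant $C_T\sech((k-\ell)\log q/2)$.

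The paper's route gives the explicit bounds $m_q/2$ and $M_q+m_q/2$ and needs only disjointness of the windows. Yours is a more literal ``direct sum plus almost-orthogonal gluing'' of Theorem~\ref{thm:mellin-counterexample}, at the cost of the spacing condition.

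One step in your Step~2 should be corrected. You argue $\langle G_Tx^{(M)},x^{(M)}\rangle\leq\langle G_{M,L_M}x^{(M)},x^{(M)}\rangle+\eta\to\eta$ and then that $\eta$ is arbitrary. That does not work as stated, because $\eta$ is fixed by the construction (the gaps) before the lower bound of $G_T$ is tested.

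Neither this nor your fallback of shrinking the cross terms is needed. Since $x^{(M)}$ is supported in a single window, the off-block part of $G_T$ contributes nothing to its quadratic form. Hence $\langle G_Tx^{(M)},x^{(M)}\rangle=\langle G_{M,L_M}x^{(M)},x^{(M)}\rangle\to0$ exactly, which is how the paper concludes.
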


\begin{proof}
We first construct a singly generated system. Retain the notation
$q=16$, $a_i=q^{-i}$, $K_M$, $G_{M,L}$, and $x^{(M)}$ from the proof of
Theorem~\ref{thm:mellin-counterexample}. For $k\in\mathbb N^+$, set
\(
u_k=q^{-k},
\lambda_k=\e^{-u_k}.
\)

For sufficiently small $u,v>0$,
\(
\left(\frac{1-\e^{-2u}}{2u}\right)^{1/2}
=
1+O(u),
\frac{u+v}{1-\e^{-(u+v)}}
=
1+O(u+v).
\)
Hence there exist $K_1\in\mathbb N^+$ and $C_q<\infty$ such that
\[
\left|
\frac{
\sqrt{1-\e^{-2u_k}}
\sqrt{1-\e^{-2u_\ell}}
}{
1-\e^{-(u_k+u_\ell)}
}
-
\frac{2\sqrt{u_ku_\ell}}{u_k+u_\ell}
\right|
\leq
C_q\sqrt{u_ku_\ell},
\qquad
k,\ell\geq K_1.
\]
Choose $K_0\geq K_1$ so large that
\(
C_q\sum_{k\geq K_0}q^{-k}<\frac{m_q}{2},
2q^{-K_0}\leq1.
\)

Since, for each fixed $M$,
\(
G_{M,L}\rightarrow K_M
\)
in operator norm as $L\to\infty$, we may choose integers $L_M$ so large
that the intervals
\(
J_M
=
\{L_M-M,\ldots,L_M+M\}
\)
are pairwise disjoint, satisfy
\(
J_M\subset\{K_0,K_0+1,\ldots\},
\)
and
\begin{equation}\label{eq:multiorbit-bad-block}
\lVert G_{M,L_M}-K_M\rVert
\leq
\frac1M.
\end{equation}

Set
\(
J=\bigcup_{M\geq1}J_M,
H_0=\ell^2(J),
\)
and let $(e_k)_{k\in J}$ be the standard orthonormal basis of $H_0$.
Define
\(
De_k=\lambda_ke_k,
b=
\sum_{k\in J}
\sqrt{1-\lambda_k^2}\,e_k.
\)
Since
\(
1-\lambda_k^2
=
1-\e^{-2q^{-k}}
\leq
2q^{-k},
\)
we have $b\in H_0$, and $D$ is a positive diagonal contraction.

We first show that the integer orbit $\{D^nb:n\in\N\}$ is a frame. Its
frame operator has matrix
\[
S_{k\ell}
=
\frac{
\sqrt{1-\e^{-2u_k}}
\sqrt{1-\e^{-2u_\ell}}
}{
1-\e^{-(u_k+u_\ell)}
},
\qquad
k,\ell\in J.
\]
Consider also
\[
R_{k\ell}
=
\frac{2\sqrt{u_ku_\ell}}{u_k+u_\ell}
=
\sech\!\left(
\frac{(k-\ell)\log q}{2}
\right).
\]
By Lemma~\ref{lem:continuous-riesz}, the corresponding operator on
$\ell^2(\mathbb Z)$ satisfies
\(
m_q\Id\leq R\leq M_q\Id.
\)
Hence its compression $R_J$ to $\ell^2(J)$ satisfies the same bounds.

By the choice of $K_0$,
\(
|S_{k\ell}-R_{k\ell}|
\leq
C_q\sqrt{u_ku_\ell}
\)
for all \(k,\ell\in J\). Therefore
\(
\lVert S-R_J\rVert
\leq
C_q\sum_{k\in J}u_k
\leq
C_q\sum_{k\geq K_0}q^{-k}
<
\frac{m_q}{2}.
\)
Hence
\begin{equation}\label{eq:single-orbit-frame-bounds}
\frac{m_q}{2}\Id
\leq
S
\leq
\left(M_q+\frac{m_q}{2}\right)\Id.
\end{equation}
Thus $\{D^nb:n\in\N\}$ is a frame for $H_0$.

We next show that $\{D^tb:t\in T\}$ is Bessel. Put
\(
L_T(s)
=
\sum_{t\in T}\e^{-st}\) for \(s>0\).
By Theorem~\ref{thm:mellin-counterexample},
\(
N_T(R)\asymp R.
\)
Hence Theorem~\ref{thm:abel-counting}, together with
\eqref{eq:F-L-relation}, gives a constant $C_T<\infty$ such that
\(
L_T(s)\leq\frac{C_T}{s}\) for \(0<s\leq1\). The $T$-sampled frame operator
$G_T^{(0)}$ has entries
\(
(G_T^{(0)})_{k\ell}
=
\sqrt{1-\e^{-2u_k}}
\sqrt{1-\e^{-2u_\ell}}
L_T(u_k+u_\ell).
\)
Consequently,
\[
|(G_T^{(0)})_{k\ell}|
\leq
2C_T
\frac{\sqrt{u_ku_\ell}}{u_k+u_\ell}
=
C_T
\sech\!\left(
\frac{(k-\ell)\log q}{2}
\right).
\]
Since
\(
\sum_{j\in\mathbb Z}
\sech\!\left(\frac{j\log q}{2}\right)
<\infty,
\)
the Schur test shows that $G_T^{(0)}$ is bounded. Hence
$\{D^tb:t\in T\}$ is Bessel.

It remains to prove that this family has no positive lower frame bound.
For each $M\in\mathbb N^+$, define a unit vector $y^{(M)}\in H_0$,
supported on $J_M$, by
\(
y_{L_M+i}^{(M)}
=
\frac{(-1)^i}{\sqrt{2M+1}}\) for \(|i|\leq M\).
Identifying $e_i$ with $e_{L_M+i}$ for $|i|\leq M$, we have
\(
u_{L_M+i}
=
q^{-(L_M+i)}
=
\frac{a_i}{q^{L_M}}.
\)
Thus the compression of $G_T^{(0)}$ to $\ell^2(J_M)$ is exactly
$G_{M,L_M}$, and $y^{(M)}$ corresponds to $x^{(M)}$. By
\eqref{eq:multiorbit-bad-block},
\[
\langle G_T^{(0)}y^{(M)},y^{(M)}\rangle
=
\langle G_{M,L_M}x^{(M)},x^{(M)}\rangle
\leq
\langle K_Mx^{(M)},x^{(M)}\rangle+\frac1M
=
\lVert\mathcal Mx^{(M)}\rVert_2^2+\frac1M
\rightarrow0
\]
by Lemma~\ref{lem:continuous-riesz}. Hence $G_T^{(0)}$ is not bounded
below. Therefore $\{D^tb:t\in T\}$ is Bessel but not a frame.

We now fix $m\in\mathbb N^+$ and pass to $m$ generating orbits. Let
\(
H^{(m)}
=
H_0\otimes\mathbb C^m,
A^{(m)}
=
D\otimes\Id_{\mathbb C^m},
\)
and let $\varepsilon_1,\ldots,\varepsilon_m$ be the standard orthonormal
basis of $\mathbb C^m$. Define
\(
f_j^{(m)}
=
b\otimes\varepsilon_j\) for \(1\leq j\leq m,\)
and
\(
\mathcal F^{(m)}
=
\bigl(f_1^{(m)},\ldots,f_m^{(m)}\bigr).
\)
The associated observation operator is
\(
C_{\mathcal F^{(m)}}:H^{(m)}\longrightarrow\mathbb C^m,
C_{\mathcal F^{(m)}}x
=
\bigl(
\langle x,f_1^{(m)}\rangle,\ldots,
\langle x,f_m^{(m)}\rangle
\bigr).
\)

If
\(
x
=
\sum_{j=1}^m x_j\otimes\varepsilon_j
\in H^{(m)},
\)
then, for every $t\geq0$,
\(
\lVert
C_{\mathcal F^{(m)}}(A^{(m)})^t x
\rVert^2
=
\sum_{j=1}^m
|\langle x_j,D^tb\rangle|^2.
\)
Consequently,
\(
\sum_{n\geq0}
\lVert
C_{\mathcal F^{(m)}}(A^{(m)})^n x
\rVert^2
=
\sum_{j=1}^m
\sum_{n\geq0}
|\langle x_j,D^nb\rangle|^2.
\)
By \eqref{eq:single-orbit-frame-bounds}, the $\N$-system for
$(A^{(m)},C_{\mathcal F^{(m)}})$ is exactly observable. Equivalently,
\(
\left\{
(A^{(m)})^n f_j^{(m)}:
n\in\N,\ 1\leq j\leq m
\right\}
\)
is a frame for $H^{(m)}$, with bounds independent of $m$.

Similarly,
\(
\sum_{t\in T}
\lVert
C_{\mathcal F^{(m)}}(A^{(m)})^t x
\rVert^2
=
\sum_{j=1}^m
\sum_{t\in T}
|\langle x_j,D^tb\rangle|^2.
\)
Hence the $T$-system is Bessel. Equivalently,
\(
\left\{
(A^{(m)})^t f_j^{(m)}:
t\in T,\ 1\leq j\leq m
\right\}
\)
is Bessel.

On the other hand, let
\(
z^{(M)}
=
y^{(M)}\otimes\varepsilon_1.
\)
Then $\lVert z^{(M)}\rVert=1$ and
\[
\sum_{t\in T}
\lVert
C_{\mathcal F^{(m)}}(A^{(m)})^t z^{(M)}
\rVert^2
=
\sum_{t\in T}
|\langle y^{(M)},D^tb\rangle|^2
=
\langle G_T^{(0)}y^{(M)},y^{(M)}\rangle
\rightarrow0.
\]
Thus the $T$-system is not exactly observable, and the corresponding
$T$-sampled orbit family is not a frame.

Finally, each eigenvalue $\lambda_k$ of $A^{(m)}$ has eigenspace
\(E_k=\operatorname{span}\{e_k\}\otimes\mathbb C^m\), with
\(\dim E_k=m\).
Suppose that $r<m$ vectors $g_1,\ldots,g_r\in H^{(m)}$ generated a
complete integer time orbit family. Let $P_k$ be the orthogonal projection
onto $E_k$. Since
\(
\dim\operatorname{span}
\{P_kg_1,\ldots,P_kg_r\}
\leq r<m,
\)
there exists a nonzero vector $v\in E_k$ orthogonal to
$P_kg_1,\ldots,P_kg_r$. Since
\(
A^{(m)}v=\lambda_kv,
\)
we have
\(
\langle v,(A^{(m)})^ng_j\rangle
=
\lambda_k^n\langle v,g_j\rangle
=
0
\)
for every $n\in\N$ and $1\leq j\leq r$. Hence no family generated by
fewer than $m$ orbits can be complete. Since
$\mathcal F^{(m)}$ contains exactly $m$ generators, $m$ is the minimal
number of generating orbits.

By the standard characterization of singly generated diagonal orbit
frames (see, for example, \cite{ChristensenEtAlMystery,
KrishtalMillerDemystifying}), the sequence
\((\lambda_k)_{k\in J}\) satisfies Carleson's interpolation condition.
Hence $\{D^nb:n\in\N\}$ is a singly generated positive diagonal
Carleson frame.
\end{proof}

\begin{remark}
\label{rem:mellin-finite-orbit}
The same time set $T$ works simultaneously for every
$m\in\mathbb N^+$. Thus the failure in
Theorem~\ref{thm:mellin-counterexample} is not caused by the infinite
dimensional observation space used in its direct sum construction. It
already occurs for a single generating orbit and, more generally, for
every prescribed finite number of generating orbits. In particular,
even in the singly generated positive diagonal Carleson setting, the
scalar condition
\(
N_T(R)\asymp R
\)
together with the Bessel property does not force the sampled orbit to
have a positive lower frame bound.

To compare directly with the conjecture in \cite{AldroubiSurvey}, set
\(
T_+=T\setminus\{0\}.
\)
Then
\(
N_{T_+}(R)\asymp R,
\)
and consequently
\(
\sum_{t\in T_+}\frac1t=\infty.
\)
Removing the time $0$ preserves the Bessel property and cannot restore a
positive lower frame bound. Hence the same construction gives a
counterexample with strictly positive sampling times satisfying the
M\"untz--Sz\'asz divergence condition.
\end{remark}

Combining Theorem~\ref{thm:universal-multiorbit} with the two mechanisms
from Section~4 gives the following consequences.

\begin{corollary}
\label{cor:multiorbit-lifting}
Let $T\subset\Rplus$ be locally finite.
If the natural density $d(T)$ exists, then
\(
\mathcal O_{\N}(A,\mathcal F)\text{ is a frame}
\Longrightarrow
\mathcal O_T(A,\mathcal F)\text{ is a frame}
\)
for every positive $A$ and every finite or countable indexed family
\(\mathcal F=(f_i)_{i\in I}\)
if and only if
\(
0\in T
\)
and
\(
0<d(T)<\infty.
\)

Moreover, if \(0\in T\) and \(T\) satisfies the fixed scale point count
condition \eqref{eq:fixed-scale-occupancy}, then every finite or countable
family of positive operator orbits that forms a frame at integer times
remains a frame after sampling at \(T\).
\end{corollary}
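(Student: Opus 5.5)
The corollary should follow by combining Theorem~\ref{thm:universal-multiorbit} with Theorems~\ref{thm:natural-density} and~\ref{thm:occupancy}. No new analysis is needed; the point is to keep the quantifiers straight. The key step is the identity \eqref{eq:orbit-observation-identity} and its $T$-version. These show that $\mathcal O_T(A,\mathcal F)$ is a frame exactly when the $T$-system for $(A,C_{\mathcal F})$ is exactly observable. Theorem~\ref{thm:universal-multiorbit} records this as the equivalence between $T\in\U$ and universal preservation of multi-orbit frames.

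For the first assertion, assume $d(T)$ exists. Suppose the multi-orbit preservation property holds for every positive $A$ and every finite or countable $\mathcal F$. By the implication (ii)$\Rightarrow$(i) of Theorem~\ref{thm:universal-multiorbit}, this gives $T\in\U$. Its proof realizes an arbitrary observation operator $C$ as $C_{\mathcal F}$ via $f_i=C^*e_i$, up to a coordinate isometry. Theorem~\ref{thm:natural-density} then yields $0\in T$ and $0<d(T)<\infty$. Conversely, if $0\in T$ and $0<d(T)<\infty$, Theorem~\ref{thm:natural-density} gives $T\in\U$. The implication (i)$\Rightarrow$(ii) of Theorem~\ref{thm:universal-multiorbit} then gives the preservation property.

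For the second assertion, assume $0\in T$ and \eqref{eq:fixed-scale-occupancy}. By the equivalence established just before Theorem~\ref{thm:occupancy}, this is the same as $0<D^-(T)\leq D^+(T)<\infty$. Theorem~\ref{thm:occupancy} then gives $T\in\U$. Applying (i)$\Rightarrow$(ii) of Theorem~\ref{thm:universal-multiorbit} yields the claim for every finite or countable family of positive operator orbits.

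The only point needing care is the direction from frame preservation back to membership in $\U$. Theorem~\ref{thm:universal-multiorbit} already handles it, and it applies because its hypothesis ranges over all positive $A$ and all families $\mathcal F$. A secondary detail concerns a family $\mathcal O_{\N}(A,\mathcal F)$ that is a frame. Its time-zero subfamily is Bessel, so $C_{\mathcal F}$ is bounded and the observability framework applies. The paper already notes this before Theorem~\ref{thm:universal-multiorbit}. I do not expect a genuine obstacle.
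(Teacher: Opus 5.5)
Your proposal is correct and matches the paper's proof, which consists of combining Theorems~\ref{thm:natural-density}, \ref{thm:occupancy}, and~\ref{thm:universal-multiorbit} in exactly the way you describe. Your extra remarks spell out details the paper leaves implicit: translating \eqref{eq:fixed-scale-occupancy} into the Beurling density hypothesis via the stated equivalence, and boundedness of $C_{\mathcal F}$ from the Bessel property of the time-zero subfamily.
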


\begin{proof}
Combine Theorems~\ref{thm:natural-density},
\ref{thm:occupancy}, and
\ref{thm:universal-multiorbit}.
\end{proof}

The commuting multi-orbit case reduces further to scalar conditions on the time set.
Here, as in Section~3, commutativity means
\(
[A,C_{\mathcal F}^*C_{\mathcal F}]=0.
\)

Under this assumption, suppose in addition that
$\mathcal O_{\N}(A,\mathcal F)$ is a frame. Then the spectral
direct-integral argument used in the proof of
Theorem~\ref{thm:commuting-universal} shows that
$\mathcal O_T(A,\mathcal F)$ is a frame if and only if
\[
0<
\essinf_{\lambda\in\sigma(A)}
(1-\lambda^2)\sum_{t\in T}\lambda^{2t}
\leq
\esssup_{\lambda\in\sigma(A)}
(1-\lambda^2)\sum_{t\in T}\lambda^{2t}
<\infty,
\]
where the essential bounds are taken with respect to the spectral
representation of $A$. Consequently, universal preservation over all
multi-orbit systems satisfying
\(
[A,C_{\mathcal F}^*C_{\mathcal F}]=0
\)
is equivalent to
\(
0\in T
\)
and
\(
N_T(R)\asymp R.
\)

\begin{remark}
\label{rem:finite-noncommuting}
Suppose that $H$ is infinite dimensional and $I$ is finite. Then
$C_{\mathcal F}^*C_{\mathcal F}$ has finite rank, so
\(
\ker(C_{\mathcal F}^*C_{\mathcal F})
\)
is nontrivial. If
\(
[A,C_{\mathcal F}^*C_{\mathcal F}]=0,
\)
this kernel is $A$-invariant. Hence, for every
\(
x\in\ker(C_{\mathcal F}^*C_{\mathcal F}),
\)
we have
\(
C_{\mathcal F}A^nx=0
\)
for all $n\geq0$, and therefore the integer orbit family
\(
\mathcal O_{\N}(A,\mathcal F)
\)
cannot be complete. Thus frames generated by finitely many vectors in infinite dimension
necessarily lie outside this commuting subclass. The commuting theory is
instead naturally suited to countably generated or finite dimensional
systems.
\end{remark}

The main conclusions of the paper can therefore be summarized by
\begin{align*}
&\U
\subsetneq
\Ucom
=
\{T:0\in T,\ N_T(R)\asymp R\},\\
&\{T:0\in T,\ N_T(R)
=
dR+o(R),\ 0<d<\infty\}
\subset\U,\\
&\{T:0\in T,\
\eqref{eq:fixed-scale-occupancy}\text{ holds}\}
\subset\U.
\end{align*}
The first relation shows that linear counting completely characterizes universal preservation in the commuting class but is
insufficient in the general case. The second and third relations give two
independent mechanisms that lift scalar bounds to the complete Stein--Pick condition. Examples~\ref{ex:occupancy-no-density} and
\ref{ex:density-unbounded-gaps} show that these two mechanisms are
incomparable and that neither is necessary for membership in $\U$, while
Example~\ref{ex:sparse-super-root} further shows that universality may
persist beyond pointwise square root perturbation control.

Thus the exact criterion is the complete radial Stein--Pick condition of Theorem~\ref{thm:complete-characterization}. A remaining
problem is to find directly verifiable conditions on $T$ that capture this complete condition without testing all finite matrix levels.

\section*{Declarations}

\noindent\textbf{Data availability.}
No data were used in this study.

\medskip
\noindent\textbf{Conflict of interest.}
The author declares that there is no conflict of interest.

\medskip
\noindent\textbf{Funding.}
The author received no funding for this work.

\end{document}